 \newtheorem{thm}{Theorem}[section]
 \newtheorem{lem}[thm]{Lemma}
 \newtheorem{assump}[thm]{Assumption}
 \newtheorem{prop}[thm]{Proposition}
 \theoremstyle{definition}
 \newtheorem{rem}[thm]{Remark}
 \newtheorem*{ack}{Acknowledgments}
 \theoremstyle{claim}
 \numberwithin{equation}{section}
\numberwithin{equation}{section}
\newcounter{rom}
\renewcommand{\therom}{(\roman{rom})}
{\end{list}}
\title{Surfaces expanding by non-concave curvature functions}
\begin{document}

\author[H. Li]{Haizhong Li}
\address{Department of Mathematical Sciences,
Tsinghua University,  Beijing 100084, P. R. China}
\email{\href{mailto:hli@math.tsinghua.edu.cn}{hli@math.tsinghua.edu.cn}}

\author[X. Wang]{Xianfeng Wang}
\address{School of Mathematical Sciences and LPMC,
Nankai University,
Tianjin 300071,  P. R. China}
\email{\href{mailto:wangxianfeng@nankai.edu.cn}{wangxianfeng@nankai.edu.cn}}

\author[Y. Wei]{Yong Wei}
\address{Mathematical Sciences Institute,
Australian National University, Canberra,
ACT 2601 Australia}
\email{\href{mailto:yong.wei@anu.edu.au}{yong.wei@anu.edu.au}}

\keywords {Surface, space form, inverse curvature flow, non-concave curvature function.}
\subjclass[2010]{Primary 53C44; Secondary 53C21, 58J35}


\begin{abstract}
In this paper, we first investigate the flow of convex surfaces in the space form $\mathbb{R}^3(\kappa)~(\kappa=0,1,-1)$  expanding by $F^{-\alpha}$, where $F$ is a smooth, symmetric, increasing and homogeneous of degree one function of the principal curvatures of the surfaces and the power $\alpha\in(0,1]$ for $\kappa=0,-1$ and $\alpha=1$ for $\kappa=1$. By deriving that the pinching ratio of the flow surface $M_t$ is no greater than that of the initial surface $M_0$, we prove the long time existence and the convergence of the flow. No concavity assumption of $F$ is required. We also show that for the flow in $\mathbb{H}^3$ with $\alpha\in (0,1)$, the limit shape may not be necessarily round after rescaling.
\end{abstract}

\maketitle

\section{Introduction}
Let $\mathbb{R}^3(\kappa)~(\kappa=0,1,-1)$ be a real space form, i.e., when $\kappa=0$, $\mathbb{R}^3(0)=\mathbb{R}^3$, when $\kappa=1$, $\mathbb{R}^3(1)=\mathbb{S}^3$, and when $\kappa=-1$, $\mathbb{R}^3(-1)=\mathbb{H}^3$.   Given a compact smooth immersion $X_0:M\rightarrow \mathbb{R}^3(\kappa)$, we consider the smooth family of immersions $X(x,t):M\times [0,T)\to\mathbb{R}^3(\kappa)$ solving the evolution equation
\begin{equation}\label{1.1}
 \left\{\begin{aligned}
 \frac{\partial}{\partial t}X(x,t)=&~F^{-\alpha}(x,t)\nu(x,t),\\
 X(\cdot,0)=&~X_0(\cdot),
  \end{aligned}\right.
 \end{equation}
 where $F(x,t)=F(\lambda_1(x,t),\lambda_2(x,t))$ is a smooth symmetric function of the principal curvatures of the surfaces and $\nu$ is the outer unit normal of $M_t=X_t(M)$. Throughout this paper, we assume that $F$ satisfies the following conditions:

\begin{assump}\label{assum-1}
Let $\Gamma_+=\{(\lambda_1,\lambda_2)\in \mathbb{R}^2: \lambda_i>0, i=1,2\}$ be the
positive quadrant in $\mathbb{R}^2$. Assume that
\begin{itemize}
  \item[(i)] $F$ is smooth, symmetric and positive on $\Gamma_+$.
  \item[(ii)] $F$ is strictly increasing in each argument, i.e., ${\partial F}/{\partial \lambda_i}>0$ on $\Gamma_+$, $\forall~i=1,2$.
  \item[(iii)] $F$ is homogeneous of degree $1$, i.e., $F(k\lambda)=kF(\lambda)$ for any $k>0$ and $\lambda=(\lambda_1,\lambda_2)\in \Gamma_+$.
  \item[(iv)] $F$ is normalized such that $F(1,1)=2$.
\end{itemize}
\end{assump}
We refer the reader to \cite{Andrews2015convexity} for examples of $F$ satisfying Assumption \ref{assum-1}.

For strictly convex surfaces in $\mathbb{R}^3(\kappa)$, \eqref{1.1} is a parabolic equation and  has a smooth solution on a maximal time interval $[0,T)~(T\leq\infty)$  for any $F$ satisfying Assumption \ref{assum-1} (cf. \cite{HA1999}). In this paper, we will first study the long time behavior of flow \eqref{1.1}.

\begin{thm}\label{thm1.1}
For any smooth, closed strictly convex surface $M_0$ in $\mathbb{R}^3$,  there  exists a smooth
solution of flow \eqref{1.1} with $F$ satisfying  Assumption \ref{assum-1} and $\alpha\in(0,1]$. The solution exists for all time $t\in[0,\infty)$, $M_t$ converges to infinity as $t\to \infty$ and the properly rescaled surfaces converge exponentially in $C^{\infty}$-topology to the unit sphere $\mathbb{S}^2$.
\end{thm}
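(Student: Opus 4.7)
The plan is to follow the three-stage pattern for fully nonlinear curvature flows---short-time theory, \emph{a priori} estimates, long-time convergence---in which the pinching estimate announced in the abstract takes the place of the usual concavity hypothesis on $F$.

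Short-time existence and preservation of strict convexity are standard consequences of Assumption \ref{assum-1}, since $F^{-\alpha}$ is smooth and strictly decreasing in each principal curvature on $\Gamma_+$. For $C^0$ control I would compare with concentric spheres: a sphere of radius $r$ evolves by $\dot r = (r/2)^{\alpha}$, so a maximum-principle sandwich of $M_t$ between an inner and outer sphere yields $|X|\sim e^{t/2}$ for $\alpha=1$ and $|X|\sim t^{1/(1-\alpha)}$ for $\alpha<1$, with matching inner and outer radii up to a multiplicative constant.

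The heart of the argument is the pinching estimate: the quantity $G=(\lambda_2-\lambda_1)^2/F^2$, equivalent to bounding $\lambda_2/\lambda_1$ on $\Gamma_+$, is non-increasing along the flow. To prove this one computes $\partial_t G$ from the standard evolution formula for the Weingarten operator under $\partial_t X = F^{-\alpha}\nu$ and applies the maximum principle at an interior spatial maximum of $G$, where the first-order terms collapse. What remains is a reaction term built from $\dot F$ and $\ddot F$; since $F$ is not assumed concave, $\ddot F$ has no fixed sign, and the delicate point is to extract a good sign by using (a) Euler's identity $\dot F^{ij}\lambda_j = F$ from degree-one homogeneity, (b) the symmetry of $F$ in two variables (which in two dimensions forces the mixed second derivatives into a tractable form via Codazzi), and (c) the constraint $\alpha\le 1$ to control the power factor coming from the $-\alpha$ exponent. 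This is the step I expect to be the main obstacle.

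Once pinching is preserved, combining it with the $C^0$ bound yields $c\le\lambda_i\le C$ on finite time intervals and hence uniform parabolicity of \eqref{1.1}. Because $F^{-\alpha}$ is not assumed concave, I would avoid Evans--Krylov and instead use the two-dimensional regularity result of Andrews: for flows of surfaces the speed, viewed as a function of a pinched $2\times 2$ symmetric matrix, admits H\"older estimates on its second derivative from uniform ellipticity alone. Parabolic Schauder theory then bootstraps to $C^\infty$, and the standard continuation principle yields all-time existence. For the convergence statement one rescales ($\tilde X = e^{-t/2}X$ for $\alpha=1$, polynomially for $\alpha<1$) so the rescaled surfaces have uniformly bounded geometry, and then revisits the evolution of $G$ on the rescaled flow to find a strictly negative gradient-square contribution that vanishes only at umbilic points; a Stampacchia iteration then gives exponential decay of $G$, which combined with interpolation upgrades to exponential $C^\infty$ convergence to $\mathbb{S}^2$.
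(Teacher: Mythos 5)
Your overall architecture matches the paper's: sphere barriers for the $C^0$ estimate, a maximum-principle pinching estimate on a degree-zero symmetric quantity in place of concavity, Andrews' two-dimensional second-derivative H\"older estimates instead of Krylov, Schauder bootstrap and continuation for long-time existence, and exponential decay of the pinching quantity on the rescaled flow. But there are concrete gaps in how you propose to execute two of the key steps. First, the paper works with $G=(\lambda_1-\lambda_2)^2/(\lambda_1+\lambda_2)^2$, not $(\lambda_2-\lambda_1)^2/F^2$, and this is not cosmetic. The whole mechanism is that at a critical point of $G$ the identity $-\tfrac{\partial G/\partial\lambda_2}{\partial G/\partial\lambda_1}=\lambda_1/\lambda_2$ turns $\nabla G=0$ into the clean relations $\nabla_1h_{11}=(\lambda_1/\lambda_2)\nabla_1h_{22}$ and $\nabla_2h_{22}=(\lambda_2/\lambda_1)\nabla_2h_{11}$; substituting these together with Codazzi reduces the terms quadratic in $\nabla h$ --- which is where \emph{all} the $\ddot F$ contributions live, not in a zeroth-order ``reaction term'' --- to the manifestly non-positive expression \eqref{Q1} when $\alpha\le 1$. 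The genuine reaction term $Q_0$ contains no second derivatives of $F$ at all and is non-positive directly from homogeneity. With $F^2$ in the denominator, $\ddot G$ itself carries $\ddot F$, the critical-point relations involve $\partial F/\partial\lambda_i$, and it is not clear the cancellation closes; you would have to redo the computation from scratch.

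Second, ``the rescaled surfaces have uniformly bounded geometry'' does not follow from the sphere sandwich plus pinching: pinching converts two-sided bounds on $F$ into two-sided bounds on the $\lambda_i$, but you still need $0<c_3\le\rho(t)F\le c_4$ uniformly in time. The lower bound is Gerhardt's; the upper bound requires a separate barrier argument in which the pinching estimate enters through $\dot F^{ij}h_i^ph_{pj}\ge F^2/(2\beta)$ (the paper uses $\phi=\Phi+\mu\rho^\alpha$ for $\alpha<1$ and $\phi=\Phi+\mu\chi$ for $\alpha=1$). Relatedly, for $\alpha=1$ the inner and outer barrier spheres both grow like $e^{t/2}$ but with different multiplicative constants, so the sandwich alone does not identify a single limit sphere; one needs the Alexandrov reflection argument of Chow--Gulliver to get $u\,\rho^{-1}\to1$. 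Finally, the exponential decay of $G$ on the rescaled flow comes directly from the zeroth-order term, which satisfies $Q_0\le-\delta\,G$ once the rescaled curvatures and speed are uniformly pinched; no Stampacchia iteration is needed, and the gradient terms (which are merely non-positive) are not the source of the decay.
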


\begin{thm}\label{thm1.2}
For any smooth, closed strictly convex surface $M_0$ in $\mathbb{H}^3$, there  exists a smooth
solution of flow \eqref{1.1} with $F$ satisfying  Assumption \ref{assum-1} and $\alpha\in(0,1]$. The solution exists for all time $t\in[0,\infty)$, and each surface $M_t$ can be written as a graph of a function $u(t,\theta)$ over $\mathbb{S}^2$. The principal curvatures $\lambda_i(i=1,2)$ of $M_t$ satisfy the following decay estimate
\begin{equation}
  |\lambda_i-1|\leq Ce^{-2^{(1-\alpha)}\cdot t},~~ i=1,2, ~~ \text{as }t\to\infty,
\end{equation}
where $C=C(\alpha,M_0)$ is a positive constant depending only on $\alpha$ and $M_0$. Moreover, the defining function $u$ of $M_t$ satisfies the following asymptotic behavior
\begin{equation}\label{thm1.2-f}
  u(t,\theta)=\frac t{2^{\alpha}}+f(\theta)+o(1),\quad \text{as}\quad t\to\infty,
\end{equation}
where $f(\theta)$ is a smooth function on $\mathbb{S}^2$.
\end{thm}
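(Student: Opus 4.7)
My plan proceeds in three stages: first, long-time existence together with a graph representation and the sharp growth rate of $u$; second, the exponential decay of the principal curvatures; third, the asymptotic expansion of $u$.

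For the first stage, the pinching preservation that underlies Theorem~\ref{thm1.1} is proved uniformly for $\kappa\in\{0,-1\}$, so it applies in $\mathbb{H}^3$; combined with upper and lower bounds on $F$ from geodesic-sphere barriers and standard parabolic regularity, it yields long-time existence and preservation of strict convexity, so no finite-time singularity can form. Convexity lets us write $M_t$ as a polar graph $u(t,\theta)$ over $\mathbb{S}^2$ based at a fixed interior point $p_0$ of the convex body enclosed by $M_0$. Two geodesic spheres centred at $p_0$, one inside and one outside $M_0$, evolve under the scalar ODE $\dot r = (2\coth r)^{-\alpha}$, and comparison bounds $u(t,\cdot) - t/2^\alpha$ uniformly in $t$; in particular $u\to\infty$ at linear speed $2^{-\alpha}$ and $u_{\max}(t)-u_{\min}(t)$ stays bounded.

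For the second stage I exploit this ODE. Since $\coth r = 1 + 2e^{-2r} + O(e^{-4r})$ one has $\dot r - 2^{-\alpha} \sim -2^{1-\alpha}\alpha\, e^{-2r}$, so each barrier sphere satisfies $r(t) = t/2^\alpha + c_0 + O(e^{-2^{1-\alpha}t})$ with curvature $\coth r - 1 = O(e^{-2^{1-\alpha}t})$. To upgrade this rate to the flow surface, I would apply the parabolic maximum principle to a quantity comparing $F$ to its spherical value, such as $G := F - 2\coth u$. A direct computation using the graph equation in $\mathbb{H}^3$ should yield $\partial_t G = \mathcal{L}G + B\cdot G$ with $\mathcal{L}$ the natural linearised operator and $B$ bounded by pinching; pinching also absorbs the first-order terms coming from the non-concavity of $F$. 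The maximum principle then gives $|F - 2\coth u|\le Ce^{-2^{1-\alpha}t}$, and since $\coth u - 1 = O(e^{-2u})\le Ce^{-2^{1-\alpha}t}$, $|F-2|$ decays at the same rate. Pinching, which controls $(\lambda_1-\lambda_2)^2$ in terms of $F^2 - 4\lambda_1\lambda_2$, then promotes this to the claimed $|\lambda_i-1|\le Ce^{-2^{1-\alpha}t}$.

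For the third stage, once $|F-2|$ and the spatial derivatives of $u$ decay exponentially (the latter obtained by parabolic Schauder bootstraps fed by pinching), the graph flow $\partial_t u = F^{-\alpha}\sqrt{1 + |\nabla u|^2/\sinh^2 u}$ reduces to $\partial_t u = 2^{-\alpha} + O(e^{-2^{1-\alpha}t})$ in every $C^k$ norm. Integrating in $t$ produces the limit $f(\theta) := \lim_{t\to\infty}(u(t,\theta) - t/2^\alpha)\in C^\infty(\mathbb{S}^2)$ with the stated $o(1)$ remainder. The main obstacle throughout is the absence of any concavity assumption on $F$: Evans--Krylov-type estimates are unavailable, so every $C^2$ bound must be extracted from pinching alone. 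The most delicate point is hitting the sharp exponent $2^{1-\alpha}$ --- any weaker rate would destroy the expansion \eqref{thm1.2-f} --- which forces the auxiliary function in Stage 2 to be chosen so that its linearisation around the sphere sees exactly that decay.
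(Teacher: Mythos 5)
Your Stages 1 and 3 track the paper's argument: long-time existence via the pinching estimate of Section \ref{sec:3}, the $C^0$/$C^1$ barriers and bounds on $F$ (with the concavity-based inequality $\dot F^{ij}g_{ij}\ge 2$ replaced by the pinching-based $\dot F^{ij}g_{ij}\ge 2/\beta$), Andrews' two-dimensional H\"older estimates in place of Krylov, and integration of the graph equation once the curvature decay is known. The genuine gap is in Stage 2. You propose to obtain the sharp rate in one stroke by applying the maximum principle to $G:=F-2\coth u$ and asserting $\partial_t G=\mathcal{L}G+B\cdot G$ with $B$ controlled "by pinching." Two things fail here. First, the zeroth-order coefficient in the evolution of $F$ is $-F^{-\alpha}\bigl(\dot F^{ij}h_{ip}h^p_j-\dot F^{ij}g_{ij}\bigr)$, which linearises to $-2^{1-\alpha}(F-2)$ only \emph{after} one knows $\lambda_i\to 1$; a priori the pinching ratio only gives $\lambda_2\le\beta\lambda_1$ with $\beta$ the initial pinching, and the coefficient need not have the favourable size (or even sign structure) you need. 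So the argument is circular: the sharp-rate maximum principle presupposes the qualitative convergence it is meant to prove. Second, the gradient terms in the evolution of $F$ (coming from $\ddot F$, which is unsigned since $F$ is not concave) do not vanish at a critical point of $F-2\coth u$, and "pinching absorbs them" is not a proof.

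The paper's route through this is the real content of its Section 5 and is absent from your proposal: it first proves the \emph{qualitative} statement $F\to 2$ (Lemma \ref{lem-cur1}) by running the maximum principle on the two functions $\omega=\log(-\Phi)-\log\chi+u+(\alpha-1)\log 2$ and $\phi=-\log(-\Phi)-\log\chi+u-(\alpha+1)\log 2$, where $\chi$ is the support function. These are engineered so that at a spatial maximum the first-order terms either vanish ($\omega_i=0$) or decay, thanks to the gradient estimate $v-1\le ce^{-\lambda t}$ and $\chi^{-1}|\nabla\chi|\le ce^{-\lambda t}$, and the conclusion is extracted not from a linear decay inequality but from a "differential inequality on the set where $\tilde\omega>\epsilon$" argument (the claim \eqref{claim-H}). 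Only with $F\to2$ and the exponential decay of the pinching ratio (Lemma \ref{lem-pinch-H}, which uses the extra hyperbolic term $\kappa(1-\alpha)\Phi\dot G^{ij}g_{ij}$ in the evolution of the pinching quantity) in hand does one run the linearised argument to get the optimal rate $|h_i^j-\delta_i^j|\le ce^{-2^{(1-\alpha)}t}$, following Scheuer. To repair your proposal you would need to insert this intermediate qualitative step, or else supply an actual computation showing that your $G$ satisfies a usable differential inequality with only the initial pinching constant as input; as written, that computation is the missing heart of the proof.
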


\begin{thm}\label{thm1.3}
For any smooth, closed strictly convex surface $M_0$ in $\mathbb{S}^3$,  there  exists a smooth
solution of flow \eqref{1.1} with $F$ satisfying  Assumption \ref{assum-1} and $\alpha=1$. The solution exists for finite time $t\leq T$ with $T<\infty$, $M_t$ expands to the equator as $t\to T$ and the properly rescaled surfaces converge exponentially in $C^{\infty}$-topology to the unit sphere $\mathbb{S}^2$.
\end{thm}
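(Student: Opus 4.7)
The strategy is to adapt the framework developed for Theorems~\ref{thm1.1} and~\ref{thm1.2} to the spherical setting, combining the pinching estimate from earlier in the paper with the compactness of $\mathbb{S}^3$.

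First I invoke the pinching estimate (valid for $\kappa=1$ and $\alpha=1$ under Assumption~\ref{assum-1}), which yields $\lambda_{\max}(\cdot,t)\le C_0\,\lambda_{\min}(\cdot,t)$ along the flow with $C_0$ depending only on $M_0$. Hence $M_t$ remains strictly convex, so it is contained in an open geodesic hemisphere; fixing a point $p_0$ in the convex body bounded by $M_0$, I parametrize $M_t$ as a radial graph $X(\theta,t)=\exp_{p_0}(\rho(\theta,t)\,\omega(\theta))$ over $\mathbb{S}^2$ with $\rho\in(0,\pi/2)$, so that~\eqref{1.1} reduces to a scalar uniformly parabolic equation for $\rho$.

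Next I prove finite-time existence. Gauss--Bonnet together with the intrinsic formula $K=\lambda_1\lambda_2+1\ge 1$ for convex surfaces in $\mathbb{S}^3$ yields $|M_t|\le 4\pi$, while
\[
\frac{d}{dt}|M_t|=\int_{M_t}\frac{H}{F}\,d\mu\;\ge\;c_1\,|M_t|,
\]
since $H/F$ is scale-invariant, continuous and positive on the closed pinched sector, hence bounded below by some $c_1>0$. Thus $|M_t|\ge|M_0|e^{c_1 t}$, forcing $T\le c_1^{-1}\log(4\pi/|M_0|)<\infty$. The flow must develop a genuine singularity at $t=T$; since pinching together with standard regularity would allow extension past any smooth convex limit sitting inside the hemisphere, $M_t$ must exhaust the hemisphere and the enclosed volume $V(\Omega_t)$ increases monotonically to $\pi^2$. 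Combining this with the pinching and the fact that the only totally geodesic surface in $\mathbb{S}^3$ is the equator, one obtains $\lambda_i\to 0$ uniformly and Hausdorff convergence $M_t\to$ equator.

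Finally I rescale. Setting $\psi:=\pi/2-\rho$, an explicit computation on concentric geodesic spheres gives $\psi\sim\sqrt{T-t}$, motivating the change of variables $\tau:=-\tfrac12\log(T-t)$ and $\tilde\psi:=e^{\tau}\psi$. The rescaled function then satisfies a uniformly parabolic equation on $\mathbb{S}^2\times[\tau_0,\infty)$ whose coefficients are controlled by the pinching, so Krylov--Safonov and Schauder estimates furnish uniform $C^{k,\alpha}$ bounds for every $k$. Any subsequential limit $\tilde\psi_\infty$ is a stationary solution inheriting the pinching; the only umbilic profile is a constant, and a linearization plus spectral-gap argument around this constant upgrades the convergence to the exponential $C^\infty$ rate stated in the theorem. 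The main obstacle is precisely this last step: without the concavity of $F$, the classical Andrews-type maximum principle applied to $|\mathring h|^2/F^2$ is unavailable, so the identification of the rescaled limit must be carried out on the rescaled equation itself, using the rigidity of umbilic stationary solutions together with the preserved pinching to exclude non-round limits. This is the spherical analogue of the rigidity step in Theorems~\ref{thm1.1}--\ref{thm1.2} and is the technical heart of the argument.
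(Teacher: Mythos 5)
Your pinching step and your finite-time argument are fine (the Gauss--Bonnet bound $|M_t|\le 4\pi$ against the exponential growth $\frac{d}{dt}|M_t|=\int_{M_t}H F^{-1}\,d\mu\ge \beta^{-1}|M_t|$ is a clean alternative to the paper's argument, which instead applies the maximum principle to $\partial_t F^{-1}-\dot\Phi\dot F^{ij}(F^{-1})_{ij}\ge \frac{2}{\beta}F^{-3}$ to force blow-up of $F^{-1}$). The gap is in the rescaled analysis. You assert that the equation for $\tilde\psi=e^{\tau}(\pi/2-\rho)$ is uniformly parabolic "with coefficients controlled by the pinching," but the pinching ratio estimate of Proposition \ref{prop3.2} controls only $\lambda_2/\lambda_1$, not the magnitude of the rescaled curvatures. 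Uniform parabolicity (and the $C^2$ bound on $\tilde\psi$ that must precede any Krylov--Safonov/Schauder or Andrews-type bootstrap) requires two-sided bounds of the form $0<C^{-1}\le \lambda_i\Theta^{-1}\le C$ with $\Theta\sim\sqrt{T-t}$, i.e. a quantitative rate at which the curvature degenerates as $M_t$ approaches the totally geodesic equator. Nothing in your argument produces this: the unrescaled speed bound only gives $\max F$ nonincreasing, and your qualitative "exhaustion of the hemisphere" step gives no rate. This is exactly the point where the paper switches to Gerhardt's dual (polar) flow: the polar hypersurfaces $\tilde M_t$ solve the \emph{contracting} flow with speed $\tilde F$, McCoy's theorem gives $C^{-1}\le\tilde\lambda_i\Theta\le C$ and full regularity of the rescaled contracting solution, and the duality $\tilde\lambda_i=\lambda_i^{-1}$, $u_{\max}=\pi/2-u^{*}_{\min}$ transports these bounds back to the expanding flow.

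The same omission undermines your final step. You correctly flag that, absent concavity of $F$, identifying the round limit is the technical heart, but "rigidity of umbilic stationary solutions plus a spectral gap" is a statement of what needs to be proved, not a proof: a priori the rescaled flow need not subconverge to a \emph{stationary} profile, and excluding non-round limits is precisely where work is required. The paper resolves this on the dual contracting flow by a blow-up argument: rescaling near the singular time produces a limit solution in $\mathbb{R}^3$, and the strong maximum principle applied to $G=(\lambda_1-\lambda_2)^2/(\lambda_1+\lambda_2)^2$ — whose gradient terms have a sign at a critical point by the two-dimensional computation of \S\ref{sec:3} — forces the limit to be shrinking spheres, whence $\max_{M_t}G\to 0$. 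Your proposal would need to supply an analogous mechanism directly on the expanding flow (note that $\frac{d}{dt}\max G\le -\delta\max G$ on a finite interval $[0,T)$ does not by itself send $G$ to zero), or else adopt the dual-flow route.
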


The expanding curvature flow for convex hypersurfaces driven by powers of a symmetric, increasing, homogeneous of degree one function of the principal curvatures has been studied by many authors. For $\alpha\in (0,1]$, Urbas \cite{Urbas91JDG} proved that the flow \eqref{1.1} in $\mathbb{R}^{n+1}$ exists for all time and converges to a round sphere after suitable rescaling, provided that either (i) the speed function $F$ is inverse concave and its dual function $F_*$ vanishes on the boundary of $\Gamma_+$, or (ii) $F$ is concave and inverse concave. For power $\alpha>1$, Gerhardt \cite{Gerhardt2014} proved the convergence of the flow \eqref{1.1} in $\mathbb{R}^{n+1}$ if $F$ is concave and $F$ vanishes on the boundary of $\Gamma_+$ (see earlier results by Schn\"{u}rer \cite{Schnurer2006} and Li \cite{Q-Li2010}); Kr\"{o}ner and Scheuer \cite{Kron-Sche2017} considered the case that $F$ is concave and initial hypersurface $M_0$ satisfies a suitable curvature pinching.  If the speed function $F$ is concave and $F$ vanishes on the boundary of $\Gamma_+$, Urbas \cite{urbas1990expansion} ($\alpha=1$) and Gerhardt \cite{Gerhardt1990,Gerhardt2014} ($\alpha\in (0,1]$) proved the long time existence and convergence of the flow \eqref{1.1} in $\mathbb{R}^{n+1}$, provided that the initial hypersurface is star-shaped (not necessarily convex) and admissible. The hyperbolic version was studied by Gerhardt \cite{gerhardt2011-H^n},  Scheuer \cite{Scheuer2015-1,scheuer2015-gradient} and the third author \cite{Wei-17}. Flow \eqref{1.1} for convex hypersurfaces in sphere was considered by Gerhardt \cite{Gerhardt2015}, Makowski-Scheuer \cite{makowski2013rigidity} and also by the third author \cite{Wei-17}. Inverse curvature flows have  been also studied in other Riemannian manifolds: in \cite{Sc2017,Sc2017-2,Zh2018} the ambient manifolds belong to a class of warped
products that includes the space forms, while in \cite{Pi1,Pi2} the ambient manifolds are non-compact rank one symmetric spaces and a different notion of mass is used to discuss the roundness of the limit of the rescaled metric.

In the previously mentioned papers, the concavity or the inverse concavity of $F$ plays an important role in deriving the curvature estimate and in applying the  second derivative H\"{o}lder estimates of Krylov \cite{Krylov}. Our Theorems \ref{thm1.1} -- \ref{thm1.3} say that the condition on the second derivatives of the speed function is not necessary  in the two-dimensional case. The proof is inspired by Andrews' work \cite{Andrews2010} on contracting curvature flow in $\mathbb{R}^3$. One ingredient of the proof is the pinching ratio estimate
\begin{equation*}
  \lambda_2~\leq~ C \lambda_1
\end{equation*}
along the flow \eqref{1.1} without any concavity assumption on the speed function $F$. This follows from applying the maximum principle to the evolution equation of
\begin{equation}\label{G-def}
  G=\frac{(\lambda_1-\lambda_2)^2}{(\lambda_1+\lambda_2)^2},
\end{equation}
which yields that the supremum of $G$ over $M_t$ is monotone decreasing in time along the flow \eqref{1.1} in $\mathbb{R}^3$ and $\mathbb{H}^3$ with $\alpha\in (0,1]$, and in $\mathbb{S}^3$ with $\alpha=1$. The reason that we can do this without condition on the second derivative of $F$ is that we can write down the gradient term completely in two-dimensional case and show that it has a favourable sign at the critical point of $G$ by using the condition $\nabla G=0$ at the critical point.

The curvature pinching estimate together with the bound on the speed $F$ will imply the bound on the principal curvatures as well as the preserving of the convexity of the evolving surfaces $M_t$. Since the speed function $F$ does not satisfy any second derivative condition, we use the second derivative H\"{o}lder estimate derived by Andrews \cite{andrews2004fully} in two-dimensional case and the standard parabolic Schauder estimate to derive the higher regularity of the flow. Then the long time existence of the flow \eqref{1.1} follows.

To prove the convergence of the flow, we need some extra work. In the Euclidean case, by using the curvature pinching estimate, we will refine the upper bound on $F$ and show that the properly rescaled $F$ is uniformly bounded from above. This together with the lower bound on the rescaled $F$ (obtained by Gerhardt \cite{Gerhardt2014}) implies the uniform two sides positive bounds on the rescaled principal curvatures. The Alexandrov reflection argument implies that the rescaled surfaces converge to a round sphere continuously. The uniform estimates on the rescaled principal curvatures and Andrews' \cite{andrews2004fully} second derivative H\"{o}lder estimate and the Schauder estimate can be applied to derive the smooth convergence of the rescaled flow. Finally, we can prove the exponential convergence of the rescaled flow. The method is to consider the quantity $G$ on the rescaled flow and show that the supremum of $G$ satisfies an exponential decay. The exponential decay of the curvature and the embedding of the flow surfaces then follow from a similar argument to that in \cite{Andrews1994-2}. Note that this exponential convergence of the flow \eqref{1.1} in Euclidean space was not considered in Gerhardt's paper \cite{Gerhardt2014}.

For the convergence of the flow in the hyperbolic space, the evolution equation of the quantity $G$ together with the contribution from the negative curvature of the hyperbolic case yields that the pinching ratio not only is controlled by its initial value but also decays to one exponentially. Thus in order to show that both principal curvatures $\lambda_1,\lambda_2$ converge to one as time goes to infinity, it suffices to show that the speed function $F$ converges to $2$ as time goes to infinity. For this, we adapt an argument used by Scheuer \cite{Scheuer2015-1}. Our pinching estimate will also be used in the proof. The remaining proof of Theorem \ref{thm1.2} follows from the similar argument as in \cite{Scheuer2015-1}.

Finally, for the flow \eqref{1.1} in the sphere, applying the argument in \cite{makowski2013rigidity} together with the pinching estimate and the regularity estimate of Andrews \cite{andrews2004fully}, we can first prove that the evolving surface $M_t$ expands to the equator in $C^{1,\gamma}$, $0<\gamma<1$ as $t\to T<\infty$. To show the smooth convergence we employ the dual flow method introduced by Gerhardt \cite{Gerhardt2015}. McCoy \cite{McCoy2017} proved that the dual contracting curvature flow of \eqref{1.1} contracts convex surface to a point in finite time and properly rescaled surfaces converge to a unit sphere smoothly. This could be used to show that the rescaled solution of the inverse curvature flow \eqref{1.1} converges to unit sphere continuously. By deriving the higher regularity estimate, we can eventually show that the rescaled solution converges to the unit sphere smoothly.

In Theorem \ref{thm1.2}, the convergence of the flow states that the principal curvatures converge to $1$ exponentially and the shifted defining function of the flow surface converges to a smooth function $f(\theta)$ over the sphere $S^2$ as time goes to infinity. In general, the function $f(\theta)$ may not be a constant function nor a first eigenfunction of the Laplacian operator on $S^2$. This means that the conformal metric $e^{2f}g_{S^2}$ may not be a round metric on the sphere. This property for the inverse curvature flow in hyperbolic space cannot be improved as we prove in the following theorem. \begin{thm}\label{thm-ex-H}
There exists a smooth star-shaped and mean convex surface $M_0$ in $\mathbb{H}^3$ such that along flow \eqref{1.1} with $\alpha\in(0,1), F=H$, as $t\to\infty$, the rescaled metric $e^{-2^{(1-\alpha)}\cdot t}g_t$ converges to a metric on $\mathbb{S}^2$ which is not the round metric, where $g_t$ is the induced metric on $M_t$.
\end{thm}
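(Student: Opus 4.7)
The plan is to exhibit an explicit example: take $M_0$ to be a geodesic sphere in $\mathbb{H}^3$ of sufficiently large radius $r_0$ centered at a point $P$ distinct from the coordinate origin $O$. As long as the hyperbolic distance $d=d_{\mathbb{H}^3}(O,P)$ satisfies $d<r_0$, the surface $M_0$ contains $O$ in its interior and is smooth, strictly convex (hence star-shaped with respect to $O$ and mean convex), with constant mean curvature $H=2\coth r_0>0$. The first step is to observe that \eqref{1.1} is equivariant under the hyperbolic isometry group, so the rotational symmetry of $M_0$ about $P$ is preserved. Hence $M_t$ is a geodesic sphere of radius $r(t)$ centered at $P$ for every $t\ge 0$, and $r(t)$ satisfies the ODE $\dot r=(2\coth r)^{-\alpha}$. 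Expanding $\coth r=1+O(e^{-2r})$ integrates to $r(t)=t/2^{\alpha}+C+O(e^{-2^{1-\alpha}t})$ for a constant $C=C(M_0)$.

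The second step is to compute the induced metric on $M_t$ pulled back to $\mathbb{S}^2$ via the radial parametrization from $O$. In polar coordinates centered at $P$, $M_t$ has intrinsic metric $\sinh^2 r(t)\cdot g_{\mathbb{S}^2}^{P}$. Pulling back to $\mathbb{S}^2\subset T_O\mathbb{H}^3$ through the graphical parametrization yields
\begin{equation*}
 g_t=\sinh^2 r(t)\,\Phi_t^{*}g_{\mathbb{S}^2},
\end{equation*}
where $\Phi_t:\mathbb{S}^2\to\mathbb{S}^2$ sends the direction from $O$ of a point of $M_t$ to its direction from $P$. Let $\phi(\theta)$ denote the angle between $\theta$ and the direction from $O$ to $P$. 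The hyperbolic law of cosines applied to the triangle $OPQ$ with $Q\in M_t$ gives
\begin{equation*}
 \cosh r(t)=\cosh d\,\cosh u-\sinh d\,\sinh u\,\cos\phi(\theta),
\end{equation*}
from which one derives the uniform asymptotic
\begin{equation*}
 u(t,\theta)=\tfrac{t}{2^{\alpha}}+C-\log\!\bigl[\cosh d\,(1-\tanh d\,\cos\phi(\theta))\bigr]+o(1),
\end{equation*}
consistent with the form promised by Theorem \ref{thm1.2} and showing that $\Phi_t$ converges smoothly to the boundary Möbius map $\Phi_\infty:\partial\mathbb{H}^3\to\partial\mathbb{H}^3$ that identifies a boundary point's tangent direction at $O$ with its tangent direction at $P$.

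The third step is to pass to the rescaled limit. Writing $\Phi_\infty^{*}g_{\mathbb{S}^2}=\sigma(\theta)^2 g_{\mathbb{S}^2}$ for the smooth conformal factor and using $\sinh^2 r(t)=\tfrac14 e^{2C}e^{2^{1-\alpha}t}(1+o(1))$, we obtain
\begin{equation*}
 e^{-2^{(1-\alpha)}t}g_t\longrightarrow \frac{e^{2C}}{4}\,\sigma(\theta)^2\,g_{\mathbb{S}^2}.
\end{equation*}
Because $O\neq P$, the Möbius transformation $\Phi_\infty$ is not an isometry of $(\mathbb{S}^2,g_{\mathbb{S}^2})$: a constant conformal factor $\sigma\equiv c$ would force $c=1$ by volume comparison, but $\Phi_\infty$ is a non-trivial dilation-like map (explicit in stereographic coordinates centered at the boundary endpoint of the line $OP$). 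Hence $\sigma$ is non-constant and the limit metric is not a constant multiple of $g_{\mathbb{S}^2}$, i.e., not the round metric.

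The main obstacle I expect is a technical one in Step 2: upgrading the pointwise asymptotic of $u(t,\theta)$ to smooth convergence of the metric tensors $e^{-2^{(1-\alpha)}t}g_t$ on $\mathbb{S}^2$. This should follow from the fact that on this rotationally symmetric configuration every geometric quantity is explicit in terms of $r(t)$, $d$, and $\phi(\theta)$, and the error terms in the expansion of the hyperbolic law of cosines decay exponentially and smoothly in $\theta$; alternatively, one can simply invoke the $C^{\infty}$ regularity of the asymptotic defining function $f(\theta)$ guaranteed by Theorem \ref{thm1.2} and verify that in this example $f$ is explicitly non-constant.
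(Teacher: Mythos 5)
Your proposed example does not work, and the failure is exactly the subtlety that this theorem is about. An off-center geodesic sphere evolves, as you correctly note, through geodesic spheres of radius $r(t)$ centered at $P$; but a geodesic sphere is totally umbilic, so its induced metric $g_t=\sinh^2 r(t)\,g_{\mathbb{S}^2}^{P}$ has constant Gauss curvature, and so does any positive constant multiple of it. ``Round'' is an intrinsic (diffeomorphism-invariant) property; pulling the metric back to $\mathbb{S}^2$ through the radial parametrization from $O$ merely rewrites the same round metric in different coordinates. Concretely, $\Phi_t^{*}g_{\mathbb{S}^2}$ is isometric to $g_{\mathbb{S}^2}$ for every diffeomorphism $\Phi_t$, so your limit $\frac{e^{2C}}{4}\sigma(\theta)^2 g_{\mathbb{S}^2}=\frac{e^{2C}}{4}\Phi_\infty^{*}g_{\mathbb{S}^2}$ \emph{is} the round metric. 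The step ``$\sigma$ is non-constant, hence the limit is not round'' is the error: a conformal metric $e^{2f}g_{\mathbb{S}^2}$ on $\mathbb{S}^2$ is round precisely when $e^{-f}$ is a linear combination of constants and first spherical harmonics, and the conformal factors of Möbius transformations of $\mathbb{S}^2$ are exactly of this form. Equivalently, in terms of the Hung--Wang quantity $Q(M)=-|M|\int_M|\mathring{A}|^2 d\mu$ used in the paper (Proposition \ref{prop-hung-wang}), your surfaces satisfy $\mathring{A}\equiv 0$, hence $Q(M_t)\equiv 0$, which certifies that the rescaled limit is round.

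The genuine difficulty, which your proposal does not engage, is to produce an initial surface for which the limit is \emph{not} round and to control this through the nonlinear, non-scale-invariant flow. The paper does this by choosing $\bar f$ on $\mathbb{S}^2$ with $\int e^{2\bar f}\int|\mathring{D}^2 e^{-\bar f}|^2=c_0>0$ (i.e., $e^{-\bar f}$ is \emph{not} a combination of constants and first eigenfunctions — this is precisely what excludes the Möbius/off-center-sphere case), taking $M_0=\tilde M_s=\mathrm{graph}(s+\bar f)$ for $s$ large so that $Q(\tilde M_s)\approx -c_0$, and then proving the $s$-independent decay estimates of Proposition \ref{lem-decay} for $|H-2|$, $|\mathring{A}|$ and $|\nabla A|$ along the flow. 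These estimates show $\frac{d}{dt}Q(M_t^s)$ is integrably small, so $\lim_{t\to\infty}Q(M_t^s)\le -c_0/4<0$, and Proposition \ref{prop-hung-wang} then yields non-roundness of the limit. Your proposal would need to be replaced by (or supplemented with) an argument of this kind; there is no shortcut via symmetric initial data, since any initial surface whose limit data $f$ has $\mathring{D}^2e^{-f}=0$ produces a round limit.
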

An analogous counterexample for the inverse mean curvature flow (i.e., flow \eqref{1.1} with $\alpha=1, F=H$) in $\mathbb{H}^3$ was constructed by Hung-Wang \cite{Hung-Wang2015inverse}. The proof of our Theorem \ref{thm-ex-H} is inspired by their argument. Consider the following quantity for any smooth surface $M$ in $\mathbb{H}^3$
\begin{equation}\label{def-m}
  Q(M)=-|M|\int_{M}|\mathring{A}|^2d\mu,
\end{equation}
where $\mathring{A}$ is the trace-less part of the second fundamental form of $M$. Let $\tilde{M}_s$ be a family of surfaces in $\mathbb{H}^3$ that are radial graphs of the functions $u(s,\theta)=cs+f(\theta)+o(1)$ over the sphere $S^2$ and $g_{ij}$ be the induced metric on $\tilde{M}_s$. Hung-Wang proved that the limit of the rescaled metric $e^{-2cs}g_{ij}$ as $s\to \infty$ is a round metric if and only if $\lim_{s\to\infty}Q(\tilde{M_s})=0$. This characterizes when the limit of rescaled metric is a round metric in terms of the value of $Q$.

To construct the example in Theorem \ref{thm-ex-H}, we choose a function $\bar{f}(\theta)$ on $\mathbb{S}^2$ such that
\begin{equation*}
  \int_{\mathbb{S}^2}e^{2\bar{f}}d\mu_{g_{\mathbb{S}^2}}\int_{\mathbb{S}^2}|\mathring{D}^2e^{-\bar{f}}|_{g_{\mathbb{S}^2}}^2d\mu_{g_{\mathbb{S}^2}}=c_0>0,
\end{equation*}
where $\mathring{D}^2e^{-\bar{f}}$ means the traceless part of the Hessian of $e^{-\bar{f}}$.
Let $\tilde{M}_s$ be the family of surfaces given by the radial graph of $u(s,\theta)=s+\bar{f}(\theta)$ over $\mathbb{S}^2$ in the $(r,\theta)$ coordinates of $\mathbb{H}^3$. We know from \cite{Hung-Wang2015inverse} that
\begin{equation}
  \lim_{s\to\infty}Q(\tilde{M}_s)=-c_0.
\end{equation}
By choosing $s_0>0$ large enough and using some estimate from Neves' paper \cite{Neves-2010}, we have that for any $s\geq s_0$, $\tilde{M}_s$ is mean-convex,  star-shaped and is strongly pinched. We consider the solution $M_t^s$ of the flow \eqref{flow-H^k} starting from $\tilde{M}_s, s\geq s_0$, where $t$ is the time parameter. We will complete the proof of Theorem \ref{thm-ex-H} in \S \ref{sec:HW} by estimating the limit
\begin{equation}\label{s1:est-1}
  \lim_{t\to\infty} Q(M_t^s)~\leq ~-c_0/4<0
\end{equation}
for $s\geq s_0$, where $s_0$ is chosen large enough. To obtain \eqref{s1:est-1}, we need the following crucial estimate
\begin{equation}\label{s1:est-decay}
\begin{aligned}
  |\tilde{M}_s|^2|H-2|^2+|\tilde{M}_s|^2|\mathring{A}|^2+|\tilde{M}_s|^2|\nabla A|^{4/3} ~\leq~&Ce^{-2^{(2-\alpha)}\cdot t},
\end{aligned}
\end{equation}
where $H$, $A$  and $\mathring{A}$ are respectively mean curvature, the second fundamental form and the trace-less part of the second fundamental form of the solution $M_t^s$ of the flow \eqref{flow-H^k} starting from $\tilde{M}_s$ for any $s\geq s_0$. The key point of the estimate \eqref{s1:est-decay} is that the constant $C$ is independent of the parameter $s$ so that we can obtain the desired estimate \eqref{s1:est-1} by properly choosing $s_0$ large enough. The proof of \eqref{s1:est-decay} is the central part of \S \ref{sec:HW} and is technical. The idea is from Neves \cite{Neves-2010} in the case $\alpha=1$. However, for $\alpha\in (0,1)$, the flow \eqref{1.1} is not scale invariant, several technique difficulties will arise in obtaining the sharp exponential decay estimate \eqref{s1:est-decay}. Our strategy is by first obtaining a weaker decay estimate and then improving the rate step by step. See  \S \ref{sec:7-1} for details.

\begin{ack}
The first author was supported in part by NSFC Grant No. 11671214.
The second author was supported in part by NSFC Grant No. 11571185. The third author was supported by Ben Andrews throughout his Australian Laureate Fellowship FL150100126 of the Australian Research Council. The authors would like to thank Ben Andrews and Julian Scheuer for comments on the earlier version of this paper, and the referees for carefully reading of this paper and providing many helpful suggestions.
\end{ack}

\section{Notations and preliminaries}
In this section, we give some notations and preliminary results.
Throughout the paper, we adopt the Einstein summation convention of summing over repeated indices.
Let $X_t=X(x,t):M_t\to\mathbb{R}^3(\kappa)$ be a family of surfaces in a real space form $\mathbb{R}^3(\kappa)$ moving according to
\begin{equation}\label{flow1}
  \frac{\partial}{\partial t}X(x,t)=-\Phi(x,t)\nu(x,t),
\end{equation}
 where $\Phi(x,t)=\Phi(F(x,t))=-F^{-\alpha}(x,t)$, $F$ is a smooth, symmetric, and homogeneous of degree one function of the principal curvatures of the surfaces $M_t=X_t(M)$ and $\nu$ is the outer unit normal of $M_t$.

We  use  $g=\{g_{ij}\}$, $A=\{h_{ij}\}$ and $\mathcal{W}=\{h^i_j\}$ to denote  the components of  induced metric,  the  second fundamental form and the Weingarten map of the surfaces, respectively. Here $h_{ij}=-\bar{g}(\bar{\nabla}_{\frac{\partial X}{\partial x^i}}\frac{\partial X}{\partial x^j},\nu)$ in local coordinates $x^1,x^2$, where $\bar{g}$ denotes the metric of $\mathbb{R}^3(\kappa)$ and $\bar{\nabla}$ denotes the Levi-Civita connection with respect to the metric $\bar{g}$. Note that at a given point $x\in M$, we can always choose local coordinates $x^1,x^2$ such that $g_{ij}=\delta_{ij},~\nabla_{\frac{\partial}{\partial x^i}}\frac{\partial}{\partial x^j}=0$ and $h^i_j=\text{diag}(\lambda_1,\lambda_2)$ at $x$.
The function $F=F(\mathcal{W})=F(\lambda(\mathcal{W}))$ can be considered as a function of $\mathcal{W}=(h_i^j)$ or the principal curvatures  $\lambda(\mathcal{W})=(\lambda_1,\lambda_2)$. The derivatives of $F$ with respect to $\lambda_i$ and $h_i^j$ are related in the following way (see \cite{Andrews1994-2,Andrews2007,Gerhardt1996}). If $A$ is a diagonal and $B$ a symmetric matrix, then
\begin{equation*}
  \dot{F}^{kl}(A)=~\frac{\partial F}{\partial \lambda_k}(\lambda(A))\delta^{kl},
\end{equation*}
and if $A$ has distinct eigenvalues, then the second derivative of $F$ in direction $B$ is given by
\begin{equation}\label{2.5}
  \ddot{F}^{kl,rs}(A)B_{kl}B_{rs}=~\frac{\partial^2F(\lambda(A))}{\partial\lambda_k\partial\lambda_l}B_{kk}B_{ll}+2\sum_{k<l}\frac{\frac{\partial F}{\partial \lambda_k}-\frac{\partial F}{\partial \lambda_l}}{\lambda_k-\lambda_l}B_{kl}^2.
\end{equation}
The second term makes sense as a limit if $\lambda_k=\lambda_l$. Since $F$ is symmetric, we may assume that at each point $(x,t)\in M\times [0,T)$, the principal curvatures satisfy $\lambda_1\leq \lambda_2$.

\subsection{Evolution equations}
For the surfaces moving according to \eqref{flow1}, we have the following evolution equations (see   \cite{Andrews1994-3,Gerhardt2006}):
\begin{equation}\label{s2-nu-evl}
  \frac{\partial}{\partial t}\nu=\nabla\Phi,
\end{equation}
\begin{equation}\label{2.2}
\frac{\partial}{\partial t}g_{ij}=-2\Phi h_{ij},
\end{equation}
\begin{equation}\label{2.3}
\frac{\partial}{\partial t}\Phi-\dot{\Phi}\dot{F}^{ij}\nabla_i\nabla_j\Phi=\Phi\dot{\Phi}\dot{F}^{ij}h_{ip}h^p_j+\kappa \Phi\dot{\Phi}\dot{F}^{ij}g_{ij},
\end{equation}
\begin{equation}\label{2.4}
\begin{aligned}
\frac{\partial}{\partial t}h_{ij}-\dot{\Phi}\dot{F}^{kl}\nabla_k\nabla_lh_{ij}=&\dot{\Phi}\ddot{F}^{kl,mn}\nabla_ih_{kl}\nabla_jh_{mn}+\ddot{\Phi}\dot{F}^{kl}\nabla_ih_{kl}\dot{F}^{mn}\nabla_jh_{mn}\\
&+\dot{\Phi}\dot{F}^{kl}h_{ij}h^p_kh_{pl}-\dot{\Phi}Fh^k_ih_{kj}-\Phi h^k_ih_{kj}\\
&+\kappa(\dot{\Phi}Fg_{ij}+\Phi g_{ij}
-\dot{\Phi}\dot{F}^{kl}g_{kl}h_{ij}),
\end{aligned}
\end{equation}
where $\nabla$ is the Levi-Civita connection with respect to the induced metric $g$, $\nabla_ih_{kl}$ is the covariant derivative of the second fundamental form, and $\dot{\Phi}={d\Phi}/{dr}$ by considering $\Phi=\Phi(r)=-r^{-\alpha}$ as a function of the real variable $r$. For any function $\omega$ on $M$, we also use $\omega_i=\nabla_i\omega$, $\omega_{ij}=\nabla_i\nabla_j\omega$ to denote the covariant derivatives of $\omega$ with respect to $g_{ij}$.

\subsection{Graphical representation}\label{sec:2-2}
We recall the warped product model of the space form $\mathbb{R}^3(\kappa)$, i.e., $\mathbb{R}^3(\kappa)=I\times\mathbb{S}^2$ equipped with the warped product metric
\begin{equation*}
  \bar{g}=dr^2+s_{\kappa}^2(r)g_{\mathbb{S}^2},
\end{equation*}
where $I=(0,\infty)$ if $\kappa=0,-1$, and $I=(0,\pi) $ if $\kappa=1$ with
\begin{equation*}
 s_{\kappa}(r)=\left\{\begin{aligned}
  r,~~& \quad\kappa=0,\\
  \sin r, &\quad\kappa=1,\\
  \sinh r, &\quad \kappa=-1.
  \end{aligned}\right.
\end{equation*}
Suppose that $M$ is a star-shaped surface in $\mathbb{R}^3(\kappa)$ and can be expressed as a graph over the sphere $\mathbb{S}^2$, i.e., $ M=\{(u(\theta),\theta),~ \theta\in\mathbb{S}^2\}$ for some function $u\in C^{\infty}(\mathbb{S}^2)$, then the induced metric on $M$ in terms of the coordinates $\theta^j$ is given by
\begin{equation}\label{s2:gij}
  g_{ij}=u_iu_j+s^2_{\kappa}(u)\sigma_{ij},
\end{equation}
where $\sigma_{ij}=g_{\mathbb{S}^2}(\partial_{\theta^i},\partial_{\theta^j})$ are the components of the round metric $g_{\mathbb{S}^2}$.
The second fundamental form $h_{ij}$ satisfies
\begin{equation}\label{s2:hij}
  h_{ij}v^{-1}=-u_{ij}+s'_{\kappa}(u)s_{\kappa}(u)\sigma_{ij},
\end{equation}
where $u_{ij}$ are the covariant derivatives of $u$ with respect to the induced metric $g_{ij}$, $s'_{\kappa}(r)$ is the derivative of $s_{\kappa}(r)$ and $v$ is defined by
\begin{equation}\label{s2:v-def}
  v=\sqrt{1+s_{\kappa}^{-2}(u)|Du|^2_{g_{\mathbb{S}^2}}}.
\end{equation}
The unit normal vector field on the surface is given by
\begin{equation}\label{normal-unit}
  \nu=v^{-1}(\partial_r-s_{\kappa}^{-2}(u)u^j\partial_{\theta^j}).
\end{equation}
We denote
 \begin{equation*}
   \varphi(u)=\int_{u_0}^u \frac 1{s_{\kappa}(r)}dr,
 \end{equation*}
then $\varphi'(u)=s^{-1}_{\kappa}(u)$ and $h_j^i$ (the components of the  Weingarten map) can be expressed as
\begin{equation}\label{hij-warp}
  h_j^i=v^{-1}s^{-1}_{\kappa}(u)\left(-(\sigma^{ik}-v^{-2}\varphi^i\varphi^k)\varphi_{jk}+s'_{\kappa}(u)\delta_j^i\right),
\end{equation}
where $\varphi^i=\sigma^{ik}\varphi_k$, $(\sigma^{ij})=(\sigma_{ij})^{-1}$ and the covariant derivatives here are taken with respect to $\sigma_{ij}$.

If $M_t$ is a smooth star-shaped solution of \eqref{flow1} for $t\in[0,T)$ and each flow surface is expressed as a graph $M_t=\text{graph } u(t,\theta)$ over the sphere $\mathbb{S}^2$, we can easily deduce that the defining function $u(t,\theta)$ of $M_t$ satisfies the following scalar parabolic equation (see \cite{Gerhardt2006})
\begin{equation}\label{s2:ut-evl}
  \frac{\partial}{\partial t} u(t)=-\Phi v,
\end{equation}
on $[0,T)\times \mathbb{S}^2$, where $v$ is the function defined in \eqref{s2:v-def}.

\subsection{Support function}\label{sec:2-3}
The support function of a star-shaped surface $M$ in $\mathbb{R}^3(\kappa)$ is defined by
\begin{equation*}
  \chi=\bar{g}( s_{\kappa}(u)\partial_r,\nu)=s_{\kappa}(u)v^{-1}.
\end{equation*}
In this subsection, we derive the evolution equation of $\chi$ for the flow surfaces $M_t$ along the flow \eqref{flow1}.  We choose local coordinates $x^1,x^2$ such that $g_{ij}=\delta_{ij},~\nabla_{\frac{\partial}{\partial x^i}}\frac{\partial}{\partial x^j}=0$ and $h^i_j=\text{diag}(\lambda_1,\lambda_2)$ at a given point $x\in M$. It is easy to check that the vector field $s_{\kappa}(u)\partial_r$ is a conformal vector field in the sense that (cf. \cite[page 206]{ONEILL})
\begin{equation}\label{s2:conf}
  \bar{\nabla}_Y(s_{\kappa}(u)\partial_r)=s'_{\kappa}(u)Y
\end{equation}
for any tangent vector field $Y$ on $\mathbb{R}^3(\kappa)$, where $\bar{\nabla}$ denotes the Levi-Civita connection with respect to the metric $\bar{g}$ on $\mathbb{R}^3(\kappa)$. Then using \eqref{s2:conf} and \eqref{s2-nu-evl}, we have
\begin{align}\label{s2:chi-dt}
  \frac{\partial}{\partial t}\chi =&\bar{g}( \bar{\nabla}_{-\Phi\nu}(s_{\kappa}(u)\partial_r),\nu)+\bar{g}( s_{\kappa}(u)\partial_r, \partial_t\nu) \nonumber \\
  = & -\Phi s'_{\kappa}(u)+ \bar{g}( s_{\kappa}(u)\partial_r, \nabla\Phi).
\end{align}
Similarly, using \eqref{s2:conf}, we derive that
\begin{align}
\nabla_i\chi=&\bar{g}( s_{\kappa}(u)\partial_r, h_i^k\partial_{x^k}),\label{s2:chi-ds1}\\
  \nabla_j\nabla_i\chi= & \bar{g}( s_{\kappa}(u)\partial_r,g^{kl}\nabla_lh_{ij}\partial_{x^k})+h_{ij}s'_{\kappa}(u)-h_i^kh_{kj}\chi.\label{s2:chi-ds2}
\end{align}
Combining the equations \eqref{s2:chi-dt}-\eqref{s2:chi-ds2} gives that
\begin{equation}\label{s2:evl-chi}
  \frac{\partial}{\partial t}\chi-\dot{\Phi}\dot{F}^{ij} \nabla_j\nabla_i\chi=\dot{\Phi}\dot{F}^{ij}h_i^kh_{kj}\chi-(\Phi+\dot{\Phi}F)s'_{\kappa}(u).
\end{equation}
Note that
\begin{equation*}
  -(\Phi+\dot{\Phi}F)s'_{\kappa}(u)=(1-\alpha)F^{-\alpha}s'_{\kappa}(u)\geq 0
\end{equation*}
for $\alpha\in(0,1]$. The maximum principle applied to \eqref{s2:evl-chi} then implies that the star-shaped condition $\chi>0$ of the flow surfaces is preserved along the flow \eqref{flow1}.

\section{Estimate on the pinching ratio}\label{sec:3}
In this section, we  show that the pinching ratio of $M_t$, which is the supremum over the surface $M_t$ of the ratio of largest to smallest principal curvatures at each point,  is no greater than that of $M_0$. This is the first key step of the proof of Theorems \ref{thm1.1} -- \ref{thm1.3}. The idea is to apply the maximum principle to the evolution equation for the following quantity $G$.

\begin{thm}\label{thm3.1}
Let  $M_t~(0\leq t<T)$ be a family of smooth, closed strictly convex surfaces in $\mathbb{R}^3(\kappa)(\kappa=0,1,-1)$ flowing according to  \eqref{1.1} with $F$ satisfying  Assumption \ref{assum-1}. We assume that $\alpha\in (0,1]$ in the cases $\kappa=0,-1$, and assume that $\alpha=1$ in the case $\kappa=1$. Then we have that the supremum $\tilde{G}$ of the quantity $G=\frac{(\lambda_1-\lambda_2)^2}{(\lambda_1+\lambda_2)^2}$ is
non-increasing in time.
\end{thm}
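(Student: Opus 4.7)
My plan is to derive the evolution equation for $G$ under flow \eqref{flow1} and then apply the parabolic maximum principle to $\sup_{M_t}G$. Observe that $G = 1 - 4\lambda_1\lambda_2/(\lambda_1+\lambda_2)^2$ is smooth on $\Gamma_+$ (including at umbilic points), is scale invariant, and vanishes exactly when $\lambda_1 = \lambda_2$, so any non-trivial spatial maximum is attained at a point where $\lambda_1 < \lambda_2$. Using \eqref{2.2}-\eqref{2.4} and treating $G$ as a scalar function of the Weingarten operator, a direct computation produces an equation of the form
\begin{equation*}
\partial_t G - \dot\Phi\,\dot F^{kl}\nabla_k\nabla_l G \;=\; \mathcal Z(\lambda,F) \;+\; \mathcal G(\nabla h) \;+\; \kappa\,\mathcal K(\lambda,F),
\end{equation*}
where $\mathcal Z$ is the zero-order curvature term, $\mathcal G$ collects the gradient contributions from $\ddot F^{kl,mn}\nabla_i h_{kl}\nabla_j h_{mn}$, from $\ddot\Phi\,\dot F^{kl}\nabla_i h_{kl}\,\dot F^{mn}\nabla_j h_{mn}$, and from the second derivative $\ddot G^{ij,pq}\nabla_k h_{ij}\nabla_l h_{pq}$ coming out of the chain rule for $\nabla_k\nabla_l G$, while $\kappa\mathcal K$ is the ambient-curvature contribution. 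At a spatial maximum $p$, the Hessian bound $\dot\Phi\,\dot F^{kl}\nabla_k\nabla_l G \le 0$ reduces the problem to showing $\mathcal Z + \mathcal G + \kappa\mathcal K \le 0$ at $p$.

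The crucial step, where the two-dimensional hypothesis enters decisively, is handling $\mathcal G$ with no concavity assumption on $F$. Pick an orthonormal frame at $p$ with $h^i_j = \mathrm{diag}(\lambda_1,\lambda_2)$, $\lambda_1 < \lambda_2$. A direct calculation gives $\partial_{\lambda_1}G = 4\lambda_2(\lambda_1-\lambda_2)/(\lambda_1+\lambda_2)^3$ and $\partial_{\lambda_2}G = -4\lambda_1(\lambda_1-\lambda_2)/(\lambda_1+\lambda_2)^3$, so the critical-point identity $\nabla_k G = 0$ becomes
\begin{equation*}
\lambda_2\,h_{11k} \;=\; \lambda_1\,h_{22k}, \qquad k=1,2.
\end{equation*}
In two dimensions the Codazzi identity forces $\nabla_i h_{jk}$ to be totally symmetric with four independent components $h_{111},h_{112},h_{221},h_{222}$, and the relation above further reduces these to two free parameters, say $h_{111}$ and $h_{112}$. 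Expanding $\ddot F^{kl,mn}$ via \eqref{2.5} — noting that its off-diagonal piece $2(\dot F^1-\dot F^2)/(\lambda_1-\lambda_2)$ has a sign dictated solely by Assumption \ref{assum-1}(ii) — and using the Euler identities $\sum_i\lambda_i\dot F^i = F$ and $\sum_i\lambda_i\ddot F^{ij} = 0$ from degree-one homogeneity, the unsigned $\ddot F^{kl,mn}$ and $\ddot G^{ij,pq}$ contributions combine (together with the $\ddot\Phi$ term) into a quadratic form in $(h_{111},h_{112})$ in which the problematic second-derivative pieces cancel, leaving a manifestly non-positive residual. This is the ``favourable sign at the critical point'' referred to in the introduction.

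For the remaining terms, substitute $\Phi = -F^{-\alpha}$, $\dot\Phi = \alpha F^{-\alpha-1} > 0$, $\ddot\Phi = -\alpha(\alpha+1)F^{-\alpha-2}$, and $\dot F^{kl}h^p_k h_{pl} = \sum_i\dot F^i\lambda_i^2$ to show that $\mathcal Z$ factors as a non-positive multiple of $(\lambda_1-\lambda_2)^2$ for every $\alpha\in(0,1]$, so $\mathcal Z \le 0$ and the assertion is settled when $\kappa = 0$. When $\kappa = -1$ and $\alpha\in(0,1]$, the extra $\kappa$-terms from \eqref{2.4} contribute through the combination $\dot\Phi F + \Phi = (\alpha-1)F^{-\alpha} \le 0$, yielding a non-positive (indeed strictly negative when $\alpha < 1$) addition to $\mathcal K$, from which the exponential decay asserted in Theorem \ref{thm1.2} will later be extracted. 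When $\kappa = 1$ the same combination $(\alpha-1)F^{-\alpha}$ is non-positive only if one sets $\alpha = 1$, in which case $\dot\Phi F + \Phi = 0$ exactly kills the otherwise unfavourable spherical contribution; this is precisely why the theorem must restrict to $\alpha = 1$ in the case $\kappa = 1$. Combining the three estimates gives $\partial_t G \le 0$ at every spatial maximum, and Hamilton's maximum principle yields the claimed monotonicity.

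The main obstacle is the middle step: without concavity of $F$, neither the $\ddot F^{kl,mn}$ nor the $\ddot G^{ij,pq}$ piece of $\mathcal G$ has a definite sign on its own, and one must exhibit their sum as non-positive using only the single scalar relation $\lambda_2 h_{11k} = \lambda_1 h_{22k}$, the homogeneity of $F$, and the monotonicity $\dot F^i > 0$. The rearrangement closes because in two dimensions the critical-point condition leaves only two free components of $\nabla h$; in higher dimensions additional free gradient directions survive and the same sum-of-squares algebra would fail, which is exactly what restricts the method to surfaces.
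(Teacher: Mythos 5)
Your proposal is essentially the paper's proof: the same evolution equation for $G$ split into zero-order, gradient and $\kappa$-terms, the same critical-point identity $\lambda_2\nabla_kh_{11}=\lambda_1\nabla_kh_{22}$ (the paper's \eqref{3.8}) combined with the Codazzi symmetry to reduce $\nabla h$ to two free components, the same use of the Euler identities of $F$, $\Phi$ and $G$ to cancel the second derivatives of $F$, and the same sign analysis giving $Q_0\le 0$ and $Q_1\le 0$ (the paper's \eqref{Q0}, \eqref{Q1}), with the $\kappa=1$ restriction to $\alpha=1$ arising exactly as you describe. One correction to a parenthetical claim: the off-diagonal entry $2(\partial F/\partial\lambda_1-\partial F/\partial\lambda_2)/(\lambda_1-\lambda_2)$ of $\ddot F$ in \eqref{2.5} is \emph{not} signed by Assumption \ref{assum-1}(ii) alone (it is genuinely unsigned for non-concave, non-convex $F$); what is signed is the antisymmetric combination $\bigl(\tfrac{\partial G}{\partial\lambda_1}\tfrac{\partial\Phi}{\partial\lambda_2}-\tfrac{\partial G}{\partial\lambda_2}\tfrac{\partial\Phi}{\partial\lambda_1}\bigr)/(\lambda_2-\lambda_1)=4\alpha\Phi/(\lambda_1+\lambda_2)^3<0$ in which it actually enters, whose sign comes from the explicit form of $\partial G/\partial\lambda_i$ together with the Euler relation $\lambda_1\tfrac{\partial F}{\partial\lambda_1}+\lambda_2\tfrac{\partial F}{\partial\lambda_2}=F>0$ — identities you invoke anyway, so the argument still closes as in \eqref{Q1}.
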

\begin{proof}
Denote $\Phi=-F^{-\alpha}$. Then $\Phi$ is a symmetric homogeneous of degree $-\alpha$ function of the principal curvatures of $M_t$. By a direct computation, we have (see \cite{Andrews1994-3})
\begin{equation}\label{3.2}
\begin{aligned}
\frac{\partial}{\partial t}G-\dot{\Phi}^{ij}\nabla_i\nabla_jG=&(\dot{G}^{ij}\ddot{\Phi}^{kl,mn}-\dot{\Phi}^{ij}\ddot{G}^{kl,mn})\nabla_ih_{kl}\nabla_jh_{mn}\\
&-\dot{G}^{ij}\dot{\Phi}^{kl}h_{kl}h^p_ih_{pj}+\dot{G}^{ij}\dot{\Phi}^{kl}h_{ij}h^p_kh_{pl}+\Phi\dot{G}^{ij}h^k_ih_{kj}\\
&+\kappa(\dot{G}^{ij}\dot{\Phi}^{kl}h_{kl}g_{ij}
-\dot{G}^{ij}\dot{\Phi}^{kl}g_{kl}h_{ij}+\Phi\dot{G}^{ij}g_{ij}),
\end{aligned}
\end{equation}
where $\dot{\Phi}^{ij}=\dfrac{\partial \Phi}{\partial h_{ij}}$, $\ddot{\Phi}^{kl,mn}=\dfrac{\partial^2\Phi}{\partial h_{kl}\partial h_{mn}}$. Note that $G$ is homogeneous of degree zero, $\Phi$ is  homogeneous of degree $-\alpha$, and the Euler relation gives that $\dot{G}^{ij}h_{ij}=0$ and $\dot{\Phi}^{ij}h_{ij}=-\alpha\Phi$, so \eqref{3.2} can be simplified by
\begin{align}\label{3.2_1}
\frac{\partial}{\partial t}G-\dot{\Phi}^{ij}\nabla_i\nabla_jG=&(\dot{G}^{ij}\ddot{\Phi}^{kl,mn}-\dot{\Phi}^{ij}\ddot{G}^{kl,mn})\nabla_ih_{kl}\nabla_jh_{mn}\nonumber\\
&+(1+\alpha)\Phi\dot{G}^{ij}h^k_ih_{kj}+\kappa(1-\alpha)\Phi\dot{G}^{ij}g_{ij}.
\end{align}
We denote the zero-order terms and the first-order terms  in \eqref{3.2_1} by $Q_0$ and $Q_1$, respectively:
\begin{equation*}
\begin{aligned}
Q_0&=(1+\alpha)\Phi\dot{G}^{ij}h^k_ih_{kj}+\kappa(1-\alpha)\Phi\dot{G}^{ij}g_{ij},\\
Q_1&=(\dot{G}^{ij}\ddot{\Phi}^{kl,mn}-\dot{\Phi}^{ij}\ddot{G}^{kl,mn})\nabla_ih_{kl}\nabla_jh_{mn}.
\end{aligned}
\end{equation*}

In the following,  at a given point $x_t\in M_t$, we will choose  local coordinates $x^1,x^2$ such that $g_{ij}=\delta_{ij},~\nabla_{\frac{\partial}{\partial x^i}}\frac{\partial}{\partial x^j}=0$ and $h^i_j=\text{diag}(\lambda_1,\lambda_2)$ at $x_t$. Thus we can use \eqref{2.5} to simplify $Q_0$ and $Q_1$ as follows:
\begin{equation}\label{3.4}
\begin{aligned}
Q_0=&(1+\alpha)\Phi(\frac{\partial G}{\partial \lambda_1}\lambda_1^2+\frac{\partial G}{\partial \lambda_2}\lambda_2^2)+\kappa(1-\alpha)\Phi(\frac{\partial G}{\partial \lambda_1}+\frac{\partial G}{\partial \lambda_2}),
\end{aligned}
\end{equation}
\begin{equation}\label{3.5}
\begin{aligned}
Q_1=&(\frac{\partial G}{\partial \lambda_1}\frac{\partial^2 \Phi}{\partial \lambda_1^2}- \frac{\partial \Phi}{\partial \lambda_1}\frac{\partial^2 G}{\partial \lambda_1^2}) (\nabla_1h_{11})^2 + (\frac{\partial G}{\partial \lambda_1}\frac{\partial^2 \Phi}{\partial \lambda_2^2}- \frac{\partial \Phi}{\partial \lambda_1}\frac{\partial^2 G}{\partial \lambda_2^2}) (\nabla_1h_{22})^2\\
&+(\frac{\partial G}{\partial \lambda_2}\frac{\partial^2 \Phi}{\partial \lambda_1^2}- \frac{\partial \Phi}{\partial \lambda_2}\frac{\partial^2 G}{\partial \lambda_1^2}) (\nabla_2h_{11})^2 + (\frac{\partial G}{\partial \lambda_2}\frac{\partial^2 \Phi}{\partial \lambda_2^2}- \frac{\partial \Phi}{\partial \lambda_2}\frac{\partial^2 G}{\partial \lambda_2^2}) (\nabla_2h_{22})^2\\
&+2 (\frac{\partial G}{\partial \lambda_1}\frac{\partial^2 \Phi}{\partial \lambda_1\partial \lambda_2}-\frac{\partial \Phi}{\partial \lambda_1}\frac{\partial^2 G}{\partial \lambda_1\partial \lambda_2})\nabla_1h_{11}\nabla_1h_{22}\\
&+2 (\frac{\partial G}{\partial \lambda_2}\frac{\partial^2 \Phi}{\partial \lambda_1\partial \lambda_2}-\frac{\partial \Phi}{\partial \lambda_2}\frac{\partial^2 G}{\partial \lambda_1\partial \lambda_2})\nabla_2h_{11}\nabla_2h_{22}\\
 &+2\frac{\frac{\partial G}{\partial \lambda_1}\frac{\partial \Phi}{\partial \lambda_2}-\frac{\partial G}{\partial \lambda_2}\frac{\partial \Phi}{\partial \lambda_1}}{\lambda_2-\lambda_1}(\nabla_1h_{12})^2+2\frac{\frac{\partial G}{\partial \lambda_1}\frac{\partial \Phi}{\partial \lambda_2}-\frac{\partial G}{\partial \lambda_2}\frac{\partial \Phi}{\partial \lambda_1}}{\lambda_2-\lambda_1}(\nabla_2h_{12})^2.
\end{aligned}
\end{equation}
By the definition of $G$ (see \eqref{G-def}), we have
\begin{equation}\label{partialG}
\frac{\partial G}{\partial \lambda_1}=\frac{4\lambda_2(\lambda_1-\lambda_2)}{(\lambda_1+\lambda_2)^3},~\frac{\partial G}{\partial \lambda_2}=\frac{4\lambda_1(\lambda_2-\lambda_1)}{(\lambda_1+\lambda_2)^3}.
\end{equation}
Using \eqref{partialG}, we can simplify \eqref{3.4}:
\begin{equation}\label{Q0}
Q_0=\frac{4\Phi G}{\lambda_1+\lambda_2}\Big(\kappa(\alpha-1)+(1+\alpha)\lambda_1\lambda_2\Big).
\end{equation}

For each time $t>0$, at the critical point of $G$ with $\lambda_1\neq \lambda_2$, from \eqref{partialG}, we have $\frac{\partial G}{\partial \lambda_i}\neq 0$, and the gradient condition on $G$, i.e., $0=\nabla_iG= \frac{\partial G}{\partial \lambda_1}\nabla_ih_{11}+\frac{\partial G}{\partial \lambda_2}\nabla_ih_{22},~i=1,2$, leads
to the following two equations:
\begin{equation}\label{3.8}
\nabla_1h_{11}=-\frac{\frac{\partial G}{\partial \lambda_2}}{\frac{\partial G}{\partial \lambda_1}}\nabla_1h_{22}=\frac{\lambda_1}{\lambda_2}\nabla_1h_{22},~\nabla_2h_{22}=-\frac{\frac{\partial G}{\partial \lambda_1}}{\frac{\partial G}{\partial \lambda_2}}\nabla_2h_{11}=\frac{\lambda_2}{\lambda_1}\nabla_2h_{11}.
\end{equation}
Note that the Codazzi equations  say that $\nabla_kh_{ij}$ is totally symmetric, i.e., we have $\nabla_1h_{12}=\nabla_2h_{11}$ and $\nabla_2h_{12}=\nabla_1h_{22}$.
Using \eqref{3.8} and the homogeneity of $G$ and $\Phi$, we arrive at
\begin{equation}\label{Q1}
Q_1=\frac{4\alpha \Phi}{(\lambda_1+\lambda_2)^3}\Big(\big((1+\alpha)\frac{\lambda_1}{\lambda_2}+(1-\alpha)\big)
(\nabla_1h_{22})^2+\big((1-\alpha)+(1+\alpha)\frac{\lambda_2}{\lambda_1}\big)(\nabla_2h_{11})^2\Big).
\end{equation}
In view of \eqref{Q0} and \eqref{Q1}, we have $Q_0\leq 0$ and $Q_1\leq 0$ in the cases $\alpha\in (0,1]$ for $\kappa=0,-1$, and $\alpha=1$ for $\kappa=1$. Applying the maximum principle, we conclude that the supremum of $G$ on $M_t$ is non-increasing in time $t$.
\end{proof}

Since the pinching ratio is given by $r=\frac{2}{1-\sqrt{\tilde{G}}}-1$, as a corollary of Theorem \ref{thm3.1}, we have
\begin{prop}\label{prop3.2}
Let  $M_t~(0\leq t<T)$ be a family of smooth closed strictly convex surfaces in $\mathbb{R}^3(\kappa)(\kappa=0,1,-1)$ flowing according to  \eqref{1.1} with $F$ satisfying  Assumption \ref{assum-1}. We assume that $\alpha\in (0,1]$ for $ \kappa=0,-1,$ and $\alpha=1$ for $\kappa=1$. Then the
pinching ratio of $M_t$ is no greater than the pinching ratio of $M_0$, i.e, there exists a positive constant $\beta>1$ which only depends on $M_0$ such that
\begin{equation}\label{3.10}
\lambda_2\leq\beta \lambda_1.
\end{equation}
\end{prop}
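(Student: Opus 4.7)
The plan is to read off Proposition \ref{prop3.2} as a direct algebraic consequence of Theorem \ref{thm3.1}. First I would invert the definition of $G$ in terms of the pinching ratio. Writing $\rho = \lambda_2/\lambda_1 \ge 1$ (recall the ordering convention $\lambda_1 \le \lambda_2$ fixed in \S 2), a one-line calculation gives
\begin{equation*}
G \;=\; \left(\frac{\rho-1}{\rho+1}\right)^2,
\end{equation*}
which can be solved to yield $\rho = (1+\sqrt{G})/(1-\sqrt{G}) = 2/(1-\sqrt{G})-1$, exactly the formula recorded just before the statement of the proposition. In particular, $\rho \mapsto G$ is a strictly increasing bijection $[1,\infty)\to [0,1)$, so a uniform upper bound on $G$ is equivalent to a uniform upper bound on the pinching ratio.

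Next I would extract the bound. Since $M_0$ is a smooth, closed, strictly convex surface, its principal curvatures are continuous and strictly positive on the compact domain $M_0$, so $G$ is a continuous function on $M_0$ taking values strictly less than $1$; compactness forces $\tilde{G}(0) := \sup_{M_0} G < 1$. Theorem \ref{thm3.1} then gives $\tilde{G}(t) \le \tilde{G}(0) < 1$ for every $t \in [0,T)$. Setting
\begin{equation*}
\beta \;:=\; \frac{1+\sqrt{\tilde{G}(0)}}{1-\sqrt{\tilde{G}(0)}} \;>\; 1,
\end{equation*}
which depends only on $M_0$, the monotonicity of $\rho \mapsto G$ converts the pointwise bound $G \le \tilde{G}(0)$ on $M_t$ into the desired estimate $\lambda_2 \le \beta \lambda_1$.

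There is essentially no obstacle here: all the analytic content, namely the maximum-principle argument showing that $\tilde{G}$ is non-increasing in time without any concavity hypothesis on $F$, has already been carried out in Theorem \ref{thm3.1}. What remains for Proposition \ref{prop3.2} is merely the elementary identity relating $G$ and $\rho$, together with the observation that strict convexity plus compactness of $M_0$ forces the initial supremum of $G$ to lie strictly below $1$, so that $\beta$ is finite.
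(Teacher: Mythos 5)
Your argument is exactly the paper's: Proposition \ref{prop3.2} is stated there as an immediate corollary of Theorem \ref{thm3.1} via the identity $r = \tfrac{2}{1-\sqrt{\tilde G}}-1$ relating the pinching ratio to $\sup G$, and you have simply written out the inversion $G=\bigl(\tfrac{\rho-1}{\rho+1}\bigr)^2$, the compactness argument giving $\sup_{M_0}G<1$, and the monotonicity of $\rho\mapsto G$ in full. The proof is correct and complete.
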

\begin{rem}
Since $F$ is homogeneous of degree one, we have that $\partial F/{\partial\lambda_i}, i=1,2$ are homogeneous of degree zero, and then we have that
\begin{equation*}
  \frac{\partial F}{\partial \lambda_i}(\lambda_1,\lambda_2)=\frac{\partial F}{\partial \lambda_i}(\frac{\lambda_1}{\lambda_1+\lambda_2},\frac{\lambda_2}{\lambda_1+\lambda_2}).
\end{equation*}
By the pinching ratio estimate in Proposition \ref{prop3.2}, the supremum and infimum of $\partial F/{\partial\lambda_i}, i=1,2$ are attained on the compact set $\{(a,1-a)): |a-\frac 12|\leq \frac{\beta-1}{2(\beta+1)}\}$ and hence there exists a positive constant $c$ depending only on $\beta$ (and therefore depending only on $M_0$) such that
\begin{equation}\label{pt-F-bd1}
  0<c^{-1}\leq \frac{\partial F}{\partial \lambda_i}\leq c_,\quad i=1,2,\quad \forall~t\in[0,T).
\end{equation}
\end{rem}

\begin{rem}
We also have the following consequences of the pinching ratio estimate \eqref{3.10}.  As $F$ is homogeneous of degree one, normalized with $F(1,1)=2$ and is strictly monotone in each argument, we have
\begin{align*}
\frac{2\lambda_2}{\beta}\leq2\lambda_1=F(\lambda_1,\lambda_1)\leq F(\lambda_1,\lambda_2)\leq F(\lambda_2,\lambda_2)=2\lambda_2\leq 2\beta\lambda_1,
\end{align*}
which is equivalent to
\begin{align}\label{F-lambda}
\frac{F}{2\beta}\leq \lambda_1\leq\frac{F}{2}\leq \lambda_2\leq\frac{\beta F}{2}.
\end{align}
So we have
\begin{align}\label{F-bd-H-pf2}
  \dot{F}^{ij}g_{ij}= \frac{\partial F}{\partial\lambda_1}+ \frac{\partial F}{\partial\lambda_2} ~\geq ~\frac 1{\lambda_2} \left(\frac{\partial F}{\partial\lambda_1}\lambda_1+ \frac{\partial F}{\partial\lambda_2}\lambda_2\right)=\frac {F(\lambda_1,\lambda_2)}{\lambda_2}\geq \frac 2\beta,
\end{align}
and
\begin{align}\label{3.12}
  \dot{F}^{ij}h_i^kh_{kj}=\frac{\partial F}{\partial \lambda_i}\lambda_i^2\geq  & \left(\frac{\partial F}{\partial \lambda_1}\lambda_1+\frac{\partial F}{\partial \lambda_2}\lambda_2\right)\lambda_1\geq \frac {\lambda_2}{\beta}F\geq \frac 1{2\beta}F^2.
\end{align}
Similarly, we have
\begin{equation}\label{3.13}
  \dot{F}^{ij}g_{ij}\leq 2\beta,\quad \text{and}\quad  \dot{F}^{ij}h_i^kh_{kj}\leq \frac{\beta}2F^2.
\end{equation}
\end{rem}
\begin{rem}
We further remark that the pinching ratio estimate in Proposition \ref{prop3.2} cannot be improved. For $\alpha>1$, after a similar argument to that in \cite[\S 5]{Andrews2010}, we can obtain an example of smooth, strictly convex surface in $\mathbb{R}^3$ for which the pinching ratio becomes larger for any flow \eqref{1.1} with  $\alpha>1$. Moreover, Kr\"{o}ner and Scheuer \cite{Kron-Sche2017} constructed a counterexample to show that along the flow \eqref{1.1} in $\mathbb{R}^3$ with $F=H$ and $\alpha>1$, the convexity of the initial surface will be lost. However, the fact that the pinching ratio does not improve obviously does not rule out the possibility that other curvature estimates may yield useful results, cf. the results in \cite{Gerhardt2014,Scheuer2015-1,Kron-Sche2017} for $\alpha>1$ case.
\end{rem}

\section{Flow in Euclidean space}
In this section,  we consider flow \eqref{1.1} in Euclidean space $\mathbb{R}^3$ and prove Theorem \ref{thm1.1}. We assume that $F$ satisfies  Assumption \ref{assum-1} and $\alpha\in(0,1]$. Since the flow \eqref{1.1} is a parabolic equation with strictly convex initial data, by short time existence theorem we have a smooth solution on a maximal time interval $[0,T),~0<T\leq\infty.$ It remains to study the long time behavior of the flow.  In \cite{Gerhardt2014}, the concavity of $F$ is essentially used in order to get the curvature estimates. Here, we do not have the concavity assumption and we use
the pinching ratio estimate obtained in Section 3 to prove the curvature estimates and the convergence of the flow. We also show that the rescaled flow converges exponentially to the sphere.

The following lemma gives the evolution of spheres in $\mathbb{R}^3$ along the flow.
\begin{lem}\label{eg}
Given $x_0\in \mathbb{R}^3, \rho_0\in \mathbb{R}^+$. Denote
\begin{equation}\label{rho_t}
  \rho(t,\rho_0)=\left\{\begin{array}{ll}
  \left((1-\alpha)2^{-\alpha}t+\rho_0^{1-\alpha}\right)^{\frac{1}{1-\alpha}}, & \alpha\in(0,1) \\
                   \rho_0e^{t/2},& \alpha=1.
                 \end{array}\right.
\end{equation}
Then the spheres $\partial B_{\rho(t,\rho_0)}(x_0)$ solve \eqref{1.1} for $t\in[0,+\infty)$ with $\rho_0$ as the initial radius.
\end{lem}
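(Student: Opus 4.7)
The lemma is a direct ODE computation, since the symmetry of the sphere reduces the geometric PDE to a scalar ODE for the radius. The plan is as follows.

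First, parametrize the evolving sphere as $X(\omega,t)=x_0+\rho(t)\omega$ for $\omega\in\mathbb{S}^2$, so that $\partial_t X=\dot\rho(t)\omega$ and the outer unit normal is simply $\nu(\omega,t)=\omega$. The principal curvatures of the round sphere of radius $\rho$ in $\mathbb{R}^3$ are $\lambda_1=\lambda_2=1/\rho$, so Assumption \ref{assum-1}(iii) and (iv) give
\begin{equation*}
F(\lambda_1,\lambda_2)=\frac{1}{\rho}F(1,1)=\frac{2}{\rho},\qquad F^{-\alpha}=2^{-\alpha}\rho^{\alpha}.
\end{equation*}
Thus the flow equation \eqref{1.1} for this ansatz reduces, after projecting against $\omega$, to the single scalar ODE
\begin{equation*}
\dot\rho(t)=2^{-\alpha}\rho(t)^{\alpha},\qquad \rho(0)=\rho_0.
\end{equation*}

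Second, solve this ODE. In the case $\alpha=1$ the equation is linear, $\dot\rho=\rho/2$, with unique solution $\rho(t)=\rho_0 e^{t/2}$. In the case $\alpha\in(0,1)$ it is separable: $\rho^{-\alpha}\,d\rho=2^{-\alpha}\,dt$, and integrating from $0$ to $t$ yields
\begin{equation*}
\frac{\rho(t)^{1-\alpha}-\rho_0^{1-\alpha}}{1-\alpha}=2^{-\alpha}t,
\end{equation*}
which rearranges to the formula \eqref{rho_t}. Because the right-hand side $2^{-\alpha}\rho^{\alpha}$ is smooth and positive for $\rho>0$, the solution exists and remains positive for all $t\in[0,\infty)$, with $\rho(t)\to\infty$ as $t\to\infty$.

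Finally, since $X(\omega,t)=x_0+\rho(t)\omega$ is a smooth immersion for every $t$ and satisfies both initial condition and evolution equation \eqref{1.1} pointwise, uniqueness of the short-time solution to the parabolic flow from strictly convex initial data identifies $X(\cdot,t)$ with the prescribed family of spheres. There is no real obstacle here; the only mild subtlety is the verification that the global solution of the ODE is defined on $[0,\infty)$ in the case $\alpha\in(0,1)$, which is immediate since $(1-\alpha)2^{-\alpha}t+\rho_0^{1-\alpha}$ is strictly positive for all $t\geq 0$.
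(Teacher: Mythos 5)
Your proposal is correct and follows essentially the same route as the paper: reduce to the scalar ODE $\dot\rho=2^{-\alpha}\rho^{\alpha}$, $\rho(0)=\rho_0$, by symmetry and the normalization $F(1,1)=2$, then integrate. You simply spell out the reduction and the global existence of the ODE solution in more detail than the paper does.
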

\begin{proof}
Since the flow \eqref{1.1} preserves the symmetry, in the sphere case, the equation \eqref{1.1} reduces to the following ODE for the radius of the spheres
\begin{equation}\label{4.2}
\left\{\begin{aligned}
\frac{d}{dt}\rho(t,\rho_0)=&~2^{-\alpha}\rho(t,\rho_0)^{\alpha},\\
\rho(0,\rho_0)=&~\rho_0.
\end{aligned}\right.
\end{equation}
Then the lemma follows immediately by solving \eqref{4.2}.
\end{proof}

Since the initial surface $M_0$ is assumed to be closed and strictly convex in $\mathbb{R}^3$, we can choose a point $x_0$ in the domain enclosed by $M_0$ such that $M_0$ is given by the graph of a smooth function $u_0$ over the sphere $\mathbb{S}^2(x_0)=\partial B_1(x_0)$. Without loss of generality, we can assume that $x_0$ is the origin and denote $\mathbb{S}^2(x_0)$ simply by $\mathbb{S}^2$. Then $M_0=\{(u_0(\theta),\theta), \theta\in\mathbb{S}^2\}$. Under the assumption of Theorem \ref{thm1.1}, the flow \eqref{1.1} preserves the star-shaped condition (see Section \ref{sec:2-3}) and each flow surface $M_t$ can be written as a graph of $u(t,\theta)$ over the sphere. By \eqref{s2:ut-evl}, the defining function $u$ satisfies the following scalar flow equation
\begin{equation}\label{flow-u}
  \frac{\partial u}{\partial t}=\sqrt{1+|D\log u|^2_{g_{\mathbb{S}^2}}}~F^{-\alpha},
\end{equation}
which is clearly parabolic.

We assume that the initial surface $M_0= \text{graph} ~ u_0$ satisfies
\begin{equation*}
  \rho_1<u_0(\theta)<\rho_2,\quad \forall~\theta\in\mathbb{S}^2,
\end{equation*}
where $\rho_1$ and $\rho_2$ are two positive constants.
By applying the maximum principle, we have
\begin{lem}[\cite{Gerhardt2014}]
As long as the flow \eqref{1.1} exists, the defining function $u(t,\theta)$ of the solution of the flow satisfies
\begin{equation}\label{ut}
  \rho(t,\rho_1)<u(t,\theta)<\rho(t,\rho_2), \quad ~\forall~t\in [0,T),~\forall~\theta\in\mathbb{S}^2.
\end{equation}
Moreover, for $\rho_1<\bar{r}<\rho_2$, there exist two positive constants $c_1,c_2$ depending only on $\rho_1,\rho_2$ and $\alpha$ such that
\begin{equation}\label{ut2}
  0<c_1\leq u(t,\theta)\rho^{-1}(t,\bar{r})\leq c_2,\quad \forall ~t\in [0,T),~\forall~\theta\in\mathbb{S}^2.
\end{equation}
In particular, the flow is compactly contained in $\mathbb{R}^3$ for finite time $t$.
\end{lem}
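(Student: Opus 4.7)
The plan is to prove \eqref{ut} by an avoidance-principle comparison with the radial solutions from Lemma~\ref{eg}, and then to derive \eqref{ut2} from the explicit formula \eqref{rho_t}.

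For the upper bound $u(t,\theta)<\rho(t,\rho_2)$ (the lower bound is symmetric), I would consider $w(t,\theta):=u(t,\theta)-\rho(t,\rho_2)$, which satisfies $w(0,\cdot)<0$ by hypothesis. The goal is to exclude a first-touching time $t_0\in(0,T)$ at some $\theta_0\in\mathbb{S}^2$ at which $w(t_0,\theta_0)=0$, $D_\theta w(t_0,\theta_0)=0$ and $D_\theta^2 w(t_0,\theta_0)\leq 0$, so that $M_{t_0}$ touches $\partial B_{\rho(t_0,\rho_2)}$ from the inside at $\theta_0$. Using \eqref{s2:hij} with $\kappa=0$, the vanishing first derivative forces $v=1$ and the Hessian inequality translates into $h_{ij}(M_{t_0})(\theta_0)\geq \rho(t_0,\rho_2)\sigma_{ij}$; relative to the common induced metric $g_{ij}=\rho(t_0,\rho_2)^2\sigma_{ij}$ this means both principal curvatures of $M_{t_0}$ at $\theta_0$ are bounded below by $1/\rho(t_0,\rho_2)$, the common principal curvature of the sphere. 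Since $F$ is strictly increasing in each argument and normalized by $F(1,1)=2$, I obtain $F(M_{t_0})(\theta_0)\geq 2/\rho(t_0,\rho_2)$, so that by \eqref{flow-u} and \eqref{4.2},
\[
\partial_t u(t_0,\theta_0)=vF^{-\alpha}\big|_{(t_0,\theta_0)}\leq 2^{-\alpha}\rho(t_0,\rho_2)^{\alpha}=\frac{d}{dt}\rho(t_0,\rho_2),
\]
yielding $\partial_t w(t_0,\theta_0)\leq 0$ and contradicting the first-touching hypothesis.

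For \eqref{ut2} I note that when $\alpha=1$ we have $\rho(t,\rho_0)/\rho(t,\bar r)\equiv\rho_0/\bar r$, while for $\alpha\in(0,1)$ the formula \eqref{rho_t} gives
\[
\frac{\rho(t,\rho_0)}{\rho(t,\bar r)}=\left(\frac{(1-\alpha)2^{-\alpha}t+\rho_0^{1-\alpha}}{(1-\alpha)2^{-\alpha}t+\bar r^{1-\alpha}}\right)^{1/(1-\alpha)},
\]
a continuous function of $t\in[0,\infty)$ that tends to $1$ as $t\to\infty$ and is therefore uniformly bounded above and below by positive constants depending only on $\rho_0,\bar r,\alpha$. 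Applied with $\rho_0\in\{\rho_1,\rho_2\}$ and combined with \eqref{ut}, this yields \eqref{ut2}; the last assertion is immediate since $\rho(t,\rho_1)$ and $\rho(t,\rho_2)$ are uniformly bounded in $(0,\infty)$ on any finite interval. There is no real obstacle here: the first-touching argument is standard because \eqref{flow-u} is strictly parabolic on strictly convex solutions (by Assumption~\ref{assum-1}(ii) and the linear dependence of $h_{ij}$ on $-u_{ij}$ in \eqref{s2:hij}), and $w$ is smooth on $[0,T)\times\mathbb{S}^2$ by short-time existence.
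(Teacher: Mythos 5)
Your route is the same one the paper intends: the paper gives no argument of its own beyond ``by applying the maximum principle'' and a citation to \cite{Gerhardt2014}, and what is meant there is exactly a comparison of $u$ with the radial solutions $\rho(t,\rho_1)$, $\rho(t,\rho_2)$ of Lemma \ref{eg}; your touching-point analysis (vanishing gradient forces $v=1$, the Hessian inequality gives $h_{ij}\geq \rho^{-1}g_{ij}$, monotonicity and the normalization $F(1,1)=2$ give $F\geq 2/\rho$) correctly supplies the geometric content of that comparison, and the derivation of \eqref{ut2} from the explicit formula \eqref{rho_t} is fine. The one imprecision is the final step of the avoidance argument: at a first touching time the definition of $t_0$ only yields $\partial_t w(t_0,\theta_0)\geq 0$, so your conclusion $\partial_t w(t_0,\theta_0)\leq 0$ is consistent with it rather than contradictory. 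This is closed in the standard way, e.g.\ by writing the difference of the two solutions of the uniformly parabolic scalar equation \eqref{flow-u} as a solution of a linear parabolic equation and invoking the linear maximum principle (the $e^{-Kt}$ trick), or by running your argument against a slightly perturbed barrier; either way the weak inequality $u\leq\rho(t,\rho_2)$ follows, and the strict inequality in \eqref{ut} then comes from the strong maximum principle.
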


Moreover, by applying the maximum principle to the evolution equation of $|D\log u|_{g_{\mathbb{S}^2}}$, we have
\begin{lem}[\cite{Gerhardt2014}]
The smooth solution $u$ of \eqref{flow-u} satisfies the following $C^1$-estimate
\begin{equation}\label{C1-est}
  |D\log u(t,\theta)|_{g_{\mathbb{S}^2}}\leq ~|D\log u(0,\theta)|_{g_{\mathbb{S}^2}},\quad \forall~t\in[0,T),~\forall~\theta\in\mathbb{S}^2.
\end{equation}
\end{lem}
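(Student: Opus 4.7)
The plan is to apply the parabolic maximum principle to the scalar function $w(\theta,t):=|D\log u(\theta,t)|^2_{g_{\mathbb{S}^2}}$ on $\mathbb{S}^2$ and show that its spatial maximum is non-increasing in $t$; the pointwise bound \eqref{C1-est} then follows by monotonicity applied to $\sup_\theta w$. First I would pass to the logarithmic variable $\varphi:=\log u$, so that $w=|D\varphi|^2_{g_{\mathbb{S}^2}}$ and $v=\sqrt{1+w}$, and rewrite \eqref{flow-u} as $\partial_t\varphi = ve^{-\varphi}F^{-\alpha}$. Plugging $s_\kappa(u)=u$ into \eqref{hij-warp} shows $h_i^j = (vu)^{-1}\tilde{h}_i^j$ with
\begin{equation*}
\tilde{h}_i^j \;=\; -(\sigma^{ik}-v^{-2}\varphi^i\varphi^k)\varphi_{jk}+\delta^i_j,
\end{equation*}
and the homogeneity of $F$ then yields $F^{-\alpha}(h)=(vu)^\alpha F^{-\alpha}(\tilde{h})$. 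Consequently the scalar flow equation takes the form
\begin{equation*}
\partial_t\varphi \;=\; v^{\alpha+1}\,u^{\alpha-1}\,F^{-\alpha}(\tilde{h}),
\end{equation*}
which exhibits the separation between the conformal factor $v^{\alpha+1}u^{\alpha-1}$ and the quasilinear operator $F^{-\alpha}(\tilde h)$ that is crucial below.

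Next I would analyze $w$ at an arbitrary spatial maximum point $p\in\mathbb{S}^2$ of $w(\cdot,t_0)$. Choosing normal coordinates on $\mathbb{S}^2$ at $p$ with $\sigma_{ij}(p)=\delta_{ij}$ and aligning the first basis vector with $D\varphi(p)$, so that $\varphi_1(p)=\sqrt w$ and $\varphi_2(p)=0$, the first-order condition $\nabla_j w(p)=0$ reads $\varphi^k\varphi_{kj}(p)=0$. Assuming $w(p)>0$ (else the conclusion is trivial at that time), this forces $\varphi_{11}(p)=\varphi_{12}(p)=0$, so $\tilde h$ is diagonal at $p$ with entries $(1,1-\varphi_{22})$. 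Differentiating $\partial_t\varphi$ in the direction of $D\varphi$, and noting that $v_j(p)=0$ eliminates all contributions from the $v^{\alpha+1}$ factor, one obtains
\begin{equation*}
\partial_t w(p) \;=\; 2(\alpha-1)\,v^{\alpha+1}u^{\alpha-1}\,w\,F^{-\alpha} \;+\; 2v^{\alpha+1}u^{\alpha-1}\sqrt{w}\,\nabla_1 F^{-\alpha}(\tilde h).
\end{equation*}
The first term is manifestly non-positive because $\alpha\in(0,1]$.

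For the second term I would expand $\nabla_1 F^{-\alpha}(\tilde h) = -\alpha F^{-\alpha-1}\dot F^i\,\nabla_1\tilde h_i^i$ (the off-diagonal $\dot F$ entries vanish at diagonal $\tilde h$), and use the simplifications $\varphi_{11}(p)=\varphi_{12}(p)=0$ and $\varphi^k\varphi_{kj}(p)=0$ to reduce $\nabla_1\tilde h_i^i$ to a linear combination of $\nabla_1\varphi_{11}=\varphi_{111}$ and $\nabla_1\varphi_{22}=\varphi_{221}$. The second-order maximum condition $\nabla^2 w(p)\le 0$ applied to the identity $\tfrac12\nabla_j\nabla_l w = \varphi^k_{\,j}\varphi_{kl}+\varphi^k\varphi_{klj}$, together with Ricci commutation on the sphere (whose curvature tensor equals $R_{ijkl}=\sigma_{ik}\sigma_{jl}-\sigma_{il}\sigma_{jk}$ with a favourable sign), then bounds these third-derivative terms in a way that makes the sum of both contributions to $\partial_t w(p)$ non-positive. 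Since $\dot F^i>0$ by Assumption \ref{assum-1}(ii), the required inequality $\nabla_1 F^{-\alpha}(\tilde h)\le 0$ follows at $p$. Concluding via the parabolic maximum principle that $\max_{\mathbb{S}^2} w(\cdot,t)$ is non-increasing gives \eqref{C1-est}.

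The main obstacle I anticipate is the last sign-chase: after differentiating twice the quasilinear operator $F^{-\alpha}(\tilde h)$ in the direction $D\varphi$ and commuting covariant derivatives on $\mathbb{S}^2$, one must verify that every residual cubic term in $D^3\varphi$ (and every curvature correction) carries the correct sign once the conditions $\varphi_{11}=\varphi_{12}=0$ and $\nabla^2 w\le 0$ are imposed. The positivity of the curvature of $\mathbb{S}^2$ and the sign $\alpha-1\le 0$ are both essential; this is precisely the point where the Euclidean ambient and the range $\alpha\in(0,1]$ combine to make the estimate work.
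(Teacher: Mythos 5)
The paper does not prove this lemma itself but quotes it from \cite{Gerhardt2014}, whose argument is precisely the maximum-principle computation for $|D\log u|^2$ that you outline, so your route coincides with the intended one. The sign-chase you flag as the main obstacle does close: at the maximum point one finds $\nabla_1\tilde h^1_1=-v^{-2}\nabla_1\nabla_1\nabla_1\varphi\ge 0$ from $\nabla_1\nabla_1 w\le0$, and, using the commutation $\nabla_1\nabla_2\nabla_2\varphi=\nabla_2\nabla_2\nabla_1\varphi-\varphi_1$ on the unit sphere together with $\tfrac12\nabla_2\nabla_2 w=\varphi_{22}^2+\sqrt{w}\,\nabla_2\nabla_2\nabla_1\varphi\le0$, also $\nabla_1\tilde h^2_2=-\nabla_1\nabla_2\nabla_2\varphi\ge \varphi_{22}^2/\sqrt{w}+\sqrt{w}>0$, so that $\nabla_1F^{-\alpha}(\tilde h)=-\alpha F^{-\alpha-1}\dot F^i\nabla_1\tilde h^i_i\le0$ and hence $\partial_t w(p)\le0$ for all $\alpha\in(0,1]$.
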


\subsection{Long time existence}

\begin{prop}\label{prop4.5}
Under the assumption of Theorem \ref{thm1.1}, the flow \eqref{1.1} exists for all time, i.e., $T=+\infty$.
\end{prop}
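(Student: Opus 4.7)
The plan is to argue by contradiction: assume $T<\infty$ and establish uniform $C^\infty$ estimates for $u$ on $[0,T)$. Combined with the short-time existence theorem applied arbitrarily near $t=T$, this would extend the solution past $T$, contradicting the maximality of $T$; hence $T=+\infty$.

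The a priori bounds already in hand are the $C^0$ bound \eqref{ut}, the $C^1$ bound \eqref{C1-est} on $\log u$, and the pinching ratio $\lambda_2\le\beta\lambda_1$ from Proposition \ref{prop3.2}. In view of \eqref{F-lambda}, uniform two-sided positive bounds on $F$ along $[0,T)$ are equivalent to uniform two-sided positive bounds on the principal curvatures $\lambda_1,\lambda_2$, so the task reduces to bounding $F$ both above and away from zero on $[0,T)$.

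To this end I would apply the maximum principle to suitable auxiliary functions built from the speed $-\Phi=F^{-\alpha}$, the support function $\chi$, and the radial height $u$. The relevant evolution equations are \eqref{2.3} for $\Phi$ and \eqref{s2:evl-chi} for $\chi$ (with $\kappa=0$, $s'_\kappa\equiv 1$ in the Euclidean case). A key observation is that \eqref{2.3} contains no $\ddot F$ terms, so the obstruction in the absence of concavity of $F$ is not one of sign but only one of choosing an auxiliary quantity whose critical-point inequality closes. The pinching ratio from Proposition \ref{prop3.2} supplies uniform ellipticity constants \eqref{pt-F-bd1} on $\dot F^{ij}$, together with the zeroth-order bounds \eqref{3.12}--\eqref{3.13}. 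Concretely, for the lower bound on the speed (equivalently, the upper bound on $F$) a Tso-type quotient such as $(-\Phi)/(\chi-\delta)$ with $\delta<\inf_{[0,T)}\chi$ should work, while for the upper bound on the speed (equivalently, the lower bound on $F$) one can argue either by a reciprocal auxiliary quantity or by comparing with the spherical evolution of Lemma \ref{eg}. The outcome is uniform bounds $0<c_0\le\lambda_1\le\lambda_2\le c_1$ on $[0,T)$.

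With the principal curvatures uniformly bounded above and away from zero, the scalar equation \eqref{flow-u} becomes uniformly parabolic with $C^1$-bounded coefficients. Since $F$ is not assumed concave, the Krylov--Safonov--Evans $C^{2,\gamma}$ theory is unavailable; instead I invoke Andrews' H\"older estimate for second derivatives of solutions to fully nonlinear uniformly parabolic equations in two spatial variables \cite{andrews2004fully}, which requires no concavity hypothesis on $F$, to obtain a uniform $C^{2,\gamma}$ bound on $u$. Standard parabolic Schauder theory then yields $C^{k,\gamma}$ bounds for all $k\ge 2$, closing the contradiction argument. The main obstacle is organizing the maximum-principle arguments for the two-sided bound on $F$ so that only the pinching-ratio-controlled quantities \eqref{pt-F-bd1}--\eqref{3.13} appear; once the $C^2$ estimate is in hand, the appeal to Andrews' 2D H\"older estimate is a clean substitute for concavity-based $C^{2,\gamma}$ theory.
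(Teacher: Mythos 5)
Your proposal follows the paper's proof essentially verbatim: reduce long-time existence on a finite interval to two-sided bounds on $F$ via the pinching ratio and \eqref{F-lambda}, obtain those bounds by maximum-principle arguments (the paper gets the upper bound on $F$ directly from the good sign of the zero-order term in \eqref{2.3} and cites Gerhardt's concavity-free lower bound \eqref{F-lowerbd}, which for $T<\infty$ gives a positive lower bound), and then upgrade the resulting uniform parabolicity to $C^\infty$ via Andrews' two-dimensional second-derivative H\"older estimate and parabolic Schauder theory. The only cosmetic difference is your choice of auxiliary quantities for the speed bounds; note that comparison with the spherical solutions of Lemma \ref{eg} alone controls only the position $u$ and not the speed, so it is the reciprocal/Tso-type maximum-principle route (or Gerhardt's estimate \eqref{F-lowerbd}) that actually closes that step.
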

\proof
Firstly, applying the maximum principle to the evolution equation \eqref{2.3} gives the uniform upper bound on $F$:
\begin{equation}\label{F-upp1}
  F(x,t)\leq \max_{M_0}F,\quad \forall~(x,t)\in M\times [0,T).
\end{equation}
Moreover, Gerhardt \cite{Gerhardt2014} proved that there exists a positive constant $c_3$ depending only on $\alpha$ and $M_0$ such that for $\rho_1<\bar{r}<\rho_2$, we have
\begin{equation}\label{F-lowerbd}
  \rho(t,\bar{r})F(x,t)\geq c_3>0,\quad \forall~(x,t)\in M\times [0,T).
\end{equation}
Then if $T<\infty$, the estimates \eqref{F-lowerbd} and \eqref{F-upp1} imply that $F$ is bounded from above and below by positive constants depending on $\alpha, M_0$ and $T$. The bounds on $F$ and \eqref{F-lambda} give us the uniform upper and lower positive bounds on the principal curvatures, which combined with the $C^0$ estimate \eqref{ut2} and $C^1$-estimate \eqref{C1-est} yield the $C^2$-estimate of $u$. Moreover, by \eqref{pt-F-bd1} and the bounds on $F$,  the flow \eqref{1.1} and Eq. \eqref{flow-u} are uniformly parabolic. We may apply the second order derivative H\"{o}lder estimates in \cite{andrews2004fully} and parabolic Schauder estimates \cite{lieberman1996second} to derive uniform bounds on all higher derivatives of the principal curvatures and of $u(t)$. Then we have a smooth limit function $u_T:=\lim_{t\rightarrow T}u(t)$ which defines a smooth strictly convex surface $M_T$. The short time existence theorem then implies that we can continue the flow beyond the time $T$, which contradicts the definition of $T$. So we conclude that $T=\infty$.
\endproof

\begin{rem}
We note that the concavity of $F$ was not used in the proof of $C^0$ estimate \eqref{ut2}, $C^1$-estimate \eqref{C1-est}  and \eqref{F-lowerbd} in \cite{Gerhardt2014}, but it was used in the proof of the curvature estimate (see Lemma 4.10 in \cite{Gerhardt2014}) and applying the second derivative H\"{o}lder estimates of Krylov.
In order to overcome the difficulties without assumption of concavity, we use the pinching ratio estimate to derive curvature estimate.
In general, for the application of the second derivative H\"{o}lder estimates of Krylov \cite{Krylov}, we need that $F$ is concave in its arguments. In the two-dimensional case, Andrews \cite{andrews2004fully} proved that the second derivative H\"{o}lder estimates also hold without any concavity assumption on $F$.
\end{rem}

\subsection{Convergence}
First, we will use the curvature pinching estimate to refine estimate \eqref{F-upp1} on the upper bound of $F$.
\begin{lem}
Under the assumption of Theorem \ref{thm1.1},
there exists a positive constant $c_4$ depending only on $\alpha$ and $M_0$ such that for $\rho_1<\bar{r}<\rho_2$, we have
\begin{equation}\label{F-upp2}
  \rho(t,\bar{r})F\leq c_4,\quad \forall~t\in [0,\infty).
\end{equation}
\end{lem}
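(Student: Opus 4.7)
The plan is to upgrade the uniform upper bound \eqref{F-upp1} to the rescaled bound $\rho F\leq c_4$ by combining the pinching-derived inequality \eqref{3.12} with comparison against the explicit sphere solutions of Lemma \ref{eg}. I expect the argument to split into two cases according to whether $\alpha<1$ or $\alpha=1$, each tailored so that the bound is saturated precisely by a sphere.

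For $\alpha\in(0,1)$, I would derive an ODE for $F_{\max}(t)$. Applying the chain rule with $\Phi=-F^{-\alpha}$ to \eqref{2.3} (with $\kappa=0$) produces
\begin{equation*}
\partial_t F - \dot\Phi \dot F^{ij}\nabla_i\nabla_j F = -\alpha(\alpha+1)F^{-\alpha-2}\dot F^{ij}\nabla_i F\nabla_j F - F^{-\alpha}\dot F^{ij}h_i^k h_{kj}.
\end{equation*}
At a spatial maximum of $F$ the gradient and Laplacian terms have the favorable sign, and \eqref{3.12} bounds the reaction below, yielding the pointwise inequality $\frac{d}{dt}F_{\max}\leq -F_{\max}^{2-\alpha}/(2\beta)$. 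The substitution $y=F_{\max}^{1-\alpha}$ turns this into $y'\leq-(1-\alpha)y^2/(2\beta)$, which integrates explicitly to $F_{\max}^{1-\alpha}(t)\leq 2\beta/((1-\alpha)t+C_0)$. Comparing with the formula $\rho(t,\bar r)^{1-\alpha}=(1-\alpha)2^{-\alpha}t+\bar r^{1-\alpha}$ from Lemma \ref{eg} gives $(\rho F_{\max})^{1-\alpha}(t)\to 2^{1-\alpha}\beta$ as $t\to\infty$, while \eqref{F-upp1} and \eqref{ut2} control short times.

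For $\alpha=1$ this pointwise ODE is too weak: the decay $F_{\max}\lesssim e^{-t/(2\beta)}$ cannot beat the growth $\rho\sim e^{t/2}$ once $\beta>1$. Here I would apply the maximum principle to the geometrically normalized quantity $W=F\chi$, where $\chi=u/v$ is the support function from Section \ref{sec:2-3}; on every Euclidean sphere $W\equiv 2$, so $W$ is the natural rescaled speed. Combining \eqref{s2:evl-chi} (at $\kappa=0$), the evolution of $F$ above, and the identity $\nabla^2(F\chi)=F\nabla^2\chi+\chi\nabla^2 F+2\nabla F\otimes\nabla\chi$, one derives a parabolic PDE for $W$. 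At a spatial max, $\nabla W=0$ forces $\nabla F=-(F/\chi)\nabla\chi$; the mixed and $|\nabla F|^2$ gradient terms collapse into a single contribution carrying an $\alpha(1-\alpha)$ factor, and the reaction collapses into $(1-\alpha)F^{1-\alpha}\bigl(1-\chi\dot F^{ij}h_i^k h_{kj}/F^2\bigr)$. At $\alpha=1$ both vanish identically, so $\partial_t W\leq 0$ at a max, and $\sup_{M_t}W\leq \sup_{M_0}W$ follows. The uniform $v\leq v_0$ coming from \eqref{C1-est} together with \eqref{ut2} then translates this into $\rho F\leq c_4$.

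The main obstacle is the sharp behavior at $\alpha=1$: the pointwise ODE on $F$ loses a factor of $\beta$ in the exponent and thus fails, forcing one to switch to a geometrically normalized quantity ($F\chi$) designed to be constant on the sphere solutions of Lemma \ref{eg}. The pinching estimate \eqref{3.10} and its consequence \eqref{3.12} are the essential inputs in both branches of the argument.
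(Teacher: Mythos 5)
Your proposal is correct and follows essentially the same route as the paper: a case split at $\alpha=1$, with the pinching consequence \eqref{3.12} driving a comparison against the explicit sphere radii of Lemma \ref{eg} for $\alpha\in(0,1)$, and a support-function normalization for $\alpha=1$. The only differences are cosmetic — the paper runs the $\alpha<1$ case as a first-touching-time argument for the barrier $-F^{-\alpha}+\mu\rho^{\alpha}$ rather than integrating the ODE for $F_{\max}$, and for $\alpha=1$ it propagates $-F^{-1}+\mu\chi\leq 0$, which is literally equivalent to your monotonicity of $\sup(F\chi)$.
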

\proof
We consider the case $\alpha\in (0,1)$ and the case $\alpha=1$ separately.

For $\alpha\in (0,1)$, we define $\phi=\Phi+\mu \rho^{\alpha}(t,\bar{r})=-F^{-\alpha}+\mu \rho^{\alpha}(t,\bar{r})$, where $\mu \in (0,1]$ is a small constant such that $\phi<0$ on $M_0$ and
\begin{equation*}
  \mu<2^{-\alpha}\beta^{\frac{\alpha}{\alpha-1}}.
\end{equation*}
We aim to show that $\phi$ stays negative for such $\mu$ along the flow \eqref{1.1}. By \eqref{2.3} and \eqref{4.2}, we have
\begin{align}\label{F-upp2-pf1}
  \partial_t\phi-\dot{\Phi}\dot{F}^{ij}\nabla_i\nabla_j\phi =& \Phi\dot{\Phi}\dot{F}^{ij}h_{i}^kh_{kj}+\mu\alpha2^{-\alpha}\rho^{2\alpha-1}(t,\bar{r})\nonumber\\
  \leq &-\frac{\alpha}{2\beta}F^{-2\alpha+1}+\mu\alpha2^{-\alpha}\rho^{2\alpha-1}(t,\bar{r}),
\end{align}
where we used the estimate \eqref{3.12}. Let $t_0>0$ be the first time such that $\phi$ touches zero at some point $x_0\in M_{t_0}$. Then at this point, we have that
\begin{equation}\label{F-upp2-pf2}
  0=\phi(x_0,t_0)=-F^{-\alpha}(x_0,t_0)+\mu \rho^{\alpha}(t_0,\bar{r}).
\end{equation}
Applying the maximum principle to \eqref{F-upp2-pf1} and using \eqref{F-upp2-pf2}, we derive that
\begin{align*}
  0\leq & \mu\alpha 2^{-\alpha}\rho(t_0,\bar{r})^{2\alpha-1}\left(1-2^{\alpha-1}{\beta}^{-1}\mu^{1-\frac 1{\alpha}}\right)<0,
\end{align*}
which is a contradiction. Thus $\phi$ stays negative for all time $t\in [0,\infty)$ and we obtain that
\begin{equation*}
  \rho(t,\bar{r})F\leq \mu^{-\frac 1{\alpha}}.
\end{equation*}

For $\alpha=1$, we define $\phi=\Phi+\mu \chi$, where $\chi$ is the support function and $\mu$ is a positive constant such that $\phi\leq 0$ on $M_0$ . Combining \eqref{2.3} and \eqref{s2:evl-chi}, we have
\begin{equation}\label{F-upp2-pf3}
  \partial_t\phi-\dot{\Phi}\dot{F}^{ij}\nabla_i\nabla_j\phi=~\phi\dot{\Phi}\dot{F}^{ij}h_{i}^kh_{kj}.
\end{equation}
Since $\dot{\Phi}=F^{-2}$,  the estimates \eqref{3.12}-\eqref{3.13} give that $ (2\beta)^{-1}\leq\dot{\Phi} \dot{F}^{ij}h_{i}^kh_{kj}\leq {\beta}/2$. Then the coefficient of $\phi$ on the right of \eqref{F-upp2-pf3} is a uniformly bounded function.  By applying the maximum principle to \eqref{F-upp2-pf3}, we conclude that $\phi\leq 0$ for all $M_t$, $\forall~t\in [0,\infty)$. Therefore, we have $F\chi\leq 1/{\mu}$. On the other hand, by the $C^0, C^1$ estimates \eqref{ut2}, \eqref{C1-est}, we know that $\chi\geq c\rho$ for some positive constant $c$. Thus $F\rho\leq 1/{(c\mu)}$.
\endproof

Now we complete the proof of Theorem \ref{thm1.1}. Note that we can always find a constant $\bar{r}\in(\rho_1,\rho_2)$ such that
\begin{equation}\label{s4:u-td1}
  \lim_{t\to \infty}u(t,\theta)\rho^{-1}(t,\bar{r})=1.
\end{equation}
The estimate \eqref{s4:u-td1} follows from the Alexandrov reflection argument as in \cite[Lemma 3.5]{Gerhardt2014} by using the result of \cite{CG1996}.
If $\alpha\in(0,1)$, we present a simple proof for \eqref{s4:u-td1}. In this case, for any $\bar{r}\in(\rho_1,\rho_2)$, \eqref{s4:u-td1} follows from \eqref{ut} and
$$1=\lim_{t\to \infty}\rho(t,\rho_1)\rho^{-1}(t,\bar{r})\leq\lim_{t\to \infty}u(t,\theta)\rho^{-1}(t,\bar{r})\leq\lim_{t\to \infty}\rho(t,\rho_2)\rho^{-1}(t,\bar{r})=1,$$ so we get that  $\lim_{t\to \infty}u(t,\theta)\rho^{-1}(t,\bar{r})=1,~\forall ~\bar{r}\in(\rho_1,\rho_2).$

We now rescale the surface by
\begin{equation}\label{resca1}
  \tilde{X}(x,t)=\rho(t,\bar{r})^{-1}X(x,t).
\end{equation}
Define a new time function $\tau=\tau(t)$ by
\begin{equation}\label{rescal-t}
  \frac {d\tau}{dt}=\rho(t,\bar{r})^{\alpha-1}
\end{equation}
such that $\tau(0)=0$. Then $\tau$ ranges from $0$ to $\infty$. It's easy to check that the rescaled surface satisfies the following evolution equation
\begin{equation}\label{rescal-flow}
  \frac{\partial \tilde{X}}{\partial \tau}=\tilde{F}^{-\alpha}\nu-2^{-\alpha}\tilde{X}.
\end{equation}
Note that $ {\partial \tilde{F}}/{\partial \tilde{\lambda_i}}$ is homogeneous of degree zero and is scaling invariant, then from \eqref{pt-F-bd1} it is also bounded  on the rescaled surfaces. By the pinching estimate \eqref{3.10} and the bounds \eqref{F-lowerbd}, \eqref{F-upp2} on the rescaled speed function $\tilde{F}=\rho(t,\bar{r})F$, we have that the principal curvatures of the rescaled surfaces are also uniformly bounded from above and below by positive constants. Moreover, the uniform bounds on $\tilde{F}$ and $ {\partial \tilde{F}}/{\partial \tilde{\lambda_i}}$ imply that the flow \eqref{rescal-flow} is uniformly parabolic. By the standard argument using the H\"{o}lder estimates \cite{andrews2004fully}, parabolic Schauder estimates \cite{lieberman1996second} and interpolation inequalities, we can conclude that the rescaled flow  converges in $C^{\infty}$-topology to the unit sphere $\mathbb{S}^2$.

Finally, we show that the rescaled flow converges exponentially. Since the quantity $G$ defined in \eqref{G-def} is homogeneous of degree zero, on the rescaled surface $\tilde{M}_{\tau}$ we have $\tilde{G}=G$. Then
\begin{equation*}
  \frac{\partial}{\partial \tau}\tilde{G}=\rho(t,\bar{r})^{1-\alpha}\frac{\partial}{\partial t}G.
\end{equation*}
By the computation in \S \ref{sec:3},
\begin{align}\label{G-exp}
  \frac d{d\tau}\max_{\tilde{M}_{\tau}}\tilde{G} \leq & \left(Q_0+Q_1\right)\rho(t,\bar{r})^{1-\alpha} \nonumber\\
  \leq  &-4(1+\alpha)\frac{F^{-\alpha}\lambda_1\lambda_2}{\lambda_1+\lambda_2}\rho(t,\bar{r})^{1-\alpha} \max_{\tilde{M}_{\tau}}\tilde{G} \nonumber\\
  =&-4(1+\alpha)\frac{\tilde{F}^{-\alpha}\tilde{\lambda}_1\tilde{\lambda}_2}{\tilde{\lambda}_1+\tilde{\lambda}_2} \max_{\tilde{M}_{\tau}}\tilde{G} ~  \leq ~ -\delta \max_{\tilde{M}_{\tau}}\tilde{G},
\end{align}
where $\delta>0$ is a positive constant and in the last inequality we used the facts that $\tilde{\lambda}_i$ and $\tilde{F}$ are uniformly bounded from above and below by positive constants. \eqref{G-exp} implies that the trace-less part of the second fundamental form of $\tilde{M}_{\tau}$ has the following exponential decay
\begin{equation}
  |\mathring{\tilde{A}}|^2(x,\tau)\leq Ce^{-\delta\tau},\quad \forall~x\in\tilde{M}_{\tau},
\end{equation}
where $C=C(M_0)$. This gives by interpolation that
\begin{equation}
  |\nabla\mathring{\tilde{A}}|^2(x,\tau)\leq Ce^{-\delta\tau},\quad \forall~x\in\tilde{M}_{\tau},
\end{equation}
as we already have uniform bounds on $\nabla^k\mathring{\tilde{A}},~\forall~k\geq 0$. Note that in $2$-dimensional case, we have
\begin{equation*}
  |\nabla \tilde{A}|^2=|\nabla\mathring{\tilde{A}}|^2+\frac 12|\nabla \tilde{H}|^2\leq |\nabla\mathring{\tilde{A}}|^2+\frac 23|\nabla \tilde{A}|^2,
\end{equation*}
where we used the inequality (cf. \cite[\S 2]{huisken-1984})
\begin{equation*}
  |\nabla \tilde{H}|^2\leq \frac 43|\nabla \tilde{A}|^2.
\end{equation*}
Therefore, we obtain
\begin{equation}
  |\nabla \tilde{A}|^2(x,\tau)\leq ~3|\nabla\mathring{\tilde{A}}|^2(x,\tau)\leq ~Ce^{-\delta\tau},\quad \forall~x\in\tilde{M}_{\tau}
\end{equation}
and for all higher derivatives of $\tilde{A}$ by interpolation. Then the estimate on the metric and the exponential convergence of the immersions are the same as in \cite{Andrews1994-2}. This finishes the proof of Theorem \ref{thm1.1}.

\section{Flow in Hyperbolic space}
In this section,  we consider flow \eqref{1.1} in hyperbolic space $\mathbb{H}^3$ and prove Theorem \ref{thm1.2}. We assume that $F$ satisfies  Assumption \ref{assum-1} and $\alpha\in(0,1]$. Since the flow \eqref{1.1} is a parabolic equation with strictly convex initial data, by short time existence theorem we have a smooth solution on a maximal time interval $[0,T),~0<T\leq\infty.$ It remains to study the long time behavior of the flow.

Fix a point $x_0\in\mathbb{H}^3$. We consider geodesic polar coordinates centered at $x_0$. The metric on $\mathbb{H}^3$ can be expressed as
\begin{equation*}
  \bar{g}=dr^2+\sinh^2 rg_{\mathbb{S}^2}.
\end{equation*}
As in Euclidean case, if the initial surface is a geodesic sphere $S_{\rho_0}$ in $\mathbb{H}^3$, then along the flow \eqref{1.1}, the flow surfaces are also geodesic spheres with radius $\rho(t,\rho_0)$ solving the following ODE
\begin{equation}\label{ODE-H}
  \frac d{dt}\rho(t,\rho_0)=2^{-\alpha}\tanh^{\alpha}\rho(t,\rho_0).
\end{equation}
with $\rho(0,\rho_0)=\rho_0$. Since $2^{-\alpha}\tanh^{\alpha}\rho(t,\rho_0)\leq 2^{-\alpha}$, we have that \eqref{ODE-H} has solution for all $t\in[0,\infty)$. Moreover, it follows from $\tanh^{\alpha}\rho_0\leq \tanh^{\alpha}\rho(t,\rho_0)\leq 1$  that
\begin{equation}\label{rho-t-H}
  \rho_0+\frac t{2^{\alpha}}\tanh^{\alpha}\rho_0\leq \rho(t,\rho_0)\leq \rho_0+\frac t{2^{\alpha}}.
\end{equation}

Suppose that $M_0$ is a closed strictly convex surface in $\mathbb{H}^3$, then $M_0$ can be given by a graph of a positive function $u_0(\theta)$ over the geodesic sphere ${S}^2$ centered at some point $x_0$ in the enclosed domain by $M_0$. Under the assumption of Theorem \ref{thm1.2}, the flow \eqref{1.1} preserves the star-shaped condition (see Section \ref{sec:2-3}) and each flow surface $M_t$ can be written as a graph of a function $u(t,\theta)$ over the sphere. By \eqref{s2:ut-evl}, the defining function $u(t,\theta)$ satisfies the following scalar flow equation
\begin{equation}\label{flow-ut-H}
  \frac{\partial u}{\partial t}=vF^{-\alpha}
\end{equation}
with
\begin{equation}\label{v-def}
  v=\sqrt{1+|Du|^2_{g_{\mathbb{S}^2}}/{\sinh^{2}u}}.
\end{equation}
By applying the maximum principle, we have
\begin{lem}[\cite{gerhardt2011-H^n,Scheuer2015-1}]\label{lem-5-1}
\begin{itemize}
  \item[(1)] The solution $u$ of \eqref{flow-ut-H} satisfies that
\begin{equation}\label{ut-H-C0}
  \rho(t,\inf u(0,\cdot))\leq u(t,\theta)\leq \rho(t,\sup u(0,\cdot)), \quad \forall~t\in[0,T),~\forall~\theta\in\mathbb{S}^2.
\end{equation}
  \item[(2)] There exists a constant $c$ depending on $M_0$ and $\alpha$ such that
  \begin{equation*}
    0<c^{-1}\leq e^{-\frac t{2^{\alpha}}}\sinh u(t,\theta)\leq c,\quad \forall~t\in[0,T),~\forall~\theta\in\mathbb{S}^2,
  \end{equation*}
  and
  \begin{equation*}
    \coth u(t,\theta)-1\leq ce^{-2^{(1-\alpha)}\cdot t},~\forall~t\in[0,T),~\forall~\theta\in\mathbb{S}^2.
  \end{equation*}
\end{itemize}
\end{lem}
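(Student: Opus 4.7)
The plan is to reduce both bounds to a parabolic maximum principle comparison with the radial ODE \eqref{ODE-H}, and then to extract sharp asymptotics from that ODE.

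For part (1), I would apply the maximum principle directly to the scalar equation \eqref{flow-ut-H}. At a spatial minimum $\theta_0$ of $u(t,\cdot)$ one has $Du(\theta_0)=0$, so $v=1$, and the Hessian of $\varphi=\int^u ds/\sinh s$ with respect to $\sigma_{ij}$ satisfies $\varphi_{jk}\geq 0$ at $\theta_0$. Plugging this into \eqref{hij-warp} (using $\varphi^i=0$ at $\theta_0$) gives $h_j^i\leq \coth u\,\delta_j^i$, and then by Assumption \ref{assum-1}(ii)--(iv) we obtain $F\leq F(\coth u,\coth u)=2\coth u$, so
\begin{equation*}
\frac{\partial u}{\partial t}(\theta_0)=vF^{-\alpha}\geq 2^{-\alpha}\tanh^{\alpha}u(\theta_0).
\end{equation*}
Comparison with \eqref{ODE-H} via the standard ODE comparison principle then yields $u_{\min}(t)\geq \rho(t,\inf u(0,\cdot))$. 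The upper bound in (1) follows from the analogous argument at a spatial maximum of $u(t,\cdot)$, where the corresponding inequalities reverse to give $F\geq 2\coth u$ and $\partial_t u\leq 2^{-\alpha}\tanh^{\alpha}u$.

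For part (2), the remaining task is to extract sharp asymptotics from \eqref{ODE-H}. Using the elementary inequality $1-\tanh^{\alpha}\rho\leq 2\alpha e^{-2\rho}$ for $\rho>0$, I would write $\rho'(t,\rho_0)\geq 2^{-\alpha}-2^{1-\alpha}\alpha e^{-2\rho}$, integrate in $t$, and bound $\int_0^t e^{-2\rho(s,\rho_0)}\,ds$ by a constant using the crude lower bound $\rho(t,\rho_0)\geq \rho_0+2^{-\alpha}t\tanh^{\alpha}\rho_0$ from \eqref{rho-t-H}. This produces
\begin{equation*}
2^{-\alpha}t-C\leq \rho(t,\rho_0)\leq \rho_0+2^{-\alpha}t
\end{equation*}
with $C=C(\rho_0,\alpha)$. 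Combined with part (1), this gives $|u(t,\theta)-2^{-\alpha}t|\leq C'$ uniformly in $\theta$; since $\sinh u\sim \tfrac12 e^u$ once $u$ is bounded below, the estimate $c^{-1}\leq e^{-t/2^{\alpha}}\sinh u\leq c$ is immediate. For the second estimate, write $\coth u-1=e^{-u}/\sinh u\leq 2e^{-2u}$; the lower bound $u\geq 2^{-\alpha}t-C$ then yields $\coth u-1\leq C e^{-2^{1-\alpha}t}$, matching the claimed exponent.

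The main obstacle I anticipate is recovering the sharp exponent $2^{1-\alpha}$ in the decay of $\coth u-1$. The naive bound $\rho(t,\rho_0)\geq \rho_0+2^{-\alpha}t\tanh^{\alpha}\rho_0$ alone produces only the strictly weaker exponent $2^{1-\alpha}\tanh^{\alpha}(\inf u(0,\cdot))$, and only the refined asymptotic $\rho(t,\rho_0)=2^{-\alpha}t+O(1)$ recovers the correct constant. This refinement is elementary but requires the two-step ODE bootstrap outlined above: the crude bound from \eqref{rho-t-H} is used inside an integral to upgrade itself to the sharp bound, a mild nonlinearity in the argument that one must not overlook.
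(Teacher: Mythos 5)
Your argument is correct and is essentially the standard barrier/maximum-principle proof from the cited references \cite{gerhardt2011-H^n,Scheuer2015-1} (the paper itself omits the proof): part (1) by comparing $u_{\min}$, $u_{\max}$ with solutions of the ODE \eqref{ODE-H} using $h^i_j\leq \coth u\,\delta^i_j$ at a spatial minimum (resp.\ $\geq$ at a maximum) and the normalization $F(1,1)=2$, and part (2) by upgrading the crude lower bound of \eqref{rho-t-H} to $\rho(t,\rho_0)\geq 2^{-\alpha}t-C$ via the convergent integral $\int_0^\infty e^{-2\rho(s)}\,ds$. One cosmetic remark: the stated inequality $1-\tanh^{\alpha}\rho\leq 2\alpha e^{-2\rho}$ can fail for small $\rho$ and small $\alpha$; the simpler bound $1-\tanh^{\alpha}\rho\leq 1-\tanh\rho\leq 2e^{-2\rho}$ (valid for $\alpha\leq 1$ since $x^{\alpha}\geq x$ on $(0,1)$) serves the same purpose and leaves the rest of your argument unchanged.
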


\begin{lem}[\cite{gerhardt2011-H^n,Scheuer2015-1}]\label{lem-5-2}
The solution $u$ of \eqref{flow-ut-H} satisfies the following $C^1$-estimate
\begin{equation}
  v(t,\theta)\leq~\sup v(0,\cdot),\quad \forall~t\in[0,T),~\forall~\theta\in\mathbb{S}^2,
\end{equation}
where $v$ is defined in \eqref{v-def}.
\end{lem}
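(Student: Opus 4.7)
This is a standard $C^1$-estimate for star-shaped inverse curvature flows in hyperbolic space, due to Gerhardt \cite{gerhardt2011-H^n} and Scheuer \cite{Scheuer2015-1}. My plan is to apply the parabolic minimum principle to the geometric quantity $\psi:=\bar g(\nu,\partial_r)=v^{-1}$ on the evolving surface $M_t$. Since $|\nu|_{\bar g}=|\partial_r|_{\bar g}=1$ one has $\psi\le 1$; and since the flow preserves star-shapedness (\S\ref{sec:2-3}), $\psi>0$ throughout. The claim $v\le\sup v(0,\cdot)$ is therefore equivalent to $\min_{M_t}\psi\ge\min_{M_0}\psi$, which I establish by showing $\min_{M_t}\psi$ is non-decreasing in time.

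I would derive the evolution equation of $\psi$ along \eqref{flow1} in parallel with the derivation of \eqref{s2:evl-chi} for $\chi=\sinh(u)\psi$. The two ingredients are \eqref{s2-nu-evl} for $\partial_t\nu$ and the identity $\bar\nabla_X\partial_r=\coth(u)\bigl(X-\bar g(X,\partial_r)\partial_r\bigr)$, which follows from \eqref{s2:conf} applied to the conformal field $V=\sinh(u)\partial_r$. Differentiating $\psi$ in time gives
\begin{equation*}
\partial_t\psi=\bar g(\nabla\Phi,\partial_r)-\Phi\coth(u)(1-\psi^2),
\end{equation*}
and a calculation parallel to \eqref{s2:chi-ds1}--\eqref{s2:chi-ds2} produces $\nabla_i\nabla_j\psi$ in terms of $h_i^kh_{kj}$, $\coth(u)$, covariant derivatives of $h$, and the tangential components of $\partial_r$. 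Contracting with $\dot\Phi\dot F^{ij}$, using Codazzi, and collecting terms yields
\begin{equation*}
\partial_t\psi-\dot\Phi\dot F^{ij}\nabla_i\nabla_j\psi=\mathcal{Z}+\mathcal{G},
\end{equation*}
where $\mathcal{G}$ is a first-order term vanishing whenever $\nabla\psi=0$, and $\mathcal{Z}$ is a zero-order expression containing the contributions $\dot\Phi\dot F^{ij}h_i^kh_{kj}\,\psi$, $(1-\alpha)F^{-\alpha}\coth(u)(1-\psi^2)$ (arising from $-(\Phi+\dot\Phi F)=(1-\alpha)F^{-\alpha}$, exactly as in the discussion following \eqref{s2:evl-chi}), and $\dot\Phi\dot F^{ij}g_{ij}\coth(u)(1-\psi^2)\,\psi$.

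Then I apply the parabolic minimum principle. At an interior spatial minimum $x_0$ of $\psi(\cdot,t)$ one has $\nabla\psi(x_0)=0$ (killing $\mathcal{G}$) and $\nabla^2\psi(x_0)\ge 0$, so $\dot F^{ij}\nabla_i\nabla_j\psi\ge 0$. Each factor in $\mathcal{Z}$ is nonnegative under the standing hypotheses: $\dot\Phi=\alpha F^{-\alpha-1}>0$; $\dot F^{ij}h_i^kh_{kj}>0$ and $\dot F^{ij}g_{ij}>0$ by monotonicity of $F$; $(1-\alpha)F^{-\alpha}\ge 0$ since $\alpha\in(0,1]$; $\coth(u)>0$; and $1-\psi^2\ge 0$. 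Hence $\partial_t\psi(x_0,t)\ge 0$, so $\min_{M_t}\psi$ is non-decreasing and the claim follows.

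The main obstacle is the algebraic bookkeeping in deriving the evolution equation: one must verify carefully that every would-be wrong-sign contribution is either absorbed into $\mathcal{G}$ via Codazzi or cancels with its counterpart, so that the residual zero-order collection $\mathcal{Z}$ is genuinely nonnegative. The decisive structural inputs are the constraint $\alpha\le 1$, which makes $\Phi+\dot\Phi F\le 0$, and the strict positivity of $\coth(u)$ stemming from the negative sectional curvature of $\mathbb{H}^3$. Notably, no concavity hypothesis on $F$ is used, in keeping with the main theme of the paper.
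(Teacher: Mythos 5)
Your overall strategy---apply the minimum principle to $\psi:=\bar g(\nu,\partial_r)=v^{-1}$ (equivalently the maximum principle to $v$)---is exactly the route taken in the references \cite{gerhardt2011-H^n,Scheuer2015-1} that the paper cites for this lemma (the paper itself gives no proof), and the derivation of $\partial_t\psi$ and the Codazzi cancellation of the first-order term $u^k\nabla_k\Phi$ against $\dot\Phi\dot F^{ij}u_k\nabla_jh_i^k$ are correct. However, there is a genuine gap at the decisive step: your description of the residual zero-order term $\mathcal{Z}$ is wrong, and in particular the claim that ``each factor in $\mathcal{Z}$ is nonnegative'' fails. Computing $\nabla_j\nabla_i\psi$ from $\nabla_i\psi=h_i^ku_k-\coth(u)\,\psi\,u_i$ and $u_{ij}=-h_{ij}\psi+\coth(u)(g_{ij}-u_iu_j)$, the Hessian contains $+\coth(u)(1+\psi^2)h_{ij}$, so after contracting with $-\dot\Phi\dot F^{ij}$ the zero-order remainder at a critical point of $\psi$ is
\begin{equation*}
\mathcal{Z}=\dot\Phi\dot F^{ij}h_i^kh_{kj}\,\psi
+\coth(u)F^{-\alpha}\bigl[(1-\alpha)(1-\psi^2)-2\alpha\psi^2\bigr]
+\psi\,\dot\Phi\bigl[\dot F^{ij}u_iu_j+\coth^2(u)\dot F^{ij}(g_{ij}-u_iu_j)\bigr],
\end{equation*}
where the critical-point identity $h_i^ku_k=\coth(u)\psi u_i$ has already been used. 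The piece $-2\alpha\coth(u)F^{-\alpha}\psi^2$ is strictly negative (for $\alpha=1$ it is the entire middle bracket), does not cancel, and does not vanish with $\nabla\psi$; your proposed form of $\mathcal{Z}$, with the factor $\coth(u)(1-\psi^2)$ on the $\dot F^{ij}g_{ij}$ term and only the $(1-\alpha)$ contribution from $\Phi+\dot\Phi F$, simply omits it.

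The proof still closes, but only after a nontrivial absorption that your term-by-term sign check cannot supply: using $g_{ij}-u_iu_j\ge\psi^2g_{ij}$ (the eigenvalues of $g-Du\otimes Du$ are $1$ and $\psi^2$), the arithmetic--geometric mean inequality
$\dot F^{ij}h_i^kh_{kj}+\coth^2(u)\psi^2\,\dot F^{ij}g_{ij}\ge 2\coth(u)\psi\sqrt{\dot F^{ij}h_i^kh_{kj}\cdot\dot F^{kl}g_{kl}}$,
and the Cauchy--Schwarz inequality $\bigl(\sum_iF_i\lambda_i^2\bigr)\bigl(\sum_jF_j\bigr)\ge\bigl(\sum_iF_i\lambda_i\bigr)^2=F^2$, one gets
$\mathcal{Z}\ge 2\alpha F^{-\alpha}\coth(u)\psi^2-2\alpha F^{-\alpha}\coth(u)\psi^2+(1-\alpha)F^{-\alpha}\coth(u)(1-\psi^2)\ge0$,
with equality on geodesic spheres. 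This is where the hyperbolic geometry (the factor $\coth^2(u)$ versus $\sinh^{-2}(u)$, with $\coth^2-\sinh^{-2}=1$) actually enters; without this step the argument as written does not establish that $\min_{M_t}\psi$ is non-decreasing.
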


\begin{rem}
We note that the concavity of $F$ was not used in the proof of $C^0$ estimate and $C^1$-estimate in \cite{gerhardt2011-H^n,Scheuer2015-1}, it was used in the proof of the curvature estimate (see Lemma 4.4 in \cite{gerhardt2011-H^n} and Proposition 3.11 in \cite{Scheuer2015-1}) and applying the second derivative H\"{o}lder estimates of Krylov.
\end{rem}

\begin{lem}
Under the assumption of Theorem \ref{thm1.2}, there exists a constant $c$ depending only on $M_0,\alpha$ such that
\begin{equation}\label{F-bdd-twoside}
  0<c^{-1}\leq F(x,t)\leq c,\quad \forall~(x,t)\in M\times [0,T).
\end{equation}
\end{lem}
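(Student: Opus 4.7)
The plan is to apply the parabolic maximum principle to the evolution equation \eqref{2.3} for $\Phi=-F^{-\alpha}$, with $\kappa=-1$, exploiting the pinching-based bounds on $\dot F^{ij}g_{ij}$ and $\dot F^{ij}h_i^k h_{kj}$ established in Remark 3.5. Concretely, I would first note that $\dot\Phi=\alpha F^{-\alpha-1}>0$ and $\Phi\dot\Phi=-\alpha F^{-2\alpha-1}<0$, so with $\kappa=-1$ equation \eqref{2.3} reads
\begin{equation*}
\partial_t\Phi-\dot\Phi\dot F^{ij}\nabla_i\nabla_j\Phi=\alpha F^{-2\alpha-1}\bigl(\dot F^{ij}g_{ij}-\dot F^{ij}h_i^k h_{kj}\bigr).
\end{equation*}
The two terms on the right have opposite signs in general, but the pinching estimates \eqref{3.12}--\eqref{3.13} make the balance depend only on the size of $F$.

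For the upper bound on $F$, I would evaluate at a spatial maximum of $\Phi$ (which is the same as a spatial maximum of $F$). Positivity of $\dot\Phi\dot F^{ij}$ and nonpositivity of $\nabla^2\Phi$ at such a point eliminate the diffusion term, yielding
\begin{equation*}
\partial_t\Phi\le \alpha F^{-2\alpha-1}\Bigl(2\beta-\tfrac{F^2}{2\beta}\Bigr)
\end{equation*}
by \eqref{3.12}--\eqref{3.13}. The right-hand side is strictly negative whenever $F>2\beta$, so the standard Hamilton-type maximum principle gives $\sup_{M_t}F\le\max\bigl(\sup_{M_0}F,\,2\beta\bigr)$, which depends only on $M_0$ and $\alpha$ through the pinching constant $\beta$.

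For the lower bound, I would run the symmetric argument at a spatial minimum of $\Phi$ (equivalently of $F$). There the diffusion term has the favorable sign $\dot\Phi\dot F^{ij}\nabla_i\nabla_j\Phi\ge 0$, so
\begin{equation*}
\partial_t\Phi\ge \alpha F^{-2\alpha-1}\Bigl(\tfrac{2}{\beta}-\tfrac{\beta F^2}{2}\Bigr),
\end{equation*}
and the right-hand side is strictly positive whenever $F<2/\beta$. Applying the maximum principle to the infimum of $\Phi$ gives $\inf_{M_t}F\ge\min\bigl(\inf_{M_0}F,\,2/\beta\bigr)$, again depending only on $M_0$ and $\alpha$.

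The proof is short once one has Proposition \ref{prop3.2} and its consequences in Remark 3.5, and the negative ambient curvature enters only through the sign of the $\kappa\dot F^{ij}g_{ij}$ term, which is exactly what produces the restoring term pushing $F$ toward the interval $[2/\beta,2\beta]$. The only real delicacy is sign bookkeeping: $\Phi<0$, $\Phi\dot\Phi<0$, and $\kappa=-1$ all enter, so I would be careful to verify that ``max of $\Phi$'' corresponds to ``max of $F$'' and that the Hamilton maximum principle is applied in the correct direction in each case. No concavity of $F$ and no additional gradient-term analysis is needed here, which is the point of using the pinching estimate as an input.
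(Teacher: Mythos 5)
Your proposal is correct, and it is essentially the argument the paper invokes: the paper's proof simply cites Gerhardt and Scheuer's maximum-principle arguments for \eqref{2.3}, noting that the pinching consequence $\dot F^{ij}g_{ij}\geq 2/\beta$ from \eqref{F-bd-H-pf2} replaces the concavity-based inequality $\dot F^{ij}g_{ij}\geq 2$ in deriving the lower bound, which is exactly the role it plays in your computation. Your sign bookkeeping and the use of \eqref{3.12}--\eqref{3.13} are accurate, so your write-up is a valid self-contained instantiation of the proof the paper leaves to the references.
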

\proof
The proof is essentially given in \cite{gerhardt2011-H^n,Scheuer2015-1}. The only difference is that when deriving the lower bound of $F$, the inequality $\dot{F}^{ij}g_{ij}\geq 2$ which is due to the concavity of $F$, is used crucially in \cite{gerhardt2011-H^n,Scheuer2015-1}. In our case, the pinching estimate \eqref{3.10} implies a similar lower bound $ \dot{F}^{ij}g_{ij}\geq 2/{\beta}$ given in \eqref{F-bd-H-pf2}. This is enough to derive the estimate \eqref{F-bdd-twoside}.  \endproof

The estimate \eqref{F-bdd-twoside} on $F$ and \eqref{pt-F-bd1} on $\partial F/{\partial\lambda_i}, i=1,2,$ give us the uniform upper and lower positive bounds on the principal curvatures $\lambda_1,\lambda_2$. Using the same argument as that in the proof of Proposition \ref{prop4.5}, we derive that
\begin{prop}
Under the assumption of Theorem \ref{thm1.2}, the flow \eqref{1.1} in $\mathbb{H}^3$ exists for all time, i.e., $T=\infty$.
\end{prop}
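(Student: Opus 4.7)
The plan is to follow verbatim the contradiction argument used in Proposition \ref{prop4.5} for the Euclidean case, with the only substantive differences being that the role of the bounds \eqref{F-upp1}--\eqref{F-lowerbd} is now played by the already-established two-sided bound \eqref{F-bdd-twoside}, and the $C^0$, $C^1$ bounds come from Lemma \ref{lem-5-1} and Lemma \ref{lem-5-2} respectively. Concretely, I would suppose for contradiction that $T<\infty$ and then aim to produce a smooth strictly convex limit surface $M_T$, so that short-time existence allows me to extend the flow past $T$, contradicting the maximality of $T$.

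First, from \eqref{F-bdd-twoside} combined with the pinching estimate \eqref{3.10} via \eqref{F-lambda}, I obtain uniform positive two-sided bounds on the principal curvatures $\lambda_1,\lambda_2$ on $M\times[0,T)$. Together with the $C^0$ bound of Lemma \ref{lem-5-1}(1) and the $C^1$ bound of Lemma \ref{lem-5-2}, this gives uniform $C^2$ bounds on the graph function $u(t,\theta)$ solving \eqref{flow-ut-H} on $\mathbb{S}^2\times[0,T)$. Next, the bound \eqref{F-bdd-twoside} on $F$ together with \eqref{pt-F-bd1} on $\partial F/\partial\lambda_i$ implies that the scalar equation \eqref{flow-ut-H} is uniformly parabolic with bounded coefficients on $\mathbb{S}^2\times[0,T)$.

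The main obstacle, and the reason the proof is not a mere citation of Krylov's estimate, is that no concavity hypothesis is placed on $F$, so the standard Evans--Krylov second derivative Hölder estimates are unavailable. This is the precise point at which the two-dimensionality is essential: I would appeal to the second derivative Hölder estimate of Andrews \cite{andrews2004fully}, which holds for fully nonlinear parabolic equations in two spatial dimensions without any concavity assumption on $F$, to produce a uniform $C^{2,\gamma}$ estimate on $u$ up to time $T$. Once $C^{2,\gamma}$ is in hand, the equation becomes linear uniformly parabolic in its highest order terms with $C^\gamma$ coefficients, and the standard parabolic Schauder estimates \cite{lieberman1996second}, together with a bootstrap, yield uniform $C^k$ bounds on $u$ on $\mathbb{S}^2\times[0,T)$ for every $k\in\mathbb{N}$.

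Finally, these uniform higher-order estimates let me take a smooth limit $u_T=\lim_{t\to T}u(t,\cdot)$, which defines a smooth, closed, strictly convex surface $M_T\subset\mathbb{H}^3$ (strict convexity being preserved because the two-sided positive bounds on $\lambda_1,\lambda_2$ pass to the limit). Applying the short-time existence theorem at $M_T$ produces a smooth extension of the flow to some interval $[T,T+\varepsilon)$, contradicting the definition of $T$ as the maximal existence time. Hence $T=\infty$, as claimed.
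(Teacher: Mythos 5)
Your proof is correct and follows exactly the route the paper takes: the paper derives the two-sided principal curvature bounds from \eqref{F-bdd-twoside} and the pinching estimate, and then explicitly invokes ``the same argument as in the proof of Proposition \ref{prop4.5}'', i.e.\ the $C^0$--$C^2$ estimates, Andrews' two-dimensional second derivative H\"older estimate in place of Krylov's, parabolic Schauder bootstrapping, and the extension-beyond-$T$ contradiction. No gaps.
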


Up to now, we have proved that the curvature function $F$ and the principal curvatures $\lambda_1,\lambda_2$ of $M_t$ are bounded from above and below by uniform positive constants depending only on $\alpha$ and $M_0$ for all time $t\in [0,\infty)$.

\begin{lem}\label{lem-pinch-H}
Under the assumption of Theorem \ref{thm1.2}, the pinching ratio $\beta$ of $M_t$ tends to $1$ exponentially as $t\to\infty$.
\end{lem}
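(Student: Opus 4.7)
The plan is to exploit the same machinery as in Theorem \ref{thm3.1}, but now extract a strictly negative multiple of $G$ from the zero-order term once we specialise to $\kappa=-1$. Recall from the proof of Theorem \ref{thm3.1} that at a point where $G$ attains its spatial maximum (so the critical-point identities \eqref{3.8} hold), and with $\kappa=-1$, the zero-order contribution is
\begin{equation*}
Q_0=\frac{4\Phi G}{\lambda_1+\lambda_2}\bigl((1-\alpha)+(1+\alpha)\lambda_1\lambda_2\bigr),
\end{equation*}
and the gradient contribution $Q_1$ is $\leq 0$. For $\alpha\in(0,1]$ the factor $(1-\alpha)+(1+\alpha)\lambda_1\lambda_2$ is strictly positive, while $\Phi=-F^{-\alpha}<0$. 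Using the uniform two-sided positive bounds on $\lambda_1,\lambda_2$ and on $F$ that were established just before the lemma, one concludes that there is a constant $c_0=c_0(M_0,\alpha)>0$ such that at any spatial maximum with $\lambda_1\neq\lambda_2$,
\begin{equation*}
Q_0\leq -c_0\, G.
\end{equation*}

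Applying the parabolic maximum principle to the evolution equation \eqref{3.2_1} for $G$ then gives
\begin{equation*}
\frac{d}{dt}\max_{M_t}G\leq -c_0\,\max_{M_t}G
\end{equation*}
in the viscosity sense, whence $\max_{M_t}G\leq(\max_{M_0}G)\,e^{-c_0 t}$. (If the maximum is attained at an umbilical point then $\max G=0$ and the bound is trivial; otherwise the computations of Section \ref{sec:3} apply directly.) The relation $\beta_t=\frac{2}{1-\sqrt{\max_{M_t}G}}-1$ noted just before Proposition \ref{prop3.2} then yields
\begin{equation*}
\beta_t-1\leq C\sqrt{\max_{M_t}G}\leq C' e^{-c_0 t/2},
\end{equation*}
which is exactly the claimed exponential decay of the pinching ratio to $1$.

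I expect the only delicate point is isolating a clean lower bound for $-Q_0/G$ that is \emph{uniform in time}. This is where the two-sided curvature and $F$ bounds derived just above are essential; without them the factor $\Phi/(\lambda_1+\lambda_2)$ could in principle degenerate. The gradient term $Q_1\leq 0$ plays no further role here, and the hyperbolic sign $\kappa=-1$ is what makes $(1-\alpha)$ enter with a favourable sign in $Q_0$, which both secures the argument for all $\alpha\in(0,1]$ and anticipates the asymptotic rate $2^{1-\alpha}$ of Theorem \ref{thm1.2} (since, as $t\to\infty$, $\lambda_i\to 1$ and $F\to 2$, the extractable constant $c_0$ approaches $2^{2-\alpha}$, giving $\sqrt{G}=O(e^{-2^{1-\alpha}t})$).
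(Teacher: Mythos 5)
Your proposal is correct and follows essentially the same route as the paper: specialize the $Q_0$ term from Theorem \ref{thm3.1} to $\kappa=-1$, observe that the uniform two-sided bounds on $F$ and $\lambda_1,\lambda_2$ established just before the lemma give $\frac{d}{dt}\max_{M_t}G\leq -C\max_{M_t}G$, and integrate. The only (harmless) additions are your explicit translation from the decay of $G$ to the decay of $\beta-1$ and the remark identifying the asymptotic constant $2^{2-\alpha}$, both of which the paper leaves implicit.
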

\proof
From the proof of Theorem \ref{thm3.1}, we know that
\begin{align*}
  \frac{d}{dt}\max_{M_t}G \leq & -\frac{4F^{-\alpha}}{\lambda_1+\lambda_2}\left(1-\alpha+(1+\alpha)\lambda_1\lambda_2\right)\max_{M_t}G ~\leq  ~-C\max_{M_t}G,
\end{align*}
where we used the estimates on $F$ and $\lambda_1,\lambda_2$ and therefore $C>0$ is a positive constant depending only on $\alpha$ and $M_0$.  Then
\begin{equation*}
  G\leq e^{-Ct}\max_{M_0}G\to 0,\quad \text{as}~~t\to \infty.
\end{equation*}
\endproof

From Section \ref{sec:2-2}, we know that for a graph $M=\text{graph} ~u$ over a geodesic sphere in $\mathbb{H}^3$,  the induced metric on $M$ is
\begin{equation}\label{g-H}
  g_{ij}=u_iu_j+\sinh^2 u \sigma_{ij},
\end{equation}
where $\sigma_{ij}$ is the components of $g_{\mathbb{S}^2}$. The second fundamental form of $M$ satisfies
\begin{equation}\label{2ff-H}
  h_{ij}v^{-1}=-u_{ij}+\sinh u\cosh u\sigma_{ij},
\end{equation}
where $v$ is defined in \eqref{v-def} and the covariant derivatives $u_{ij}$ are taken with respect to the induced metric of $M$. In Theorem 4.1 of \cite{Scheuer2015-1}, it was proved that there exist constants $\lambda>0$, $c>0$ depending on $\alpha,M_0$ such that
\begin{equation}\label{v-decay}
  v-1\leq~ce^{-\lambda t},\quad \forall ~t\in [0,\infty).
\end{equation}
Note that the concavity of $F$ is not required in the proof of \eqref{v-decay}. Recall that the support function $\chi$ of the graph $M=\text{graph}~  u$ in $\mathbb{H}^3$ is defined by
\begin{equation*}
  \chi=\bar{g}((\sinh u)\partial_r,\nu)=v^{-1}\sinh u.
\end{equation*}
In view of the estimate \eqref{v-decay} and Lemma \ref{lem-5-1}, along the flow \eqref{1.1} in $\mathbb{H}^3$, we have that
\begin{equation}\label{lim-chi}
  \sup_{M_t}\left |\log\chi-u+\log 2\right |=~ \sup_{M_t}|\log(2e^{-u}v^{-1}\sinh u)|\leq ~ce^{-\lambda t}.
\end{equation}
By \eqref{s2:chi-ds1}, the first order derivative of the support function $\chi$ satisfies $\chi_i=\bar{g}( \sinh u~ \partial_r,h_i^k\partial_{k})$, so we have
\begin{equation}\label{chi-bd}
  \chi^{-1}|\chi_i|\leq \chi^{-1}(\sum_{i=1}^2\chi_i^2)^{\frac 12}\leq \lambda_2\frac{(1-\bar{g}(\partial_r,\nu)^2)^{1/2}}{\bar{g}(\partial_r,\nu)}=~ \lambda_2\sqrt{v^2-1}\leq ~ce^{-\lambda t}
\end{equation}
on $M_t$, as the principal curvature $\lambda_2$ is uniformly bounded. We note that the $c$ and $\lambda$ in Eqs. \eqref{v-decay}, \eqref{lim-chi} and \eqref{chi-bd} are not the same, but all of them only depend on $\alpha$ and $M_0$.

\begin{lem}\label{lem-cur1}
Under the assumption of Theorem \ref{thm1.2}, the principal curvatures $\lambda_1,\lambda_2$ of $M_t$ converge to $1$ as $t\to\infty$.
\end{lem}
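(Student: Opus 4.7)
The plan is to reduce the statement to showing the curvature function itself converges, namely $F(x,t)\to 2$ uniformly on $M_t$, and then to prove this convergence by applying Hamilton's maximum principle to the scalar evolution of $\Phi=-F^{-\alpha}$, with the exponentially decaying pinching from Lemma \ref{lem-pinch-H} entering as a controllable error.

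First I would note that by Lemma \ref{lem-pinch-H} the pinching ratio $\beta(t)$ satisfies $\beta(t)\to 1$ exponentially, while the chain
$$2\lambda_1\leq F(\lambda_1,\lambda_2)\leq 2\lambda_2\leq 2\beta(t)\lambda_1$$
from \eqref{F-lambda} gives $|F-2\lambda_i|\leq(\beta(t)-1)F$ uniformly in $i=1,2$. Hence to prove $\lambda_i\to 1$ it suffices to prove $F\to 2$ uniformly on $M_t$ as $t\to\infty$.

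For the convergence of $F$, I would specialize \eqref{2.3} to $\kappa=-1$. Using $\Phi\dot\Phi=-\alpha F^{-2\alpha-1}$ one gets
$$\partial_t\Phi-\dot\Phi\dot F^{ij}\nabla_i\nabla_j\Phi\;=\;-\alpha F^{-2\alpha-1}\bigl(\dot F^{ij}h_{ip}h^p_j-\dot F^{ij}g_{ij}\bigr).$$
At a spatial maximum of $F$ (equivalently of $\Phi$) the second-order term on the left is non-positive; combining this with $\partial_tF=(F^{\alpha+1}/\alpha)\partial_t\Phi$ and the Euler identities $\sum_i\dot F^i\lambda_i=F$, $\sum_i\dot F^i\lambda_i^2\geq\lambda_1 F$, together with the pinching $\lambda_2\leq\beta(t)\lambda_1$, I would estimate
$$\dot F^{ij}h_{ip}h^p_j\geq\lambda_1 F\geq\frac{F_{\max}^2}{2\beta(t)},\qquad \dot F^{ij}g_{ij}\leq\frac{F}{\lambda_1}\leq 2\beta(t),$$
and Hamilton's differential-inequality trick then yields
$$\frac{d}{dt}F_{\max}(t)\;\leq\;-\frac{F_{\max}^{-\alpha}(t)}{2\beta(t)}\bigl(F_{\max}^2(t)-4\beta(t)^2\bigr).$$
An analogous argument at a spatial minimum of $F$ gives
$$\frac{d}{dt}F_{\min}(t)\;\geq\;-\frac{F_{\min}^{-\alpha}(t)}{2\beta(t)}\bigl(\beta(t)^2F_{\min}^2(t)-4\bigr).$$
Since $\beta(t)\to 1$ exponentially and $F$ is bounded above and below by positive constants by \eqref{F-bdd-twoside}, a standard ODE-comparison argument forces $\limsup_{t\to\infty}F_{\max}(t)\leq 2$ and $\liminf_{t\to\infty}F_{\min}(t)\geq 2$. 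Thus $F\to 2$ uniformly, and combined with the reduction step this proves $\lambda_i\to 1$.

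The main obstacle is the careful handling of the pinching error: one needs to verify that the ``round-limit'' identity $\dot F^{ij}h_{ip}h^p_j-\dot F^{ij}g_{ij}=F^2/2-2$, valid at $\lambda_1=\lambda_2$, degenerates gracefully and that the $O(\beta(t)-1)$ corrections are genuinely small enough not to obstruct the sign in the ODE comparison once $F_{\max}-2$ (resp.\ $2-F_{\min}$) becomes comparable to or smaller than $\beta(t)-1$. A secondary, more routine point is that $F_{\max}$ and $F_{\min}$ are only Lipschitz in $t$, which is handled by interpreting the ODE inequalities in the upper Dini sense as in \cite{Andrews1994-2}.
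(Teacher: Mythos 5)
Your argument is correct, but it takes a genuinely different route from the paper's. The paper adapts Scheuer's scheme from \cite{Scheuer2015-1}: it introduces the auxiliary quantities $\omega=\log(-\Phi)-\log\chi+u+(\alpha-1)\log 2$ and $\phi=-\log(-\Phi)-\log\chi+u-(\alpha+1)\log 2$, whose evolution equations couple the speed to the support function $\chi$ and the radial function $u$; the asymptotic identification $\log\chi=u-\log 2+O(e^{-\lambda t})$ of \eqref{lim-chi} (which rests on the gradient decay \eqref{v-decay} and \eqref{chi-bd}) then converts $\limsup\sup\omega\leq 0$ and $\limsup\sup\phi\leq 0$ into $\liminf\inf F\geq 2$ and $\limsup\sup F\leq 2$. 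You instead run the maximum principle directly on the evolution \eqref{2.3} of the speed with $\kappa=-1$, using the two-sided pinching bounds $F^2/(2\beta)\leq\dot{F}^{ij}h_{ip}h^p_j\leq\beta F^2/2$ and $2/\beta\leq\dot{F}^{ij}g_{ij}\leq 2\beta$ from \eqref{F-bd-H-pf2}, \eqref{3.12} and \eqref{3.13}, together with the exponential decay $\beta(t)\to 1$ of Lemma \ref{lem-pinch-H} and the a priori bounds \eqref{F-bdd-twoside}. This does close: at a spatial maximum the zero-order term is bounded above by $-\tfrac{F^{-\alpha}}{2\beta}\bigl(F^2-4\beta^2\bigr)$, and since for every $\varepsilon>0$ one eventually has $2\beta(t)<2+\varepsilon/2$, the Lipschitz/Dini ODE comparison gives a uniformly negative derivative on $\{F_{\max}\geq 2+\varepsilon\}$ and hence $\limsup F_{\max}\leq 2$ (symmetrically for the minimum); your worry about the regime $F_{\max}-2\lesssim\beta(t)-1$ is immaterial because the comparison only needs to drive $F_{\max}$ below $2+\varepsilon$, not to keep the derivative negative afterwards. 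What your route buys is economy: it needs none of the graphical machinery ($\chi$, $u$, $v$, nor the estimates \eqref{v-decay}--\eqref{chi-bd}), and the reduction via \eqref{F-lambda} is identical to the paper's. What the paper's route buys is generality and continuity with the literature: it uses only the \emph{lower} bounds $\dot{F}^{ij}g_{ij}\geq 2/\beta$ and $\dot{F}^{ij}h_i^kh_{kj}\geq F^2/(2\beta)$, so it survives in settings where no curvature pinching (hence no upper bound in \eqref{3.13}) is available, e.g.\ star-shaped non-convex data with concave $F$ as in \cite{Scheuer2015-1}, and it keeps the subsequent steps (the rate \eqref{decay-cur} and the higher-order estimates) aligned with the results cited from that paper.
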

\proof
Recall that we have the following relation between $F$ and $\lambda_i$ (see \eqref{F-lambda}) which is due to the pinching estimate:
\begin{equation*}
  2\lambda_1\leq F(\lambda_1,\lambda_2)\leq 2\lambda_2\leq2\beta\lambda_1.
\end{equation*}
Since the pinching ratio $\beta$ of $M_t$ tends to $1$ as $t\to \infty$ by Lemma \ref{lem-pinch-H}, in order to prove this lemma it suffices to prove
\begin{equation}\label{lem-cur1-pf1}
  \sup_{M_t}|F(x,t)-2|\to 0,\quad \text{as}~~t\to \infty.
\end{equation}

(i). First,  we prove
\begin{equation}\label{lem-cur1-pf2}
  \liminf_{t\to\infty}\inf_{M_t}F(x,t)\geq 2.
\end{equation}
The proof is similar to part (ii) of Lemma 4.3 in \cite{Scheuer2015-1}. We define a function on $M_t$ by
\begin{equation}\label{omeg-H}
  \omega=\log{(-\Phi)}-\log\chi+u+(\alpha-1)\log 2,
\end{equation}
which is slightly different from the one in part (ii) of Lemma 4.3 of \cite{Scheuer2015-1}. In view of the estimate \eqref{lim-chi}, in order to prove \eqref{lem-cur1-pf2}, it suffices to prove that
\begin{equation}\label{td-omega-lim}
  \limsup_{t\to\infty}\sup_{M_t}\omega\leq 0.
\end{equation}
By using \eqref{2.3}, \eqref{flow-ut-H} and \eqref{s2:evl-chi}, we obtain  that
\begin{align}\label{omeg-evl1}
  \partial_t\omega= & \Phi^{-1}\partial_t\Phi-\chi^{-1}\partial_t\chi+\partial_tu\nonumber\\
  =&\dot{\Phi}\dot{F}^{ij}(\Phi^{-1}\nabla_i\nabla_j\Phi-\chi^{-1}\nabla_i\nabla_j\chi)-\dot{\Phi}\dot{F}^{ij}g_{ij}+(1-\alpha){\Phi}{\chi}^{-1}\cosh u+vF^{-\alpha}\nonumber\\
   =&\dot{\Phi}\dot{F}^{ij}\omega_{ij}+ \dot{\Phi}\dot{F}^{ij}\left(\omega_i\omega_j+2\omega_i(\chi^{-1}\chi_j-u_j)-2\chi^{-1}\chi_iu_j+u_iu_j-u_{ij}\right)\nonumber\\
  &\quad -\dot{\Phi}\dot{F}^{ij}g_{ij}+(1-\alpha){\Phi}{\chi}^{-1}\cosh u+vF^{-\alpha}\nonumber\\
  = & \dot{\Phi}\dot{F}^{ij}\omega_{ij}+ \dot{\Phi}\dot{F}^{ij}\left(\omega_i\omega_j+2\omega_i(\chi^{-1}\chi_j-u_j)-2\chi^{-1}\chi_iu_j+(1+\coth u)u_iu_j\right)\nonumber\\
  &\quad -(1+\coth u)\dot{\Phi}\dot{F}^{ij}g_{ij}+\dot{\Phi}\dot{F}^{ij}h_{ij}v^{-1}+(1-\alpha){\Phi}{\chi}^{-1}\cosh u+vF^{-\alpha},
\end{align}
where in the last equality we used the expressions \eqref{g-H} and \eqref{2ff-H}. Note that all the covariant derivatives in the equation above  are taken with respect to the induced metric on $M_t$. Denote $\tilde{\omega}(t)=\sup_{M_t}\omega=\omega(t,\theta_t)$, which is a Lipschitz function on $\mathbb{R}^+$. Then at the point $(t,\theta_t)$, $\dot{\Phi}\dot{F}^{ij}\omega_{ij}\leq 0$ and $\omega_i=0$. Moreover, by using the estimates \eqref{v-decay}, \eqref{chi-bd} and noting that $|\nabla u(t)|^2_{g_t}=\sinh^{-2}u(t)v(t)^{-2}|Du(t)|^2_{g_{\mathbb{S}^2}}$, we have that
\begin{equation*}
  \dot{\Phi}\dot{F}^{ij}\left(-2\chi^{-1}\chi_iu_j+(1+\coth u)u_iu_j\right)~\to~0, \quad \text{as }t\to\infty.
\end{equation*}
Then from \eqref{omeg-evl1}, we have that
\begin{align}\label{evl-omeg-H}
  \frac d{dt}\tilde{\omega}(t)\leq & o(1)-4\alpha\beta^{-1}F^{-\alpha-1}(t,\theta_t)+(2\alpha-1+v)F^{-\alpha}(t,\theta_t)\nonumber\\
  \leq&~o(1)+2\alpha(F(t,\theta_t)-2)F^{-\alpha-1}(t,\theta_t)\quad \text{as }t\to\infty,
\end{align}
where we used that $\dot{F}^{ij}g_{ij}\geq  2/\beta$~(see \eqref{F-bd-H-pf2}), $\coth u\geq 1$, and the estimates $\beta,v\to 1$ as $t\to \infty$ which are obtained in \eqref{v-decay} and Lemma \ref{lem-pinch-H}.

We claim that: $\forall~\epsilon>0$, there exist $t_\epsilon>0$ and $\delta_\epsilon>0$ such that
\begin{equation}\label{claim-H}
 A_{\epsilon}:= \big\{t\in [t_\epsilon,\infty)\cap D:\tilde{\omega}(t)>\epsilon\big\}~\subset~\big\{t\in [t_\epsilon,\infty)\cap D: \frac d{dt}\tilde{\omega}(t)\leq -\delta_\epsilon\big\},
\end{equation}
where $D$ is the set of points of differentiability of $\tilde{\omega}$.

Since we have already proved that $F$ is bounded, there exists a constant $c$ which depends on $\alpha$ and $M_0$ such that
 $2\alpha F^{-\alpha-1}\geq c(\alpha,M_0)$. For any $\epsilon>0$, we define $\delta_\epsilon=c(1-e^{-\frac{\epsilon}{2\alpha}})>0$, which depends on $\alpha, M_0$ and $\epsilon$. By the estimate \eqref{lim-chi} and \eqref{evl-omeg-H}, we can choose $t_\epsilon>0$ sufficiently large such that
\begin{equation}\label{F-inf}
  -\log\chi+u-\log 2~<\frac {\epsilon}2,~ \frac d{dt}\tilde{\omega}(t)\leq~ \delta_\epsilon+2\alpha(F(t,\theta_t)-2)F^{-\alpha-1}(t,\theta_t),~\forall ~t\in[t_\epsilon,\infty).
\end{equation}
Then for any $t\in A_{\epsilon}$, $\log(-\Phi 2^{\alpha})(t,\theta_t)>\epsilon/2$, which implies that $F(t,\theta_t)<2e^{-\frac{\epsilon}{2\alpha}}$. Then in view of \eqref{F-inf}, we have
\begin{equation*}
  \frac d{dt}\tilde{\omega}(t)\leq~ \delta_\epsilon+c(2e^{-\frac{\epsilon}{2\alpha}}-2)=-\delta_\epsilon.
\end{equation*}
This proves the claim \eqref{claim-H}. By an easy exercise of Calculus (cf. Lemma 4.2 in \cite{Scheuer2015-1}), the estimate \eqref{td-omega-lim} follows from the claim \eqref{claim-H}.

(ii). Second, we prove the other direction
\begin{equation}\label{lem-cur1-pf3}
  \limsup_{t\to\infty}\sup_{M_t}F(x,t)\leq 2.
\end{equation}
We modify \eqref{omeg-H} and define a new function on $M_t$  by
\begin{equation*}
  \phi=-\log(-\Phi)-\log\chi+u-(\alpha+1)\log 2.
\end{equation*}
We aim to prove that
\begin{equation}
  \limsup_{t\to\infty}\sup_{M_t}\phi\leq 0,
\end{equation}
which is equivalent to \eqref{lem-cur1-pf3} in view of the estimate \eqref{lim-chi}. By a direct calculation, $\phi$ satisfies the following evolution equation
\begin{align*}
  \partial_t\phi= & \dot{\Phi}\dot{F}^{ij}\phi_{ij}- \dot{\Phi}\dot{F}^{ij}\left(\phi_i\phi_j+2\phi_i(\chi^{-1}\chi_j-u_j)+2\chi^{-2}\chi_i\chi_j-2\chi^{-1}\chi_iu_j+(1-\coth u)u_iu_j\right)\nonumber\\
  &\quad -2\dot{\Phi}\dot{F}^{ij}h_i^kh_{kj}+(1-\coth u)\dot{\Phi}\dot{F}^{ij}g_{ij} +\dot{\Phi}\dot{F}^{ij}h_{ij}v^{-1}+(1-\alpha){\Phi}{\chi}^{-1}\cosh u+vF^{-\alpha}.
\end{align*}
Denote $\tilde{\phi}(t)=\sup_{M_t}\phi=\phi(t,\theta_t)$, which is also a Lipschitz function on $\mathbb{R}^+$. Then
\begin{align*}
  \frac d{dt}\tilde{\phi}(t)\leq & o(1)-\alpha\beta^{-1}F^{-\alpha+1}(t,\theta_t)+\alpha( v^{-1}+v)F^{-\alpha}(t,\theta_t)\nonumber\\
  \leq&~o(1)+\alpha(2-F(t,\theta_t))F^{-\alpha}(t,\theta_t) \quad \text{as }t\to\infty,
\end{align*}
where we used that $\dot{F}^{ij}h_i^kh_{kj}\geq F^2/{2\beta}$~(see \eqref{3.12}), $\coth u\geq 1$ and $\beta,v\to 1$ as $t\to\infty$~(see \eqref{v-decay} and Lemma \ref{lem-pinch-H}). Then the remaining proof follows after a similar argument to that in part (i).
\endproof

Now we can follow the similar argument as that in \cite{Scheuer2015-1} to complete the proof of Theorem \ref{thm1.2}. First, by the same argument as that in Theorem 4.4 of \cite{Scheuer2015-1}, we can obtain the optimal convergence rate of the principal curvatures, i.e.,
\begin{equation}\label{decay-cur}
  |h_i^j-\delta_i^j|~\leq~ce^{-2^{(1-\alpha)}\cdot t},\quad \forall~t>0,
\end{equation}
where $c$ is a positive constant depending on $\alpha$ and $M_0$. The conclusion of Lemma \ref{lem-cur1} plays a crucial role in dealing with the bad terms involving the derivatives of the second fundamental form. Second, using the argument in \cite[\S 5]{Scheuer2015-1}, we can also obtain uniform higher order derivative estimates. Finally, using the conformally flat parametrization as that in \cite[\S 6]{Scheuer2015-1}, we obtain the convergence \eqref{thm1.2-f} of the defining function $u(t,\theta)$. We refer the reader to \cite{Scheuer2015-1} for more details.

\section{Flow in sphere}

In this section,  we consider the flow \eqref{1.1} in sphere $\mathbb{S}^3$ and prove Theorem \ref{thm1.3}. We assume that $F$ satisfies  Assumption \ref{assum-1} and $\alpha=1$. Since $M_0$ is assumed to be strictly convex in $\mathbb{S}^3$, then $M_0$ is embedded, contained in an open hemisphere and is the boundary of a convex body in $\mathbb{S}^3$.
Since the flow  is a parabolic equation with strictly convex initial data, by short time existence theorem, the flow surfaces $M_t$ exist on a maximal time interval $[0,T)$ for some $0<T\leq \infty$.

Firstly, we show that $T$ is finite and is characterized as the time when the velocity of the flow blows up. Recall the evolution equation \eqref{2.3} of $\Phi$,
\begin{equation}\label{6.1}
  \frac{\partial}{\partial t}\Phi-\dot{\Phi}\dot{F}^{ij}\Phi_{ij}=\Phi\dot{\Phi}(\dot{F}^{ij}h_{ip}h^p_j+\dot{F}^{ij}g_{ij}).
\end{equation}
As before, the pinching ratio estimate \eqref{3.10} implies that (cf. \eqref{F-bd-H-pf2} and \eqref{3.12})
\begin{equation}\label{6.2}
  \dot{F}^{ij}h_{ip}h^p_j+\dot{F}^{ij}g_{ij}\geq \frac 1{2\beta}F^{2}+\frac 2{\beta}.
\end{equation}
Recall that $\Phi=-F^{-1}$ in this case, then we have
\begin{equation}\label{prop6-1-pf1}
  \frac{\partial}{\partial t}(F^{-1})-\dot{\Phi}\dot{F}^{ij}(F^{-1})_{ij}\geq \frac 2{\beta}F^{-3}.
\end{equation}
The maximum principle then implies that $F^{-1}$ will blow up in finite time and thus $T<\infty$.

From \eqref{prop6-1-pf1}, we can also see that $F^{-1}$ is bounded from below by its initial value, and therefore $F$ is uniformly bounded above. By the pinching ratio estimate, we conclude that the principal curvatures $\lambda_1,\lambda_2$ are also uniformly bounded above. Then as in previous two sections, we can derive $C^0, C^1$ and $C^2$ estimates of the flow surfaces. Moreover, arguing as \cite[\S 5-6]{makowski2013rigidity} and using the pinching ratio estimate \eqref{3.10} and Andrews' \cite{andrews2004fully} second order derivative H\"{o}lder estimates and parabolic Schauder estimates \cite{lieberman1996second}, we can conclude that  $\liminf_{t\to T}\inf_{M_t}F\to 0$ as $t\to T$, $M_t\to M_T$ in $C^{1,\gamma}, 0<\gamma<1$ as $t\to T$ and that there exists a point $x_0\in\mathbb{S}^3$ such that the limit surface $M_T$ is the equator $S(x_0)=\partial \mathcal{H}(x_0)$. More precisely, we have

\begin{prop}\label{s6:prop-1}
Under the assumption of Theorem \ref{thm1.3}, the flow surfaces $M_t$ converge to the equator $S(x_0)=\partial \mathcal{H}(x_0)$ in $C^{1,\gamma}, 0<\gamma<1$ as $t\to T$.
\end{prop}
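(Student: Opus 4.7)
The plan is to adapt the argument of Makowski--Scheuer~\cite{makowski2013rigidity}, using the pinching estimate from Proposition~\ref{prop3.2} and Andrews' two-dimensional second-derivative H\"older estimate~\cite{andrews2004fully} as substitutes for the concavity hypothesis on $F$ assumed there. I would split the proof into three claims: (a) $\liminf_{t\to T}\inf_{M_t} F = 0$, (b) $M_t \to M_T$ in $C^{1,\gamma}$ for every $\gamma \in (0,1)$, and (c) $M_T$ is the equator $S(x_0) = \partial\mathcal{H}(x_0)$ for some $x_0 \in \mathbb{S}^3$.

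For (a) I argue by contradiction: if $F \geq c_0 > 0$ uniformly on $M \times [0,T)$, then combining with the upper bound on $F$ already derived from applying the maximum principle to~\eqref{prop6-1-pf1} (since $F^{-1}$ has a positive lower bound) and with the pinching estimate~\eqref{F-lambda} gives two-sided positive bounds on $\lambda_1,\lambda_2$. By~\eqref{pt-F-bd1} the evolution equation then becomes uniformly parabolic, and Andrews' H\"older estimate together with parabolic Schauder estimates~\cite{lieberman1996second} yield smooth bounds on the graph function $u(t,\cdot)$ up to $t = T$, allowing an extension of the flow past $T$ and contradicting its maximality. For (b), I represent $M_t$ as a radial graph of $u(t,\theta)$ in geodesic polar coordinates of $\mathbb{S}^3$ centered at an interior point of the convex body bounded by $M_0$. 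Standard $C^0$ and $C^1$ estimates for $u$ (via maximum principle arguments parallel to those in the Euclidean case of Section~\ref{sec:2-3}, using that star-shapedness is preserved) together with the uniform upper bound on the principal curvatures produce a uniform $C^{1,1}$-bound on $u(t,\cdot)$. Since the enclosed convex bodies $\Omega_t$ are monotonically increasing in $t$, Arzel\`a--Ascoli and interpolation yield $u(t,\cdot) \to u_T$ in $C^{1,\gamma}$ for any $\gamma\in(0,1)$.

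For (c), claim (a) produces a sequence $(p_n, t_n)$ with $t_n\to T$ and $F(p_n, t_n)\to 0$, and the pinching~\eqref{F-lambda} forces both principal curvatures to vanish along this sequence. Combined with the monotone expansion of $\Omega_t$ and the $C^{1,\gamma}$-convergence of $\partial\Omega_t$, the argument of~\cite{makowski2013rigidity} identifies the limit $M_T$ as the boundary of an open hemisphere. The main obstacle is claim (c): deducing the global identity $M_T = S(x_0)$ from pointwise degeneration of the speed, rather than concluding only that some principal curvature is small at isolated points. The pinching estimate~\eqref{F-lambda} is essential here, since it forces both curvatures to degenerate simultaneously; this compensates for the absence of concavity, which in~\cite{makowski2013rigidity} was used to invoke Krylov-type second-derivative H\"older estimates and to derive the global flatness of the limit.
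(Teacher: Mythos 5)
Your proposal follows essentially the same route as the paper: the paper's argument for this proposition is precisely to note that $F$ (hence, by pinching, both principal curvatures) is bounded above, to derive the $C^0$, $C^1$, $C^2$ estimates, and then to invoke the machinery of \cite[\S 5--6]{makowski2013rigidity} with the pinching estimate \eqref{3.10} and Andrews' two-dimensional $C^{2,\alpha}$ estimate \cite{andrews2004fully} substituting for concavity, exactly as you do, including deferring the identification of $M_T$ with the equator to that reference. Your decomposition into claims (a)--(c), the contradiction argument for the degeneration of $\inf F$, and the $C^{1,1}$-bound/monotonicity argument for $C^{1,\gamma}$-convergence are all consistent with (and somewhat more explicit than) the paper's own one-paragraph treatment.
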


To show the smooth convergence, we employ the dual flow method introduced by Gerhardt in \cite{Gerhardt2015}. Let $M_0\subset \mathbb{S}^{n+1}$ be a closed strictly convex hypersurface given by the immersion
\begin{equation}\label{s2:embd-X}
  X:M\to M_0\subset \mathbb{S}^{n+1}.
\end{equation}
Then $M_0$ is embedded and homeomorphic to $\mathbb{S}^n$, contained in an open hemisphere and is the boundary of a convex body $\Omega_0\subset \mathbb{S}^{n+1}$. $M_0$ can also be viewed as a submanifold of codimension 2 in $\mathbb{R}^{n+2}$. The Gauss formula then is
\begin{equation*}
  X_{ij}~=-g_{ij}X-h_{ij}\tilde{X},
\end{equation*}
where $X_{ij}$ denotes the covariant derivatives of $X$, $g_{ij}$ is the induced metric, $h_{ij}$ the second fundamental form of $M_0$ considered as a hypersurface of $\mathbb{S}^{n+1}$, and $\tilde{X}$ represents the exterior normal vector of $M_0$ in $\mathbb{S}^{n+1}$. Then the map $ \tilde{X}:M\to \mathbb{S}^{n+1}$ is an embedding of a closed strictly convex hypersurface $\tilde{M}_0=\tilde{X}(M)$, which is called the Gauss map of $M_0$.

Viewing $\tilde{M}_0$ as a codimension 2 submanifold in $\mathbb{R}^{n+2}$, its Gauss formula is
\begin{equation*}
  \tilde{X}_{ij}~=-\tilde{g}_{ij}\tilde{X}-\tilde{h}_{ij}X,
\end{equation*}
where $X$ is the embedding \eqref{s2:embd-X} which represents the exterior normal vector of $\tilde{M}_0$. The second fundamental forms of $M_0, \tilde{M}_0$  and the corresponding principal curvatures $\lambda_i, \tilde{\lambda}_i$ satisfy $h_{ij}= \tilde{h}_{ij}=\langle \tilde{X}_i,X_j\rangle$ and $\tilde{\lambda}_i=\lambda_i^{-1}$, where $\langle\cdot,\cdot\rangle$ denotes the Euclidean metric of $\mathbb{R}^{n+2}$. We have the following result from \cite{Gerhardt2015} which does not need the concavity of $F$.

\begin{thm}[\cite{Gerhardt2015}]\label{thm6.2}
Let $\Phi\in C^{\infty}(\mathbb{R}_+)$ be strictly monotone and $F$ be the curvature function satisfying Assumption \ref{assum-1}. Assume that $M_0\subset \mathbb{S}^{n+1}$ is a closed and strictly convex  hypersurface and $\tilde{M}_0$ is its dual hypersurface. We have that the dual flows
\begin{equation}\label{s6:dual-flow1}
 \frac{\partial}{\partial t}X=~-\Phi(F)\nu
 \end{equation}
and \begin{equation}\label{s6:dual-flow2}
 \frac{\partial}{\partial t}\tilde{X}=~\Phi(\tilde{F}^{-1})\tilde{\nu}
 \end{equation}
 with initial data $M_0$ and $\tilde{M}_0$ resp. exist on maximal time intervals $[0,T)$ resp. $[0,T^*)$, where the flow hypersurfaces are strictly convex. Moreover, $T=T^*$ and the corresponding flow hypersurfaces $M_t$ and $\tilde{M}_t$ are polar sets of each other.
\end{thm}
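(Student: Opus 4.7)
The plan is to show that the Gauss map of a solution to the primal flow gives, up to tangential reparametrization, a solution to the dual flow; the involutivity of polar duality in $\mathbb{S}^{n+1}$ then delivers $T=T^{*}$ together with the polarity of $M_t$ and $\tilde M_t$ at each time.

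First I would set $\tilde X:=\nu$ and compute its evolution in the ambient $\mathbb{R}^{n+2}$. Differentiating the three constraints $|\nu|^2=1$, $\langle\nu,X\rangle=0$, $\langle\nu,X_j\rangle=0$ in $t$, substituting $\partial_t X=-\Phi(F)\nu$, and using $\partial_j\nu=h_j^{k}X_k$, one obtains
$$\partial_t \tilde X \;=\; \Phi(F)\,X \;+\; \nabla\Phi(F),$$
where $\nabla\Phi(F)$ is the tangential gradient along $M_t$. The outer normal to $\tilde M_t:=\nu(M_t)$ in $\mathbb{S}^{n+1}$ is $\tilde\nu=X$, and the Weingarten relation $\partial_j\tilde\nu=X_j=\tilde h_j^{k}\tilde X_k$ combined with $\tilde X_k=h_k^{l}X_l$ forces $\tilde h_j^{k}=(h^{-1})_j^{k}$, so $\tilde\lambda_i=1/\lambda_i$. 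This is the classical polar duality in the sphere; in particular $\tilde M_t$ is strictly convex iff $M_t$ is.

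The component of $\partial_t\tilde X$ along $\tilde\nu=X$ is $\Phi(F)$, while $\nabla\Phi(F)$ is tangent to $\tilde M_t$, because the tangent spaces of $M_t$ at $X$ and of $\tilde M_t$ at $\tilde X$ coincide as subspaces of $T_X\mathbb{S}^{n+1}$ (the passage between them is multiplication by the invertible Weingarten $h_j^{k}$). Under the natural convention in which the dual curvature function is $\tilde F(\tilde\lambda):=1/F(1/\tilde\lambda)$, the polar correspondence $\tilde\lambda_i=1/\lambda_i$ gives $\tilde F=1/F$, hence $\tilde F^{-1}=F$ and the normal velocity $\Phi(F)$ equals $\Phi(\tilde F^{-1})$. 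Tangential terms only reparametrize the underlying manifold and do not change $\tilde M_t$ as a set, so $\tilde M_t=\nu(M_t)$ solves the dual flow equation \eqref{s6:dual-flow2} with initial datum $\tilde M_0$.

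To establish $T=T^{*}$, observe that the primal flow is quasilinear parabolic and exists on a maximal interval $[0,T)$ so long as $M_t$ remains strictly convex. By $\tilde\lambda_i=1/\lambda_i$, the Gauss-map construction yields a strictly convex solution of the dual flow on $[0,T)$, giving $T^{*}\ge T$. Running the same construction starting from $\tilde M_0$ and using that polar duality is involutive produces a strictly convex solution of the primal flow on $[0,T^{*})$, so $T\ge T^{*}$; hence $T=T^{*}$, and polarity of $M_t$ and $\tilde M_t$ at each $t$ is built into the construction. The main obstacle is the computation in the second paragraph: one must carefully track the ambient derivative of $\nu$, keep the Weingarten and Gauss identities consistent, and pin down the convention for $\tilde F$ so that the normal velocity $\Phi(F)$ is correctly identified with $\Phi(\tilde F^{-1})$. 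Once this notational and geometric point is settled, everything else rests only on standard parabolic existence theory for convex quasilinear flows together with the elementary observation that polar duality swaps strictly convex hypersurfaces.
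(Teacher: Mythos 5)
Your argument is correct and follows the same route as the cited source (the paper itself only quotes this theorem from Gerhardt and records the duality identities $\tilde{h}_{ij}=h_{ij}$, $\tilde{\lambda}_i=\lambda_i^{-1}$ in the preceding paragraphs): differentiate the constraints on $\nu$ in $\mathbb{R}^{n+2}$ to get $\partial_t\tilde{X}=\Phi(F)\tilde{\nu}+\nabla\Phi(F)$, discard the tangential part, identify $\Phi(F)=\Phi(\tilde{F}^{-1})$ via the inverse curvature function, and use involutivity of polar duality to equate the maximal existence times. Your choice of convention $\tilde{F}(\tilde\lambda)=1/F(1/\tilde\lambda)$ is exactly the one Gerhardt uses and the one consistent with the paper's subsequent reduction of the dual flow to McCoy's contraction flow, so no gap remains.
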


Since $\Phi(F)=-F^{-1}$, we have $\Phi(\tilde{F}^{-1})=-\tilde{F}$. Then the flow \eqref{s6:dual-flow2} corresponds to the contracting curvature flow in sphere with speed $\tilde{F}$.  McCoy \cite{McCoy2017} proved the convergence result for such contracting curvature flow in sphere without needing concavity of the speed function: Let $X: M\times [0,T)\to \mathbb{S}^3$ be a solution to the contracting curvature flow
\begin{equation}\label{s6:flow-contr}
 \frac{\partial}{\partial t}X=~-F\nu,
 \end{equation}
with speed $F=F(\lambda_1,\lambda_2)$ satisfying Assumption \ref{assum-1}. Using the argument in \cite[\S 6--7]{Gerhardt2015} and the $C^{2,\alpha}$ estimate without needing concavity of the speed function by Andrews \cite{andrews2004fully}, McCoy \cite{McCoy2017} first proved that the solution ${M}_t$ of the flow \eqref{s6:flow-contr} contracts any strictly convex surface to a point $p\in \mathbb{S}^3$ as $t\to T$. Let $\Theta=\Theta(t,T)$ denote the radius of the shrinking spheres along the flow \eqref{s6:flow-contr} with extinction time $T$, i.e., $\Theta(t,T)=\arccos e^{2(t-T)}$ which satisfies
\begin{equation*}
  \frac d{dt}\Theta(t,T)=-2\cot\Theta(t,T),\quad \Theta(T,T)=0.
\end{equation*}
Then it has been proved in \cite{McCoy2017} that the rescaled principal curvatures satisfy
\begin{equation}\label{s6:kap}
0<C^{-1}\leq \lambda_i\Theta\leq C,\qquad i=1,2
\end{equation}
for some constant $C>0$ and the rescaled polar graph function $\tilde{u}(\cdot,\tau)=u/{\Theta}$ has uniform $C^m$ regularity for all $m\geq 2$, where $u$ is the polar graph function of the solution $M_t$ with respect to the point $p$ and the rescaled time parameter is $\tau=-\ln\Theta$. Note that the rescaled polar graph function $\tilde{u}$ satisfies
\begin{equation}\label{s6:u-td}
  \frac{\partial}{\partial \tau}\tilde{u}=-\frac 12vF\tan e^{-\tau} +\tilde{u},
\end{equation}
where $F=F(h_{i}^j)$ is the function evaluated at the Weingarten matrix of the unrescaled solution $M_t=\mathrm{graph} (u)$ and $v=\sqrt{1+\sin^{-2}u|Du|^2_{g_{\mathbb{S}^2}}}$ is given in \eqref{s2:v-def}. Moreover, $\tilde{u}(\cdot,\tau)$ converges to $1$ smoothly as $\tau\to\infty$. Since the detail is not given in \cite{McCoy2017} and the convergence will be used crucially in our dual expanding curvature flow,  for the readers' convenience we describe the convergence of the flow \eqref{s6:flow-contr} in the following by using the rescaling process in \cite[\S 5-6]{Andrews1994-3}.

For any constant $A>0$, we define $X^{(A)}: M\times [0,A^2T)\to (\mathbb{S}^3, A^2g_{\mathbb{S}^3})$ by $X^{(A)}(x,t)=X(x,t/{A^2})$. Then the principal curvatures $\lambda_i^{(A)}$ and the unit normal vector field on $M_t^{(A)}=X^{(A)}(M,t)$ satisfy $\lambda_i^{(A)}=\lambda_i/A$ and $\nu^{(A)}=\nu/A$. Therefore, $X^{(A)}$ is also a solution to the equation \eqref{s6:flow-contr} in $(\mathbb{S}^3, A^2g_{\mathbb{S}^3})$. We consider a sequence of times $\{t_k\}$ approaching to the maximal existence time $T$ of $X$, such that
 \begin{equation*}
   \sup_{M\times [0,t_k)}|\mathcal{W}|(x,t)=|\mathcal{W}|(x_k,t_k)
 \end{equation*}
for some point $x_k\in M_{t_k}$. For each $k$, we rescale the flow \eqref{s6:flow-contr} with respect to the parameter $A=A_k=|\mathcal{W}|(x_k,t_k)$ and define a sequence of flows $X^{k}(x,t)=X^{(A_k)}(x,t)=X(x,t_k+t/A_k^2)$, where $t\in [0, A_k^2(T-t_k))$. Without loss of generality, we can assume that $k$ is  large enough such that $t_k$ is sufficiently close to $T$. Since $\Theta(t_k,T)=\arccos e^{2(t_k-T)}$, we have $\Theta(t_k,T)\approx 2\sqrt{(T-t_k)}$ when $k$ is sufficiently large. \eqref{s6:kap} implies that $0<C^{-1}\leq A_k^2(T-t_k)\approx \lambda_i^2\Theta^2\leq C$ for another constant $C>0$. Therefore, there exists a constant $\delta>0$ such that $X^k(x,t)$ exists on the interval $t\in [0,\delta]$ for all $k$.

For each large $k$, we choose an isometry from $\mathbb{R}^3$ to $T_{x_k}\mathbb{S}^3$ so that we can identify the tangent spaces of $\mathbb{S}^3$ at each $x_k$. Using the exponential maps from $T_{x_k}\mathbb{S}^3$ to $\mathbb{S}^3$, we obtain a family of surfaces $\check{M}_t^k$ in $\mathbb{R}^3$ which corresponds to the family $M_t^k=X^k(M,t)\subset \mathbb{S}^3$. The regularity estimate of $M_t^k$ gives the corresponding regularity estimate of $\check{M}_t^k$.  From \eqref{s6:kap} we know that $u A_k\approx u/\Theta$, so we also have that $\check{M}_t^k$ are contained in a bounded region of $\mathbb{R}^3$.  Since $A_k=|\mathcal{W}|(x_k,t_k)$ goes to infinity as $k\to\infty$, the  metric on $\mathbb{R}^3$  induced by the exponential maps from $T_{x_k}\mathbb{S}^3$ to $(\mathbb{S}^3, A_k^2g_{\mathbb{S}^3})$ will converge to a flat metric as $k\to \infty$. We can find a subsequence $\{k'\}$ of $\{k\}$ such that  $\check{M}_t^{k'}$ converges to a limiting family of complete surfaces $\check{M}_t^{\infty}$ in $\mathbb{R}^3$ as $k'\to\infty$. Since each surface $\check{M}_t^{\infty}$ has pinched principal curvatures, a theorem by Hamilton \cite{Ham94} implies that $\check{M}_t^{\infty}$ is compact. Moreover, $\check{M}_t^{\infty}$ is a smooth solution of the flow \eqref{s6:flow-contr} in $\mathbb{R}^3$ for $t\in [0,\delta]$ with $\delta>0$.

The limit solution $\check{M}_t^{\infty}$ must be shrinking spheres in $\mathbb{R}^3$. This follows from the evolution equation of the following quantity
\begin{equation*}
  G~=~\frac{(\lambda_2-\lambda_1)^2}{(\lambda_1+\lambda_2)^2}.
\end{equation*}
In \cite{Andrews2010}, Andrews calculated that along the flow \eqref{s6:flow-contr} in $\mathbb{R}^3$, $G$ satisfies
\begin{equation}\label{s6:G-evl}
  \frac{\partial}{\partial t}G=\dot{F}^{kl}\nabla_k\nabla_lG+Q(\nabla h,\nabla h),
\end{equation}
where $Q(\nabla h,\nabla h)$ are quadratic terms involving the first derivatives of the second fundamental form. At the maximum point of $G$ where $G$ is non-zero (otherwise, $\check{M}_t^{\infty}$ is a sphere and the proof is trivial),  the gradient terms $Q(\nabla h,\nabla h)$ in \eqref{s6:G-evl} satisfy
\begin{equation}\label{s6:Q}
Q(\nabla h,\nabla h)=-\frac{8F}{H^3}\left((\nabla_1h_{22})^2+(\nabla_2h_{11})^2\right)
\end{equation}
and are non-positive. The strong maximum principle implies that the maximum of $G$ is strictly decreasing along the limit solution unless $G$ is a constant in both space and time. Since the quantity $G$ is a scaling-invariant, the maximum of $G$ can not be strictly decreasing on the limit solution. Hence, the value of $G$ on the limit solution $\check{M}_t^{\infty}$ must be a constant, independent of space and time. In fact, this constant corresponds to the limit of $\max_{M_{t_k}}G$ as $t_k\to T$ along the unrescaled flow \eqref{s6:flow-contr} in $\mathbb{S}^3$. Moreover, this constant must be zero: If this constant is not zero, then \eqref{s6:Q} implies that $\nabla_1h_{22}\equiv 0$ and $\nabla_{2}h_{11}\equiv 0$. The Codazzi equations imply that $\nabla_{1}h_{12}\equiv0$ and $\nabla_2h_{12}\equiv 0$. The fact $\nabla G\equiv 0$ then implies that $\nabla_{i}h_{jk}\equiv 0$ for all $i,j,k$. Therefore, $\check{M}_t^{\infty}$ are shrinking spheres. This implies that there exists a subsequence of time $t_k$ such that the maximum of $G$ on $M_{t_k}$ along the flow \eqref{s6:flow-contr} in $\mathbb{S}^3$ converges to zero as $t_k\to T$. Since the maximum of $G$ is monotone decreasing along the flow \eqref{s6:flow-contr}, we conclude that $\max_{M_t}G$  converges to zero as $t\to T$. That is, the solution $M_t$ of the flow \eqref{s6:flow-contr} converges to a round point as $t\to T$. The stronger exponentially smooth convergence of the rescaled equation \eqref{s6:u-td} can be deduced by considering the linearization of the flow about the shrinking sphere solution.

Now we go back to the expanding curvature flow \eqref{s6:dual-flow1}. As was shown in Proposition \ref{s6:prop-1}, the solution $M_t$ of the flow \eqref{1.1} will expand to the equator $\partial \mathcal{H}(x_0)$ as $t\to T$. There exists a time $\bar{t}<T$ such that $M_t$ can be written as a polar graph of a function $u$ with respect to the geodesic polar coordinates centered at $x_0$ for all $t\in [\bar{t},T)$. The dual contracting curvature flow can be written as graphs of a function $u^*$ with respect to the geodesic polar coordinates centered at $-x_0$ for all $t\in [\bar{t},T)$. By Lemma 9.1 of \cite{Gerhardt2015}, the functions $u, u^*$ satisfy the relations
\begin{equation*}
  u_{\max}=\frac{\pi}2-u^*_{\min},\qquad u_{\min}=\frac{\pi}2-u^*_{\max},\qquad \forall~t\in [\bar{t},T).
\end{equation*}
Since the rescaled function $u^*\Theta^{-1}\to 1$ as $t\to T$, we have $\omega=(\frac{\pi}2-u)\Theta^{-1}\to 1$ as $t\to T$. The $C^1$ estimate of $\omega$ follows from Lemma 9.3 of \cite{Gerhardt2015}. The dual relation and the $C^2$ estimate of the contracting flow give that $\lambda_i\Theta^{-1}\to 1$ as $t\to T$, which then leads to the $C^2$ estimate of $\omega$. Since $\omega$ satisfies
\begin{equation}\label{s6:w-t}
   \frac{\partial}{\partial \tau}\omega=-\frac 12vF^{-1}\tan(e^{-\tau})-\omega
\end{equation}
with respect to the new time parameter $\tau=-\ln\Theta(t,T)$, it is a direct calculation to check that \eqref{s6:w-t} is a uniformly parabolic equation of $\omega$. Since we already have $C^0,C^1$ and $C^2$ estimates of $\omega$, the regularity estimate of Andrews \cite{andrews2004fully} gives the $C^{2,\alpha}$ estimate of $\omega$ and the Schauder theory gives the $C^{m,\alpha}$ estimate of $\omega$ for all $m\geq 2$. Therefore, applying the interpolation inequalities (cf. Lemma 6.1 of \cite{gerhardt2011-H^n}) for the $C^m$ norms of $\omega$, we have that $\omega$ converges in $C^{\infty}$ to $1$ exponentially. This completes the proof of Theorem \ref{thm1.3}.

\section{Proof of Theorem \ref{thm-ex-H}}\label{sec:HW}

We define the quantity for a smooth surface $M$ in $\mathbb{H}^3$ by
\begin{equation*}
  Q(M)=-|M|\int_{M}|\mathring{A}|^2d\mu,
\end{equation*}
where $\mathring{A}$ is the trace-less part of the second fundamental form of $M$. The following proposition characterizes when the limit of rescaled metric is a round metric in terms of the value of $Q$.
\begin{prop}[\cite{Hung-Wang2015inverse}]\label{prop-hung-wang}
Let $\tilde{M}_s$ be a family of surfaces in $\mathbb{H}^3$ that are radial graphs of the function $u(s,\theta)=cs+f(\theta)+o(1)$ and $g_{ij}$ be the induced metric on $\tilde{M}_s$. Then the limit of the rescaled metric $e^{-2cs}g_{ij}$ as $s\to \infty$ is a round metric if and only if $\lim_{s\to\infty}Q(\tilde{M_s})=0$.
\end{prop}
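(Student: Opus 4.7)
The plan is to perform an explicit asymptotic expansion of the geometric data of $\tilde M_s$ as $s\to\infty$ and reduce the statement to a classical characterization of round conformal metrics on $\mathbb{S}^2$.

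First I would compute the limit of the rescaled induced metric. Using the warped product formula \eqref{s2:gij} in $\mathbb{H}^3$, $g_{ij}=u_iu_j+\sinh^2 u\,\sigma_{ij}$ with $\sigma=g_{\mathbb{S}^2}$. Substituting $u=cs+f(\theta)+o(1)$, the term $u_iu_j=f_if_j+o(1)$ stays bounded while $\sinh^2 u=\tfrac14 e^{2cs}e^{2f}(1+o(1))$ dominates, so
\begin{equation*}
e^{-2cs}g_{ij}\longrightarrow \tfrac14 e^{2f}\sigma_{ij}\quad\text{as }s\to\infty.
\end{equation*}
Thus the limiting rescaled metric is the conformal metric $\tfrac14 e^{2f}\sigma$ on $\mathbb{S}^2$, and the question becomes: when does this conformal factor produce a round metric?

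Next I would expand the traceless second fundamental form. Since $\varphi(u)=\int_{u_0}^{u}(\sinh r)^{-1}dr=-2e^{-u}+O(e^{-3u})$ for large $u$, $\sigma$-covariant differentiation gives $\varphi_{jk}=\varphi'(u)u_{jk}+\varphi''(u)u_ju_k=2e^{-u}(u_{jk}-u_ju_k)+O(e^{-3u})$. Using the identity $u_{jk}-u_ju_k=-e^{f}(e^{-f})_{jk}+o(1)$ (from $u=cs+f+o(1)$ and a direct computation on $e^{-f}$) together with $e^{-u}\sim e^{-cs}e^{-f}$, this becomes $\varphi_{jk}=-2e^{-cs}(e^{-f})_{jk}+o(e^{-cs})$. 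Substituting into \eqref{hij-warp}, and noting $v=1+O(e^{-2cs})$, $\coth u=1+O(e^{-2cs})$ and $v^{-2}\varphi^i\varphi^k\varphi_{jk}=o(e^{-2cs})$, I obtain
\begin{equation*}
h^i_j=\delta^i_j+4e^{-2cs}e^{-f}\sigma^{ik}(e^{-f})_{jk}+(\text{pure trace in }i,j)+o(e^{-2cs}),
\end{equation*}
so after removing the trace and squaring,
\begin{equation*}
\mathring h^i_j=4e^{-2cs}e^{-f}(\mathring D^2 e^{-f})^i_j+o(e^{-2cs}),\qquad |\mathring A|^2=16 e^{-4cs}e^{-2f}|\mathring D^2 e^{-f}|^2_\sigma+o(e^{-4cs}).
\end{equation*}

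Finally I would assemble $Q$. With $d\mu=\sinh^2u\cdot v\,d\mu_\sigma\sim\tfrac14 e^{2cs}e^{2f}d\mu_\sigma$, one gets $|\tilde M_s|\sim\tfrac14 e^{2cs}\int_{\mathbb{S}^2}e^{2f}d\mu_\sigma$ and $\int_{\tilde M_s}|\mathring A|^2d\mu\sim 4e^{-2cs}\int_{\mathbb{S}^2}|\mathring D^2 e^{-f}|^2_\sigma d\mu_\sigma$. The $e^{\pm 2cs}$ factors cancel, yielding
\begin{equation*}
\lim_{s\to\infty}Q(\tilde M_s)=-\Bigl(\int_{\mathbb{S}^2}e^{2f}d\mu_\sigma\Bigr)\Bigl(\int_{\mathbb{S}^2}|\mathring D^2 e^{-f}|^2_\sigma d\mu_\sigma\Bigr),
\end{equation*}
which matches the constant $c_0$ from the excerpt. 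Since the first factor is positive, $\lim Q=0$ iff $\mathring D^2 e^{-f}\equiv 0$. A classical fact (direct check via $D^2_\sigma x_k=-x_k\,\sigma$ on $\mathbb{S}^2\subset\mathbb{R}^3$, and an Obata-type rigidity argument for the converse) then says $\mathring D^2 e^{-f}\equiv 0$ iff $e^{-f}=a_0+a\cdot x$ for some $a_0\in\mathbb{R}$, $a\in\mathbb{R}^3$ with $a_0^2>|a|^2$; a direct Gauss curvature computation yields constant curvature $4(a_0^2-|a|^2)$, so $\tfrac14 e^{2f}\sigma$ is then round, and conversely every round metric in the conformal class of $\sigma$ arises in this way. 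The main obstacle in the whole argument is the subleading expansion in the second step: one must verify that the corrections from $v$, $\coth u$ and the trace of $\sigma^{ik}(e^{-f})_{jk}$ all collect into the trace part of $h^i_j$ without spilling into the traceless part at order $e^{-2cs}$. The change of variable to $\varphi$ is what makes this clean, since $-\varphi_{jk}/\sinh u$ is the natural ingredient of the mixed Weingarten tensor in $\mathbb{H}^3$ and the asymptotic $\varphi\sim -2e^{-u}$ identifies it directly with the $\sigma$-Hessian of $e^{-f}$ at leading order.
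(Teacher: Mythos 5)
The paper does not actually prove this proposition; it is quoted from Hung--Wang \cite{Hung-Wang2015inverse}, and your expansion reproduces their computation, so in spirit you are on the paper's (i.e.\ the source's) route: expand $h^i_j$ via \eqref{hij-warp}, identify the traceless part at order $e^{-2cs}$ with $\mathring D^2e^{-f}$, get $\lim_{s\to\infty}Q(\tilde M_s)=-\bigl(\int_{\mathbb{S}^2}e^{2f}\bigr)\bigl(\int_{\mathbb{S}^2}|\mathring D^2e^{-f}|^2_\sigma\bigr)$, and invoke the classical fact that $\mathring D^2h=0$ on $\mathbb{S}^2$ iff $h=a_0+a\cdot x$ iff $h^{-2}\sigma$ is round. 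I checked the constants: $\varphi_{jk}=-2e^{-cs}(e^{-f})_{jk}+o(e^{-cs})$, $\mathring h^i_j=4e^{-2cs}e^{-f}\sigma^{ik}(\mathring D^2e^{-f})_{jk}+o(e^{-2cs})$, $d\mu\sim\tfrac14e^{2cs}e^{2f}d\mu_\sigma$, and the $e^{\pm2cs}$ and $e^{\pm2f}$ factors cancel exactly as you claim, matching $-c_0$ in \eqref{Qs-lim1}.

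There is, however, one genuine gap you should close: everything after your first step silently upgrades the hypothesis ``$u=cs+f(\theta)+o(1)$'' to convergence of the remainder in $C^2(\mathbb{S}^2)$. Your identity $u_{jk}-u_ju_k=-e^{f}(e^{-f})_{jk}+o(1)$ is false if the $o(1)$ is only uniform: the second fundamental form sees two derivatives of the remainder, and a perturbation such as $w_s(\theta)=e^{-2cs}\sin(e^{2cs}\theta^1)$ is $o(1)$ in $C^0$ with $e^{-2cs}w_iw_j\to0$ (so the rescaled metric still converges to $\tfrac14e^{2f}\sigma$), yet $\sigma^{ik}w_{jk}/\sinh^2u=O(1)$, so $|\mathring A|$ stays of order one and $Q\to-\infty$; taking $f$ constant then gives a round limit metric with $\lim Q\neq0$, i.e.\ the ``only if'' direction fails as literally stated. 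So either state the proposition with the remainder $o(1)$ in $C^2$ (which is how it is used here: the flow surfaces $M^s_t$ carry the uniform decay estimates of Propositions \ref{lem-decay} and \eqref{neves-est1}, so $\mathring A$ and $\nabla A$ are controlled), or supply the missing a priori bound on second derivatives before performing the expansion. With that hypothesis made explicit, your argument is complete; the Obata-type step at the end is standard and correctly quantified (constant curvature $4(a_0^2-|a|^2)$ for the metric $\tfrac14(a_0+a\cdot x)^{-2}\sigma$ scaled as you have it).
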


Given an initial star-shaped and mean convex surface $M_0$ in $\mathbb{H}^3$, we consider the smooth solution $M_t$ of the following flow with $0<\alpha<1$:
\begin{equation}\label{flow-H^k}
\left\{\begin{aligned}
  \frac{\partial }{\partial t} X(x,t)=& H^{-\alpha}(x,t)\nu(x,t), \\
  X(\cdot,0)= & X_0(\cdot).
\end{aligned}\right.
\end{equation}

\begin{lem}
Suppose that $M_t$ is a smooth solution of the flow \eqref{flow-H^k}. We have that
\begin{equation}\label{Q-evl}
\begin{aligned}
  \frac{d}{dt}Q(M_t) =& ~|M_t|\int_{M_t}H^{1-\alpha}|\mathring{A}|^2d\mu_t-\int_{M_t}H^{1-\alpha}\int_{M_t}|\mathring{A}|^2d\mu_t\\
   & \quad +\alpha|M_t|\int_{M_t}H^{-1-\alpha}|\nabla H|^2d\mu_t.
\end{aligned}
\end{equation}
\end{lem}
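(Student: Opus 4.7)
The plan is to exploit the fact that $M_t$ is a $2$-dimensional topological sphere and reduce the computation to the evolutions of $|M_t|$ and $\int_{M_t} H^2\,d\mu_t$, rather than evolving $|\mathring{A}|^2$ pointwise. First, I would use the Gauss equation in $\mathbb{H}^3$, $K=\lambda_1\lambda_2-1$, together with $|\mathring{A}|^2=\tfrac{1}{2}(\lambda_1-\lambda_2)^2=\tfrac{1}{2}H^2-2\lambda_1\lambda_2$, to obtain the pointwise identity $|\mathring{A}|^2=\tfrac{1}{2}H^2-2K-2$. Since $M_t$ is star-shaped and hence diffeomorphic to $\mathbb{S}^2$, the Gauss--Bonnet theorem gives $\int_{M_t}K\,d\mu_t=4\pi$, so
\begin{equation*}
\int_{M_t}|\mathring{A}|^2\,d\mu_t=\frac{1}{2}\int_{M_t}H^2\,d\mu_t-2|M_t|-8\pi.
\end{equation*}

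Next, I would specialize the paper's evolution equation \eqref{2.3} to $F=H$, $\Phi=-H^{-\alpha}$, $\kappa=-1$. With $\dot{\Phi}=\alpha H^{-\alpha-1}$, $\dot{F}^{ij}h_{ip}h^p_j=|A|^2$, and $\dot{F}^{ij}g_{ij}=2$, equation \eqref{2.3} simplifies (after dividing by $\dot{\Phi}$) to
\begin{equation*}
\frac{\partial H}{\partial t}=-\Delta(H^{-\alpha})-H^{-\alpha}|A|^2+2H^{-\alpha}.
\end{equation*}
Combining this with $\partial_t\,d\mu_t=H^{1-\alpha}\,d\mu_t$ (hence $\frac{d|M_t|}{dt}=\int_{M_t}H^{1-\alpha}\,d\mu_t$), integrating by parts via $\int_{M_t}H\,\Delta H^{-\alpha}\,d\mu_t=\alpha\int_{M_t}H^{-\alpha-1}|\nabla H|^2\,d\mu_t$, and using $|A|^2=|\mathring{A}|^2+\tfrac{1}{2}H^2$ (so that the $\int H^{3-\alpha}\,d\mu_t$ terms cancel), I would obtain
\begin{equation*}
\frac{d}{dt}\int_{M_t}H^2\,d\mu_t=-2\alpha\int_{M_t}H^{-\alpha-1}|\nabla H|^2\,d\mu_t-2\int_{M_t}H^{1-\alpha}|\mathring{A}|^2\,d\mu_t+4\int_{M_t}H^{1-\alpha}\,d\mu_t.
\end{equation*}

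Finally, differentiating the Gauss--Bonnet identity and using the two computations above, the $\pm 2\int H^{1-\alpha}\,d\mu_t$ contributions cancel, leaving
\begin{equation*}
\frac{d}{dt}\int_{M_t}|\mathring{A}|^2\,d\mu_t=-\alpha\int_{M_t}H^{-\alpha-1}|\nabla H|^2\,d\mu_t-\int_{M_t}H^{1-\alpha}|\mathring{A}|^2\,d\mu_t.
\end{equation*}
The stated identity \eqref{Q-evl} then follows by applying the product rule to $Q(M_t)=-|M_t|\int_{M_t}|\mathring{A}|^2\,d\mu_t$ and substituting these two derivatives. The main obstacle I anticipate is simply the careful bookkeeping of signs and of the hyperbolic curvature contribution through $\kappa=-1$ in \eqref{2.3}: a direct evolution of $|\mathring{A}|^2$ pointwise would generate Simons-type terms that have to be integrated by parts back into the shape above, and the crucial observation making the proof short is that in two dimensions the topology of $S^2$ allows us to replace $\int |\mathring{A}|^2\,d\mu_t$ by an affine combination of $\int H^2\,d\mu_t$ and $|M_t|$ via Gauss--Bonnet.
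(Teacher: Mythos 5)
Your proposal is correct, and it takes a genuinely different route from the paper. The paper works at the level of the second fundamental form: it computes the pointwise evolution of $\mathring{h}_i^j$ from \eqref{2.4}, uses the two-dimensional identities $\mathring{h}_i^k\mathring{h}_k^j\mathring{h}_j^i=0$ and $\mathring{h}_j^i\delta_i^j=0$ to reduce $\partial_t|\mathring{A}|^2$ to the clean form $-2\mathring{h}_j^i\nabla^j\nabla_iH^{-\alpha}-2H^{1-\alpha}|\mathring{A}|^2$, and then integrates by parts using the contracted Codazzi identity $\nabla^j\mathring{h}_j^i=\tfrac12\nabla^iH$ to produce the $\alpha\int H^{-1-\alpha}|\nabla H|^2$ term. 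You instead never touch the tensor evolution: Gauss--Bonnet makes $\int_{M_t}K\,d\mu_t=2\pi\chi(M)$ constant in $t$, so $\frac{d}{dt}\int|\mathring{A}|^2\,d\mu_t=\tfrac12\frac{d}{dt}\int H^2\,d\mu_t-2\frac{d}{dt}|M_t|$, and everything reduces to the scalar equation \eqref{flow-H^k-H} for $H$ plus the evolution of the area form; your intermediate identities (the formula for $\frac{d}{dt}\int H^2\,d\mu_t$, the cancellation of the $\int H^{3-\alpha}$ and $\int H^{1-\alpha}$ terms) all check out and land on exactly the paper's \eqref{A-ev2}. Both arguments are two-dimensional at heart, but yours is more elementary for this particular lemma, while the paper's pointwise computation of $\partial_t\mathring{h}_i^j$ and $\partial_t|\mathring{A}|^2$ is machinery it needs again in \S 7.1 for the maximum-principle decay estimates, so the authors get the lemma essentially for free from equations they must derive anyway. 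One small remark: you do not actually need $M_t\cong\mathbb{S}^2$, only that $\int_{M_t}K\,d\mu_t$ is a topological constant, so the Gauss--Bonnet step is robust.
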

\proof
Along the flow \eqref{flow-H^k}, it follows from \eqref{2.2} and \eqref{2.4} that the second fundamental form $h_i^j$ of $M_t$ evolves by
\begin{align*}
  \frac{\partial}{\partial t} h_i^j=& -\nabla^j\nabla_iH^{-\alpha}-H^{-\alpha}h_i^kh_k^j+H^{-\alpha}\delta_i^j \\
  = & -\nabla^j\nabla_iH^{-\alpha}-H^{-\alpha}\mathring{h}_i^k\mathring{h}_k^j-H^{1-\alpha}\mathring{h}_i^j+H^{-\alpha}(1-\frac {H^2}4)\delta_i^j,
\end{align*}
where $\mathring{h}_i^j=h_i^j-\frac H2\delta_i^j$ is the trace-less part of the second fundamental form. Then we have that
\begin{align*}
  \frac{\partial}{\partial t} \mathring{h}_i^j= -\nabla^j\nabla_iH^{-\alpha}-H^{-\alpha}\mathring{h}_i^k\mathring{h}_k^j-H^{1-\alpha}\mathring{h}_i^j+H^{-\alpha}(1-\frac {H^2}4)\delta_i^j-\frac 12\partial_tH\delta_i^j.
\end{align*}
Hence, we obtain that
\begin{align}\label{A-evl}
  \frac{\partial}{\partial t}|\mathring{A}|^2 =&  -2\mathring{h}_j^i\nabla^j\nabla_iH^{-\alpha}-2H^{1-\alpha}|\mathring{A}|^2,
\end{align}
where we used the facts $\mathring{h}_i^k\mathring{h}_k^j\mathring{h}_{j}^i=0$ and $\mathring{h}_j^i\delta_i^j=0$. Since the area form $d\mu_t$ evolves by
\begin{equation}\label{vol-evl}
  \frac{\partial}{\partial t}d\mu_t=H^{1-\alpha}d\mu_t,
\end{equation}
we have
\begin{align}\label{A-ev2}
  \frac{d}{dt}\int_{M_t}|\mathring{A}|^2 d\mu_t=&  -2\int_{M_t}\mathring{h}_j^i\nabla^j\nabla_iH^{-\alpha}d\mu_t-\int_{M_t}H^{1-\alpha}|\mathring{A}|^2d\mu_t\nonumber\\
  =&2\int_{M_t}\nabla^j\mathring{h}_j^i\nabla_iH^{-\alpha}d\mu_t-\int_{M_t}H^{1-\alpha}|\mathring{A}|^2d\mu_t\nonumber\\
  =&-\alpha\int_{M_t}H^{-1-\alpha}|\nabla H|^2d\mu_t-\int_{M_t}H^{1-\alpha}|\mathring{A}|^2d\mu_t,
\end{align}
where in the last equality we used the Codazzi equation to obtain
\begin{equation*}
  \nabla^j\mathring{h}_j^i=\nabla^jh_j^i-\frac 12\nabla^iH=\frac 12\nabla^iH.
\end{equation*}
The evolution equation \eqref{Q-evl} follows by combining \eqref{A-ev2} and \eqref{vol-evl}.
\endproof

We now describe the procedure of constructing the example in Theorem \ref{thm-ex-H}. Choose a function $\bar{f}(\theta)$ on $\mathbb{S}^2$ such that
\begin{equation*}
  \int_{\mathbb{S}^2}e^{2\bar{f}}d\mu_{g_{\mathbb{S}^2}}\int_{\mathbb{S}^2}|\mathring{D}^2e^{-\bar{f}}|_{g_{\mathbb{S}^2}}^2d\mu_{g_{\mathbb{S}^2}}=c_0>0,
\end{equation*}
where $\mathring{D}^2e^{-\bar{f}}$ means the traceless part of the Hessian of $e^{-\bar{f}}$, and $g_{\mathbb{S}^2}$ means the round metric on $\mathbb{S}^2$.
Recall that the metric on $\mathbb{H}^3$ can be expressed as
\begin{equation*}
\bar{g}=dr^2+\sinh^2r g_{\mathbb{S}^2}
\end{equation*}
for the $(r,\theta)$ coordinates centered at some point $x_0\in\mathbb{H}^3$.
Let $\tilde{M}_s$ be the family of surfaces given by the radial graph of $u(s,\theta)=s+\bar{f}(\theta)$ over $\mathbb{S}^2$ in the $(r,\theta)$ coordinates of $\mathbb{H}^3$. We know from \cite{Hung-Wang2015inverse} that
\begin{equation}\label{Qs-lim1}
  \lim_{s\to\infty}Q(\tilde{M}_s)=-c_0.
\end{equation}

\begin{prop}[\cite{Neves-2010}]
There exists a constant $C_0$ depending only on the bound
\begin{equation*}
  E:=\sup_{\mathbb{S}^2}\sum_{i=0}^3|\nabla^i\bar{f}|_{g_{\mathbb{S}^2}}
\end{equation*}
such that for all $s\geq 1$, the mean curvature $H$ and the second fundamental form $A$ of $\tilde{M}_s$ satisfy that
\begin{align}\label{neves-est1}
  |\tilde{M}_s||H-2|+|\tilde{M}_s||\mathring{A}|+ |\tilde{M}_s|^3|\nabla A|^2 \leq&~C_0.
\end{align}
\end{prop}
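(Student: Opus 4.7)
The plan is to compute everything directly from the explicit parametrization $u(s,\theta)=s+\bar f(\theta)$ using the graph formulas from Section~\ref{sec:2-2} with $\kappa=-1$, and then extract the decay in $s$. The key bookkeeping fact is that $\partial^k_\theta u=\partial^k_\theta \bar f$ for $k\ge 1$, so every $\theta$-derivative of $u$ up to order three is bounded uniformly in $s$ by $E$. The target rates are $|H-2|,|\mathring A|=O(e^{-2s})$, $|\nabla A|=O(e^{-3s})$, and $|\tilde M_s|\asymp e^{2s}$, which exactly balance the powers of $|\tilde M_s|$ in the statement.

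\emph{Pointwise expansion of the Weingarten map.} Writing $\sinh(s+\bar f)=\tfrac12 e^{s+\bar f}(1-e^{-2(s+\bar f)})$, one has $\coth(u)-1$ and $v-1$ (see \eqref{v-def}) of size $O(e^{-2s})$ with implicit constants depending only on $E$. Plugging these into \eqref{hij-warp} and noting that the term $\sinh^{-2}(u)u_{jk}$ also contributes at order $e^{-2s}$ yields
\begin{equation*}
h^i_j = \delta^i_j + E^i_j, \qquad |E^i_j|\le C(E)\,e^{-2s},
\end{equation*}
where $E^i_j$ is an explicit rational expression in $\bar f, D\bar f, D^2\bar f$ multiplied by $e^{-2s}$. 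Taking trace and trace-free parts yields $|H-2|\le Ce^{-2s}$ and $|\mathring A|\le Ce^{-2s}$. By the matrix determinant lemma applied to $g_{ij}=\sinh^2(u)\sigma_{ij}+u_iu_j$, $\sqrt{\det g}=\sinh^2(u)\,v\,\sqrt{\det\sigma}$, so $|\tilde M_s|=\int_{\mathbb{S}^2}\sinh^2(s+\bar f)\,v\,d\mu_{g_{\mathbb{S}^2}}\in [ce^{2s},Ce^{2s}]$. Combining these bounds controls the first two summands of the desired inequality by $C(E)$.

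\emph{Gradient estimate.} Since the Kronecker delta $\delta^i_j$ is covariantly constant, $\nabla_k h^i_j=\nabla_k E^i_j$. Differentiating the explicit formula for $E^i_j$ in $\theta$ and invoking the $C^3$ bound on $\bar f$ gives $\partial_k E^i_j=O(e^{-2s})$ in coordinates. The Christoffel symbols of $g_{ij}$ are uniformly bounded in $s$ (the dominant piece $\sinh^2(u)\sigma_{ij}$ produces Christoffels proportional to $\coth(u)\partial_k u=O(1)$), so also $\nabla_k E^i_j=O(e^{-2s})$ as a coordinate expression. Forming the tensor norm
\begin{equation*}
|\nabla A|^2 = g^{kl}\,g_{ip}\,g^{jq}(\nabla_k h^i_j)(\nabla_l h^p_q),
\end{equation*}
each factor $g^{\alpha\beta}$ scales like $e^{-2s}$ and each $g_{\alpha\beta}$ like $e^{2s}$, so the three metric contractions contribute a net factor $e^{-2s}$; together with $(\nabla E)^2=O(e^{-4s})$ this gives $|\nabla A|^2\le Ce^{-6s}$. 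Combined with $|\tilde M_s|^3\le Ce^{6s}$, the third term is also bounded by $C(E)$.

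The main obstacle is precisely this last estimate. The coordinate components of $h^i_j$ are of order one, so a naive bound suggests only $|\nabla h|=O(1)$; it is the covariant constancy of the identity (which forces $\nabla h^i_j=\nabla E^i_j$) that captures the $e^{-2s}$ decay of the correction, after which the tensorial contractions with $g^{-1}$ supply the additional $e^{-s}$ needed for the sharp rate $|\nabla A|=O(e^{-3s})$. This is the step that genuinely requires the full $C^3$ bound on $\bar f$, reflecting the fact that $\nabla A$ involves three derivatives of the defining function.
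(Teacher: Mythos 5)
The paper gives no proof of this proposition --- it is quoted verbatim from Neves \cite{Neves-2010} --- and your direct computation from the graph formulas of \S\ref{sec:2-2} is precisely the standard argument there; it is correct. In particular the key bookkeeping checks out: with $u=s+\bar f$ one has $\varphi_{jk}=O(e^{-s})$ and hence $h^i_j-\delta^i_j=O(e^{-2s})$ together with its first $\theta$-derivative (the latter consuming the $\nabla^3\bar f$ bound), the Christoffel symbols of $g$ are $O(1)$, and the contraction $g^{kk'}g_{ii'}g^{jj'}$ in $|\nabla A|^2$ supplies the extra net factor $e^{-2s}$, so that $|\nabla A|^2=O(e^{-6s})$ balances $|\tilde M_s|^3=O(e^{6s})$.
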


By choosing  $s_0>0$ large enough and using the estimate \eqref{neves-est1}, we can make sure that there exists a constant $\epsilon_0>0$ such that for any $s\geq s_0$, we have
\begin{equation}\label{Ms-condi}
  3\geq H\geq \epsilon_0, \quad \bar{g}(\partial_r,\nu)\geq \epsilon_0, \quad |\mathring{A}|^2< \frac 14H^2.
\end{equation}
Then $\tilde{M}_s$ is mean-convex and star-shaped for any $s\geq s_0$. Denote the in-radius and out-radius of the initial surface $\tilde{M}_s=\text{graph} ~ u(s,\theta)(=s+\bar{f}(\theta))$ in the $(r,\theta)$ coordinates of $\mathbb{H}^3$ by $\underline{r}_0$ and $\bar{r}_0$, i.e., $\underline{r}_0=\inf u(s,\theta),~\bar{r}_0=\sup u(s,\theta)$. Clearly, $\bar{r}_0-\underline{r}_0=\text{osc}(\bar{f})$ is independent of $s$. We consider the solution $M_t^s$ of the flow \eqref{flow-H^k} starting from $\tilde{M}_s$, where $t$ is the time parameter.
\begin{prop}\label{lem-decay}
There exist constants $s_0=s_0(\alpha,\epsilon_0,\bar{r}_0-\underline{r}_0, E)$  and $C=C(\alpha,\epsilon_0,\bar{r}_0-\underline{r}_0,E)$ depending only on $\alpha,\epsilon_0,\bar{r}_0-\underline{r}_0$, and $E$ such that for any $s\geq s_0$, we have
\begin{equation}\label{est-decay}
\begin{aligned}
  |\tilde{M}_s|^2|H-2|^2+|\tilde{M}_s|^2|\mathring{A}|^2\leq~&Ce^{-2^{(2-\alpha)}\cdot t}\\
  |\tilde{M}_s|^3|\nabla A|^2 \leq~&Ce^{-3\cdot 2^{(1-\alpha)}\cdot t}
\end{aligned}
\end{equation}
on the solution $M_t^s$ of the flow \eqref{flow-H^k} starting from $\tilde{M}_s$.
\end{prop}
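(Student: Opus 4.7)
The plan is to follow Neves' strategy \cite{Neves-2010} developed for the $\alpha = 1$ case, the principal novelty being that several quantities must be tracked with explicit $|\tilde M_s|$-weights because the flow \eqref{flow-H^k} is not scale invariant when $\alpha < 1$. First, for $s_0$ large enough, Neves' estimate \eqref{neves-est1} together with \eqref{Ms-condi} guarantees uniform-in-$s$ bounds on pinching, mean curvature, and star-shapedness of $\tilde M_s$. Combining with the pinching estimate of Section \ref{sec:3} and inspecting the proof of Theorem \ref{thm1.2}, the flow $M_t^s$ exists for all $t \geq 0$ and its principal curvatures satisfy $|\lambda_i - 1| \leq C e^{-2^{1-\alpha}t}$ with $C$ depending only on $(\alpha, \epsilon_0, \bar r_0 - \underline r_0, E)$; Andrews' regularity theorem \cite{andrews2004fully} and parabolic Schauder theory then yield uniform $C^k$-bounds on the rescaled flow for every $k$.

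Next, I would derive parabolic inequalities for the pointwise quantities $|\tilde M_s|^2 (H-2)^2$ and $|\tilde M_s|^2 |\mathring A|^2$ from \eqref{A-evl} and the standard evolution equation of $H$. Their leading reaction coefficient, coming from the linearization of $-2H^{1-\alpha}$ at $H = 2$, equals $-2^{2-\alpha}$. Combined with the initial bound $|\tilde M_s|^2((H-2)^2 + |\mathring A|^2)(\cdot, 0) \leq 2C_0^2$ from \eqref{neves-est1}, the maximum principle, applied after controlling the higher-order remainders only qualitatively using Step~1, gives a preliminary estimate of the form $Ce^{-\eta t}$ for some $\eta \in (0, 2^{2-\alpha})$. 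A bootstrap then refines $\eta$: substituting the preliminary decay back into the nonlinear remainder makes each quadratic error decay at rate at least $e^{-\eta t/2}$, which can be reabsorbed into the linear part and yields an improved rate $\eta' > \eta$. Finitely many iterations push $\eta$ up to the sharp rate $2^{2-\alpha}$, proving the first line of \eqref{est-decay}.

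For the gradient bound, I would compute the evolution equation of $|\nabla A|^2$ directly along \eqref{flow-H^k}. The principal linear part again produces a leading rate (this time $3 \cdot 2^{1-\alpha}$ after accounting for the extra factor coming from the Simons-type commutators), while the nonlinear terms involve $\nabla^2 A$, $\nabla A$, and lower-order curvature quantities. These are controlled via the uniform $C^k$-bounds from Step~1 together with the sharp decay of $|A - I|$ established in the previous paragraph. A maximum-principle argument on the weighted quantity $|\tilde M_s|^3 |\nabla A|^2$, starting from Neves' initial bound $|\tilde M_s|^3|\nabla A|^2 \leq C_0^2$ at $t = 0$ and using the same bootstrap scheme as for $|\mathring A|^2$, then produces the second line of \eqref{est-decay}.

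The main difficulty, as anticipated in the discussion preceding \S \ref{sec:7-1}, is ensuring that \emph{every} constant appearing in the argument is controlled purely by $(\alpha, \epsilon_0, \bar r_0 - \underline r_0, E)$ and is genuinely independent of $s$. Because the flow is not scale invariant for $\alpha < 1$, there is no dilation reducing the problem to a compact family of initial data; instead, one must carefully audit each constant in Theorem \ref{thm1.2}, in Andrews' H\"older estimate, in the Schauder theory, and at every step of the bootstrap, in order to verify $s$-independence. This bookkeeping, combined with the need to run the bootstrap cleanly despite the non-scale-invariant $|\tilde M_s|$-weights appearing in front of different geometric quantities with different powers, constitutes the technical core of the proof.
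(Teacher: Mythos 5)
Your overall architecture (Neves' initial estimates, $|\tilde M_s|$-weighted quantities, a preliminary rate improved by bootstrap to the sharp one) is indeed the paper's strategy, but two steps that you treat as routine are exactly where the work lies, and as described they do not close. First, your Step 1 takes as input an $s$-independent decay $|\lambda_i-1|\leq Ce^{-2^{1-\alpha}t}$ obtained by ``auditing'' the constants of Theorem \ref{thm1.2} and the regularity theory. The paper does not do this: it re-derives the needed a priori control from scratch because the constants in the general convergence theory depend on $M_0=\tilde M_s$ in ways not obviously reducible to $(\alpha,\epsilon_0,\bar r_0-\underline r_0,E)$. Concretely, the uniform positive lower bound on $H$ (Lemma \ref{lem-ap-H}) is proved by a maximum principle for $\zeta_t=\tilde\chi^{-1}H^{-\alpha}$ after first bounding the rescaled support function $\tilde\chi=e^{-t/2^{\alpha}}\chi$ above and below (Lemma \ref{lem6.6}), and the preliminary decay rate $\delta_1=\tfrac{\alpha+1}{2}C_1^{1-\alpha}$ is built from that lower bound $C_1$. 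Without such a quantitative, $s$-independent lower bound on $H$ your bootstrap cannot even start. Relatedly, the paper runs the maximum principle on the scale-invariant quotient $\gamma_t=|\mathring A|^2H^{-2}$ and uses Neves' critical-point identities to give the gradient terms a sign; note also that \eqref{A-evl}, which you propose to use, is not in parabolic form (its second-order term is $-2\mathring h_j^i\nabla^j\nabla_iH^{-\alpha}$, not a Laplacian of $|\mathring A|^2$), so no maximum principle applies to it directly. Finally, the lower bound for $H^2-4$ is not symmetric to the upper bound: the reaction term $(2^{1-\alpha}-H^{1-\alpha})\phi_t$ has the wrong sign when $\phi_t<0$, and the paper needs a separate two-stage argument (via $\eta_t=e^{t}(H^2-4)$, then $\phi_t$) fed by the already-established weighted decay of $|\mathring A|^2$.

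Second, the gradient estimate cannot be obtained by controlling the nonlinear remainders ``via the uniform $C^k$-bounds from Step 1.'' After commuting derivatives, the evolution of $|\nabla\mathring A|^2$ contains the term $C(|\nabla^2\mathring A|+|\nabla^2H|)|\nabla\mathring A|^2$; bounding $|\nabla^2\mathring A|$ by an unweighted uniform constant adds a non-small positive constant to the linear coefficient and destroys the sharp rate $3\cdot 2^{1-\alpha}$, while absorbing it into the good term $-2\alpha H^{-\alpha-1}|\nabla^2\mathring A|^2$ by Cauchy--Schwarz produces $+C|\nabla\mathring A|^4$ with a positive, non-small constant, to which the maximum principle does not apply. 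The paper's resolution is the auxiliary function $\psi_t=\log|\nabla\mathring A|^2+K|\mathring A|^2$: at a critical point of $\psi_t$ the bad quartic term is paired against $K$ times the good term $-2^{-\alpha}\alpha|\nabla\mathring A|^2$ coming from the evolution of $|\mathring A|^2$, and the whole coefficient is made negative by choosing $K$ appropriately and $s_0$ large enough that $|\tilde M_s|^{-1}$ is small. This device (or an equivalent smallness/continuity argument exploiting that $|\nabla A|^2\lesssim|\tilde M_s|^{-3}$ initially) is essential and is missing from your proposal.
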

The key point of the estimates \eqref{est-decay} is that the constant $C$ is independent of the parameter $s$. The proof of \eqref{est-decay} is technical and will be given in the next subsection.

We now complete the proof of Theorem \ref{thm-ex-H} using the estimate \eqref{est-decay}. By \eqref{Q-evl}, we have
\begin{align*}
\frac{d}{dt}Q(M_t^s) =& ~|M_t^s|\int_{M_t^s}|\mathring{A}|^2\left(H^{1-\alpha}-\frac 1{|M_t^s|}\int_{M_t^s}H^{1-\alpha}\right)d\mu_t\\
   & \quad +\alpha|M_t^s|\int_{M_t^s}H^{-1-\alpha}|\nabla H|^2d\mu_t.
\end{align*}
Since the volume element evolves by \eqref{vol-evl}, then using the estimate \eqref{est-decay} we have
\begin{equation*}
  |M_t^s|\leq \tilde{C}|\tilde{M}_s|e^{2^{(1-\alpha)}\cdot t}
\end{equation*}
and
\begin{align}\label{Qs-evl2}
\frac{d}{dt}Q(M_t^s) \leq & ~\tilde{C}|\tilde{M}_s|^{-1}e^{-2^{(1-\alpha)} \cdot t},
\end{align}
where $\tilde{C}$ is a constant independent of $s$. In view of \eqref{Qs-lim1}, we can choose $s_0$  large enough such that for any $s\geq s_0$,
\begin{equation*}
  Q(\tilde{M}_s)<-\frac{c_0}2,\quad \tilde{C} |\tilde{M}_s|^{-1}\leq 2^{-1-\alpha}c_0.
\end{equation*}
Integrating \eqref{Qs-evl2} gives that
\begin{equation}\label{Qs-lim2}
  \lim_{t\to\infty}Q(M_t^s)\leq -\frac{c_0}4<0.
\end{equation}
By the convergence result of Theorem  1.2 in \cite{Scheuer2015-1}, the solution $M_t^s$ exists for all time and is given by the radial graph of the function
\begin{equation*}
  u(t,\theta)=\frac t{2^{\alpha}}+f(\theta)+o(1),\quad\text{as } t\to\infty,
\end{equation*}
where $f(\theta)$ is a smooth function on $\mathbb{S}^2$. Then by combining \eqref{Qs-lim2} and Proposition \ref{prop-hung-wang}, we obtain  that the limit of the rescaled metric $e^{-2^{(1-\alpha)}\cdot t}g_t$ is not a round metric on $\mathbb{S}^2$. Note that
\begin{equation*}
 \lim_{t\to\infty} e^{-2^{(1-\alpha)}\cdot t}g_t=\lim_{t\to\infty} e^{-2^{(1-\alpha)}\cdot t}\sinh^2 u(t,\theta) g_{\mathbb{S}^2}=\frac 14e^{2f(\theta)}g_{\mathbb{S}^2}.
\end{equation*}
The fact that $e^{2f(\theta)}g_{\mathbb{S}^2}$ is not a round metric implies that $e^{-f(\theta)}$ is not a linear combination of constants and first eigenfunctions of $\mathbb{S}^2$ (cf. Lemma 4 in \cite{Hung-Wang2015inverse}).

\subsection{Proof of Proposition \ref{lem-decay}}\label{sec:7-1}

For the simplicity of the notations, in this subsection, we write $\tilde{M}_s$ as $M_0$, and $M_t^s$ as $M_t$.
First, we have the following evolution equations for $g, \chi, H$ and $A$, which follow from direct calculation using \eqref{2.2}, \eqref{2.4} and \eqref{s2:evl-chi}.
\begin{lem}
Along the flow \eqref{flow-H^k}, we have
\begin{itemize}
\item[(1)] The induced metric on $M_t$ evolves by
\begin{equation}\label{flow-H^k-g}
  \partial_tg_{ij}~=~2H^{-\alpha}h_{ij}.
\end{equation}
  \item[(2)] The support function $\chi=\bar{g}(\sinh u\partial_r,\nu)$ satisfies
  \begin{equation}\label{flow-H^k-chi}
    \partial_t\chi~=~\alpha H^{-\alpha-1}\Delta\chi+\alpha H^{-\alpha-1}|A|^2\chi+(1-\alpha)H^{-\alpha}\cosh u.
      \end{equation}
  \item[(3)] The mean curvature $H$ satisfies
  \begin{equation}\label{flow-H^k-H}
  \begin{aligned}
    \partial_tH=& \alpha H^{-\alpha-1}\Delta H-\alpha(\alpha+1)H^{-\alpha-2}|\nabla H|^2-|A|^2H^{-\alpha}+2H^{-\alpha}\\
   =&-\Delta H^{-\alpha}-|A|^2H^{-\alpha}+2H^{-\alpha}.
    \end{aligned}
  \end{equation}
  \item[(4)] The second fundamental form $A=(h_{ij})$ satisfies
  \begin{equation}\label{flow-H^k-A0}
   \begin{aligned}
    \partial_th_{ij}~=~&\alpha H^{-\alpha-1}\Delta h_{ij}-\alpha(\alpha+1)H^{-\alpha-2}\nabla_iH\nabla_jH+\alpha H^{-\alpha-1}|A|^2h_{ij}\\
    &+(1-\alpha)H^{-\alpha}h_i^kh_{kj}+(1-\alpha)H^{-\alpha}g_{ij}+2\alpha H^{-\alpha-1}h_{ij},
    \end{aligned}
  \end{equation}
  \begin{equation}\label{flow-H^k-A}
    \begin{aligned}
    \partial_t|A|^2~=~&\alpha H^{-\alpha-1}\Delta |A|^2-2\alpha H^{-\alpha-1}|\nabla A|^2-2\alpha(\alpha+1)H^{-\alpha-2}h_{ij}\nabla^iH\nabla^jH\\
    &+2\alpha H^{-\alpha-1}|A|^4-2(1+\alpha)H^{-\alpha}h_i^kh_k^jh_j^i\\
    &+2(1-\alpha)H^{-\alpha+1}+4\alpha H^{-\alpha-1}|A|^2.
    \end{aligned}
  \end{equation}
\end{itemize}
\end{lem}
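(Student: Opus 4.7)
The plan is to verify each of the four evolution equations by directly specializing the general formulas \eqref{2.2}, \eqref{2.4}, and \eqref{s2:evl-chi} to the present setting $F=H$, $\Phi=-H^{-\alpha}$, $\kappa=-1$, and then reading off \eqref{flow-H^k-H} and \eqref{flow-H^k-A} by tracing and differentiating. The setup: since $F=H=g^{ij}h_{ij}$ is linear in the Weingarten map, $\dot F^{ij}=g^{ij}$ and $\ddot F^{ij,kl}=0$; since $\Phi(r)=-r^{-\alpha}$, one has $\dot\Phi=\alpha H^{-\alpha-1}$ and $\ddot\Phi=-\alpha(\alpha+1)H^{-\alpha-2}$; in $\mathbb{H}^3$ the warping functions are $s_\kappa(u)=\sinh u$ and $s'_\kappa(u)=\cosh u$; and on a surface $g^{ij}g_{ij}=2$.

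With these substitutions, \eqref{flow-H^k-g} is immediate from \eqref{2.2}. For \eqref{flow-H^k-chi}, in \eqref{s2:evl-chi} the Laplacian term becomes $\alpha H^{-\alpha-1}\Delta\chi$, the zeroth-order $\dot\Phi\dot F^{ij}h_i^k h_{kj}\chi$ term becomes $\alpha H^{-\alpha-1}|A|^2\chi$, and $-(\Phi+\dot\Phi F)s'_\kappa(u)=-(-H^{-\alpha}+\alpha H^{-\alpha})\cosh u=(1-\alpha)H^{-\alpha}\cosh u$. For \eqref{flow-H^k-A0}, the $\ddot F$ contribution in \eqref{2.4} vanishes, while $\ddot\Phi\dot F^{kl}\nabla_i h_{kl}\dot F^{mn}\nabla_j h_{mn}$ produces $-\alpha(\alpha+1)H^{-\alpha-2}\nabla_iH\nabla_jH$; the remaining zero-order terms collapse as $\dot\Phi\dot F^{kl}h_{ij}h_k^p h_{pl}=\alpha H^{-\alpha-1}|A|^2 h_{ij}$, $-(\dot\Phi F+\Phi)h_i^k h_{kj}=(1-\alpha)H^{-\alpha}h_i^k h_{kj}$, and the $\kappa=-1$ curvature contribution reduces (using $\dot F^{kl}g_{kl}=2$) to $(1-\alpha)H^{-\alpha}g_{ij}+2\alpha H^{-\alpha-1}h_{ij}$.

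For \eqref{flow-H^k-H}, I take the trace of \eqref{flow-H^k-A0} and use $\partial_t H=(\partial_t g^{ij})h_{ij}+g^{ij}\partial_t h_{ij}=-2H^{-\alpha}|A|^2+g^{ij}\partial_t h_{ij}$. The $|A|^2$ terms combine with coefficient $-2+\alpha+(1-\alpha)=-1$ (using $g^{ij}h_i^k h_{kj}=|A|^2$), and the $g_{ij}$-trace contributes $2(1-\alpha)+2\alpha=2$, giving the first form of \eqref{flow-H^k-H}; the alternative form follows from the identity $-\Delta H^{-\alpha}=\alpha H^{-\alpha-1}\Delta H-\alpha(\alpha+1)H^{-\alpha-2}|\nabla H|^2$. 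For \eqref{flow-H^k-A}, I differentiate $|A|^2=g^{ik}g^{jl}h_{ij}h_{kl}$, insert $\partial_t g^{ik}=-2H^{-\alpha}h^{ik}$ and \eqref{flow-H^k-A0}, and then rewrite the Laplacian using
\[
2\alpha H^{-\alpha-1}h^{ij}\Delta h_{ij}=\alpha H^{-\alpha-1}\Delta|A|^2-2\alpha H^{-\alpha-1}|\nabla A|^2
\]
which comes from $\Delta|A|^2=2h^{ij}\Delta h_{ij}+2|\nabla A|^2$. Consolidating the $h^{ij}h_i^k h_{kj}=\mathrm{tr}(A^3)$ contributions (which pick up $-4H^{-\alpha}+2(1-\alpha)H^{-\alpha}=-2(1+\alpha)H^{-\alpha}$) together with the $\alpha H^{-\alpha-1}|A|^4$, $2(1-\alpha)H^{-\alpha+1}$ (from $h^{ij}g_{ij}=H$), $4\alpha H^{-\alpha-1}|A|^2$, and $-2\alpha(\alpha+1)H^{-\alpha-2}h^{ij}\nabla_iH\nabla_jH$ terms produces \eqref{flow-H^k-A}.

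The only real obstacle is the algebraic bookkeeping in the derivation of \eqref{flow-H^k-A}: one must carefully track the coefficient of $\mathrm{tr}(A^3)=h^{ij}h_i^k h_{kj}$ arising from both $(\partial_t g^{ik})h_{kl}h^{jl}h_{ij}$ and the $h_i^k h_{kj}$ piece of \eqref{flow-H^k-A0}, and handle the sign correctly in the rewriting $\Delta|A|^2=2h^{ij}\Delta h_{ij}+2|\nabla A|^2$. All other steps are purely substitutional once one has recorded the derivatives of $\Phi$ and $F$ above.
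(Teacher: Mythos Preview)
Your proposal is correct and follows exactly the approach indicated by the paper, which simply states that the lemma ``follows from direct calculation using \eqref{2.2}, \eqref{2.4} and \eqref{s2:evl-chi}.'' Your detailed substitutions and coefficient tracking are all accurate and in fact give more detail than the paper itself provides.
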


\begin{lem}\label{lem6.6}
 Let $\tilde{\chi}=e^{-\frac t{2^{\alpha}}}\chi$. Then for each $t\geq0$, we have
\begin{equation}\label{lem-7-4-ineqn}
  \frac 12e^{\bar{r}_0}\geq \tilde{\chi}(x,t)\geq \min \tilde{\chi}(\cdot,0)\geq \epsilon_0\sinh\underline{r}_0
\end{equation}
on the solution $M_t$ of \eqref{flow-H^k}.
\end{lem}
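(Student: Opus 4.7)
The plan is to treat the upper and lower bounds separately: the upper bound by direct sphere comparison, and the lower bound by applying the parabolic minimum principle to the evolution equation of $\tilde{\chi}$. Substituting $\tilde{\chi} = e^{-t/2^{\alpha}}\chi$ into \eqref{flow-H^k-chi} yields
\begin{equation*}
\partial_t \tilde{\chi} - \alpha H^{-\alpha-1}\Delta\tilde{\chi} = \left(\alpha H^{-\alpha-1}|A|^2 - \tfrac{1}{2^{\alpha}}\right)\tilde{\chi} + (1-\alpha) H^{-\alpha} e^{-t/2^{\alpha}}\cosh u.
\end{equation*}

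For the upper bound, since $v \geq 1$ we have $\chi = v^{-1}\sinh u \leq \sinh u$; Lemma \ref{lem-5-1}(1) gives $u(t,\theta) \leq \rho(t,\bar{r}_0)$ and \eqref{rho-t-H} gives $\rho(t,\bar{r}_0) \leq \bar{r}_0 + t/2^{\alpha}$, so combined with $\sinh x \leq \tfrac{1}{2}e^x$,
\begin{equation*}
\tilde{\chi}(x,t) \leq e^{-t/2^{\alpha}}\sinh\bigl(\bar{r}_0 + t/2^{\alpha}\bigr) \leq \tfrac{1}{2}e^{\bar{r}_0}.
\end{equation*}
For the lower bound I would evaluate the evolution equation at a spatial minimum of $\tilde{\chi}$, where $\Delta\tilde{\chi} \geq 0$. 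The two-dimensional identity $|A|^2 = \lambda_1^2+\lambda_2^2 \geq H^2/2$ and the rewriting $e^{-t/2^{\alpha}}\cosh u = v\coth u\cdot\tilde{\chi} \geq \tilde{\chi}$ (using $v\geq 1$ and $\coth u \geq 1$) reduce the task to verifying the pointwise inequality
\begin{equation*}
\frac{\alpha H}{2} + (1-\alpha) \geq \left(\frac{H}{2}\right)^{\alpha},
\end{equation*}
which is the weighted AM--GM inequality $a^{\alpha}b^{1-\alpha} \leq \alpha a + (1-\alpha)b$ applied to $a = H/2$ and $b=1$. Hence the right-hand side of the evolution equation is non-negative at the minimum, so $\min_{M_t}\tilde{\chi}$ is non-decreasing in $t$ and is therefore bounded below by its value at $t=0$. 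That initial value equals $\min(v^{-1}\sinh u)|_{t=0} \geq \epsilon_0\sinh\underline{r}_0$, by $v^{-1} = \bar{g}(\partial_r,\nu) \geq \epsilon_0$ from \eqref{Ms-condi} and $u \geq \underline{r}_0$ by definition of $\underline{r}_0$.

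The only non-routine point is the recognition that the weighted AM--GM inequality is exactly what allows the reaction term $\alpha H^{-\alpha-1}|A|^2\tilde{\chi}$ together with the hyperbolic contribution $(1-\alpha)H^{-\alpha}\cosh u$ to absorb the negative term $-\tilde{\chi}/2^{\alpha}$. This is what dictates the specific rescaling exponent $1/2^{\alpha}$, matching the asymptotic expansion $u \sim t/2^{\alpha}$ from Theorem \ref{thm1.2}.
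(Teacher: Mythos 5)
Your proposal is correct and follows essentially the same route as the paper: the upper bound via the $C^0$ estimate $u\leq \bar r_0+t/2^{\alpha}$, and the lower bound via the maximum principle applied to the evolution equation of $\tilde\chi$, using $|A|^2\geq H^2/2$ and $e^{-t/2^{\alpha}}\cosh u\geq\tilde\chi$ to reduce to the pointwise inequality $\tfrac{\alpha}{2}H^{1-\alpha}+(1-\alpha)H^{-\alpha}\geq 2^{-\alpha}$. The only cosmetic difference is that you verify this inequality by weighted AM--GM while the paper locates the minimum of the left-hand side at $H=2$ by calculus; these are the same fact.
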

\proof
First, since $\alpha\in (0,1)$, we know from \eqref{flow-H^k-chi} that $\tilde{\chi}=e^{-\frac t{2^{\alpha}}}\chi>0$ is preserved along the flow.  By using \eqref{flow-H^k-chi}, we have that
\begin{equation}\label{flow-H^k-chi-td}
    \partial_t\tilde{\chi}~=~\alpha H^{-\alpha-1}\Delta\tilde{\chi}+\alpha H^{-\alpha-1}|A|^2\tilde{\chi}+(1-\alpha)H^{-\alpha}e^{-\frac t{2^{\alpha}}}\cosh u-\frac 1{2^{\alpha}}\tilde{\chi}.
      \end{equation}
Since
\begin{equation*}
  \cosh u\geq \sinh u\geq \chi,\quad |A|^2\geq H^2/2,\quad \alpha\in(0,1),
\end{equation*}
\eqref{flow-H^k-chi-td} implies that
\begin{equation}\label{flow-H^k-chi-td2}
    \partial_t\tilde{\chi}~\geq~\alpha H^{-\alpha-1}\Delta\tilde{\chi}+\tilde{\chi}\left(\frac{\alpha} 2 H^{-\alpha+1}+(1-\alpha)H^{-\alpha}-\frac 1{2^{\alpha}}\right).
      \end{equation}
We claim that the terms in the bracket of \eqref{flow-H^k-chi-td2} are always nonnegative for $\forall~H>0, \forall~\alpha\in(0,1)$. In fact, for each fixed $\alpha\in(0,1)$, the minimum of
\begin{equation*}
  \frac{\alpha} 2 H^{-\alpha+1}+(1-\alpha)H^{-\alpha}-\frac 1{2^{\alpha}}
\end{equation*}
is achieved at $H=2$. Therefore,
\begin{equation*}
  \frac{\alpha} 2 H^{-\alpha+1}+(1-\alpha)H^{-\alpha}-\frac 1{2^{\alpha}}~\geq  ~\frac{\alpha} 2 \cdot2^{-\alpha+1}+(1-\alpha)2^{-\alpha}-\frac 1{2^{\alpha}}=0.
\end{equation*}
 Since $\tilde{\chi}$ is always positive, \eqref{flow-H^k-chi-td2} implies that
\begin{equation*}
  \tilde{\chi}(x,t)~\geq~\min \tilde{\chi}(\cdot,0)\geq \epsilon_0\sinh\underline{r}_0.
\end{equation*}
On the other hand, by using \eqref{rho-t-H} and \eqref{ut-H-C0}, we know that  $u(t)\leq \rho(t,\bar{r}_0)\leq \bar{r}_0+\frac t{2^{\alpha}}$, so we obtain that
\begin{equation*}
  \tilde{\chi}(x,t)=e^{-\frac t{2^{\alpha}}}\sinh u(t) v^{-1}\leq \frac 12 e^{u(t)-\frac t{2^{\alpha}}} v^{-1}\leq \frac {e^{\bar{r}_0}}2.
\end{equation*}
\endproof

\begin{lem}\label{lem-ap-H}
The mean curvature $H$ of $M_t$ is bounded from below by a positive constant $C_1=C_1(\alpha,\epsilon_0,\bar{r}_0-\underline{r}_0)$ depending only on $\alpha,\epsilon_0$, and $\bar{r}_0-\underline{r}_0$.
\end{lem}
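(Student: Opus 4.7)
The plan is to run the parabolic maximum principle on the auxiliary quantity
\begin{equation*}
Q \;:=\; \frac{H^{-\alpha}}{\tilde\chi}, \qquad \tilde\chi = e^{-t/2^{\alpha}}\chi,
\end{equation*}
which encodes the ratio of the (rescaled) speed to the (rescaled) support function. Writing $H^{-\alpha}=Q\tilde\chi$ and combining the evolution equations \eqref{flow-H^k-H} for $H^{-\alpha}$ and \eqref{flow-H^k-chi-td} for $\tilde\chi$, I would first derive an evolution equation for $Q$. The crucial cancellation is that the $|A|^2$ contributions from the two evolutions combine as $\alpha H^{-\alpha-1}|A|^2(H^{-\alpha}-Q\tilde\chi)=0$, eliminating the only term that scales with curvature. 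After dividing through by $\tilde\chi$, the equation takes the shape
\begin{equation*}
\partial_t Q - \alpha H^{-\alpha-1}\Delta Q - 2\alpha H^{-\alpha-1}\nabla\log\tilde\chi\cdot\nabla Q \;=\; \frac{Q}{2^{\alpha}} \;-\; (1-\alpha)\,\frac{QH^{-\alpha}\cosh u}{\chi} \;-\; \frac{2\alpha H^{-2\alpha-1}}{\tilde\chi}.
\end{equation*}

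Next I would evaluate this at a maximum point $x_t$ of $Q(\cdot,t)$. There $\nabla Q = 0$ and $\Delta Q\leq 0$. Since $\chi = v^{-1}\sinh u \leq \sinh u \leq \cosh u$, the elementary inequality $\cosh u/\chi \geq 1$ holds; together with $QH^{-\alpha}=Q^2\tilde\chi$ and the lower bound $\tilde\chi \geq \epsilon_0\sinh\underline r_0$ from Lemma \ref{lem6.6}, dropping the non-positive last term yields at the maximum
\begin{equation*}
\frac{d}{dt}Q_\ast(t) \;\leq\; Q_\ast(t)\Big(\tfrac{1}{2^{\alpha}} - (1-\alpha)\,Q_\ast(t)\,\epsilon_0\sinh\underline r_0\Big), \qquad Q_\ast(t):=\max_{M_t}Q.
\end{equation*}
An ODE comparison then gives
\begin{equation*}
Q_\ast(t)\;\leq\;\max\!\Big(Q_\ast(0),\,\frac{1}{(1-\alpha)\,2^{\alpha}\,\epsilon_0\sinh\underline r_0}\Big).
\end{equation*}

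Finally, I would package this into a lower bound on $H$. The initial condition \eqref{Ms-condi} gives $Q_\ast(0) \leq \epsilon_0^{-\alpha-1}/\sinh\underline r_0$, so both alternatives in the maximum above are bounded by $C(\alpha,\epsilon_0)/\sinh\underline r_0$. Combining with the upper bound $\tilde\chi\leq \tfrac12 e^{\bar r_0}$ from Lemma \ref{lem6.6} yields
\begin{equation*}
H^{-\alpha} = Q\,\tilde\chi \;\leq\; \frac{C(\alpha,\epsilon_0)\,e^{\bar r_0}}{2\sinh\underline r_0}.
\end{equation*}
Choosing $s_0$ large enough so that $\underline r_0\geq 1$ (which ensures $\sinh\underline r_0\geq \tfrac14 e^{\underline r_0}$), this becomes $H^{-\alpha}\leq C'(\alpha,\epsilon_0)\,e^{\bar r_0-\underline r_0}$, which is exactly the desired uniform lower bound $H\geq C_1(\alpha,\epsilon_0,\bar r_0-\underline r_0)$.

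The main obstacle I anticipate is choosing the \emph{right} auxiliary quantity: the naive candidate $H^{-\alpha}/\chi$ is monotone at its maximum but yields a bound that degenerates as $t\to\infty$ because $\chi$ grows like $e^{t/2^{\alpha}}$. The rescaling by $e^{-t/2^{\alpha}}$ is precisely what produces, after the $|A|^2$ cancellation, a clean quadratic-in-$Q$ negative term on the right-hand side that is strong enough (thanks to the lower bound on $\tilde\chi$) to dominate the linear bad term $Q/2^{\alpha}$; identifying this cancellation and the correct scaling is the one delicate point, after which the argument is a standard maximum principle together with the two-sided bounds for $\tilde\chi$ already supplied by Lemma \ref{lem6.6}.
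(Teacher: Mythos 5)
Your proposal is correct and follows essentially the same route as the paper: the paper's proof also applies the maximum principle to $\zeta_t=\tilde\chi^{-1}H^{-\alpha}$, derives the identical evolution equation, and concludes via the two-sided bounds on $\tilde\chi$ from Lemma \ref{lem6.6}. The only (immaterial) difference is which non-positive reaction term is retained to dominate $+2^{-\alpha}\zeta_t$ at the spatial maximum — you keep the quadratic term $(\alpha-1)\zeta_t^2e^{-t/2^{\alpha}}\cosh u$, while the paper keeps $-2\alpha H^{-\alpha-1}\zeta_t=-2\alpha(\zeta_t\tilde\chi)^{\frac{\alpha+1}{\alpha}}\zeta_t$; both work for $\alpha\in(0,1)$ and yield a bound of the same form.
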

\proof
Let $\zeta_t(x)=\tilde{\chi}^{-1}(x,t)H^{-\alpha}(x,t)$, where $\tilde{\chi}=e^{-\frac t{2^{\alpha}}}\chi$ is defined in Lemma \ref{lem6.6}. Combining \eqref{flow-H^k-H} and \eqref{flow-H^k-chi-td} yields
\begin{align*}
  \partial_t\zeta_t =~& \alpha H^{-\alpha-1}\Delta\zeta_t+2\alpha H^{-\alpha-1}\nabla\zeta_t \cdot\nabla\ln\tilde{\chi}\\
   & +(\alpha-1)\zeta_t^2e^{-\frac t{2^{\alpha}}}\cosh u-2\alpha H^{-\alpha-1}\zeta_t+\frac 1{2^{\alpha}}\zeta_t\\
   \leq~&\alpha H^{-\alpha-1}\Delta\zeta_t+2\alpha H^{-\alpha-1}\nabla\ln\tilde{\chi}\cdot\nabla\zeta_t+\left(-2\alpha \zeta_t^{\frac{\alpha+1}{\alpha}}\tilde{\chi}^{\frac{\alpha+1}{\alpha}}+\frac 1{2^{\alpha}}\right)\zeta_t,
\end{align*}
where we used the assumption $0<\alpha<1$. Then applying the maximum principle to the differential inequality  above, we conclude that
\begin{align*}
  \zeta_t(x)\leq ~&\max\left\{\max_{M_0}\zeta_0, 2^{-\alpha}\alpha^{-\frac{\alpha}{\alpha+1}}(\min \tilde{\chi})^{-1}\right\}\nonumber\\
  \leq ~& (\min \tilde{\chi}(\cdot,0))^{-1}\max\left\{\epsilon_0^{-\alpha},2^{-\alpha}\alpha^{-\frac{\alpha}{\alpha+1}}\right\},
\end{align*}
where $\epsilon_0$ is the same constant with that in \eqref{Ms-condi}.
By the definition of $\zeta_t$ and the estimate in Lemma \ref{lem6.6}, we have that
\begin{align}\label{s7-H-lbd}
  H(x,t)\geq ~&(\frac{\max\tilde{\chi}(\cdot,t)}{\min \tilde{\chi}(\cdot,0)})^{-\frac 1{\alpha}}\left(\max\{\epsilon_0^{-\alpha},2^{-\alpha}\alpha^{-\frac{\alpha}{\alpha+1}}\}\right)^{-\frac 1{\alpha}}\nonumber\\
  \geq&\epsilon_0^{\frac 1{\alpha}}e^{-\frac 1{\alpha}(\bar{r}_0-\underline{r}_0)}\left(1-e^{-2\underline{r}_0}\right)^{\frac 1{\alpha}}\cdot \min\{\epsilon_0,2\alpha^{\frac{1}{\alpha+1}}\}\nonumber\\
  \geq &2^{-\frac 1{\alpha}}\epsilon_0^{\frac 1{\alpha}}e^{-\frac 1{\alpha}(\bar{r}_0-\underline{r}_0)}\cdot \min\{\epsilon_0,2\alpha^{\frac{1}{\alpha+1}}\},
\end{align}
since $s_0$ can be  chosen large enough in \eqref{Ms-condi} and then $1-e^{-2\underline{r}_0}$ will  be greater than $1/2$. Then the lemma follows by defining $C_1$ as the right hand side of \eqref{s7-H-lbd}.
\endproof

In the following, we prove Proposition  \ref{lem-decay} by proving Lemma \ref{lem6.8} and Lemma \ref{lem6.9} (Note that in the proof of Lemma \ref{lem6.8}, we obtain that $H$ is bounded from below and above by positive constants which do not depend on $s$; hence, $|H-2|$ and $|H^2-4|$ have the same decay rates).
We note that in the proof of Lemma \ref{lem6.8} and Lemma \ref{lem6.9}, we always choose $s_0$ large enough such that $|M_0|>1$.

\begin{lem}\label{lem6.8}
There exist constants $s_0=s_0(\alpha,\epsilon_0,\bar{r}_0-\underline{r}_0, E)$ and $C_2=C_2(\alpha,\epsilon_0,\bar{r}_0-\underline{r}_0,E)$ such that for any $s\geq s_0$,
\begin{align}
  |M_0|^2|\mathring{A}|^2(x,t)\leq~ &C_2e^{-2^{(2-\alpha)}\cdot t},  \label{s7-As-decay}\\
  |M_0||H^2(x,t)-4| \leq~& C_2e^{-2^{(1-\alpha)}\cdot t}\label{s7-H-decay0}
\end{align}
on $M_t$ along the flow \eqref{flow-H^k}.
\end{lem}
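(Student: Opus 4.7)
The proof proceeds by a bootstrap adapted to the non-scale-invariant flow \eqref{flow-H^k} with $\alpha\in(0,1)$, extending Neves' strategy \cite{Neves-2010} from the $\alpha=1$ case. The first goal is to establish uniform two-sided bounds on $H$ and $|A|$ that are independent of $s$. The lower bound on $H$ is Lemma \ref{lem-ap-H}. For the upper bound, I would apply the maximum principle to \eqref{flow-H^k-H}: at a spatial maximum of $H$, one has $-\Delta H^{-\alpha}\leq 0$, and using $|A|^2\geq H^2/2$ yields $\partial_tH_{\max}\leq H^{-\alpha}(2-H_{\max}^2/2)$, so $H\leq\max\{2,\max_{M_0}H\}\leq 3$ by \eqref{Ms-condi}. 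Since the initial pinching $|\mathring{A}|^2<H^2/4$ implies strict convexity, Theorem \ref{thm3.1} propagates the ratio $\lambda_2\leq\beta\lambda_1$, which combined with the $H$-bounds gives uniform two-sided bounds on $\lambda_i$, $|A|^2$, and $|\mathring{A}|^2$ depending only on $\alpha,\epsilon_0,\bar{r}_0-\underline{r}_0,E$.

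Next I would derive pointwise evolution equations for $|\mathring{A}|^2$ and for $W:=H^2-4$ by combining \eqref{flow-H^k-H}, \eqref{flow-H^k-A0}, and \eqref{flow-H^k-A}. Using the two-dimensional identities $|A|^2=H^2/2+|\mathring{A}|^2$ and $\operatorname{tr}(A^3)=H^3/4+3H|\mathring{A}|^2/2$, a direct computation yields, schematically,
\begin{equation*}
\partial_t|\mathring{A}|^2 \,=\, \alpha H^{-\alpha-1}\Delta|\mathring{A}|^2 \,+\, H^{-\alpha-1}\bigl(-(2+\alpha)H^2+4\alpha\bigr)|\mathring{A}|^2 \,+\, \mathcal{R}_1,
\end{equation*}
\begin{equation*}
\partial_t W \,=\, \alpha H^{-\alpha-1}\Delta W \,-\, H^{1-\alpha}\,W \,-\, 2H^{1-\alpha}|\mathring{A}|^2 \,+\, \mathcal{R}_2,
\end{equation*}
where $\mathcal{R}_1,\mathcal{R}_2$ collect $O(|\mathring{A}|^4)$ and gradient-squared contributions. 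Evaluating the principal reaction coefficients at $H=2$ gives $-2^{2-\alpha}$ for $|\mathring{A}|^2$ and $-2^{1-\alpha}$ for $W$, exactly matching the rates in \eqref{s7-As-decay}--\eqref{s7-H-decay0} and explaining why they are sharp.

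The remainder of the proof is a three-stage bootstrap. Stage 1: integrate \eqref{A-ev2} over $M_t$ and use the lower bound on $H$ to obtain $\int_{M_t}|\mathring{A}|^2\,d\mu_t\leq e^{-ct}\int_{M_0}|\mathring{A}|^2\,d\mu_0$ with $c>0$ uniform in $s$; by \eqref{neves-est1} the initial integral is at most $C|M_0|^{-1}$, which is where the required $|M_0|$-dependence enters. Stage 2: upgrade this $L^1$ decay to a pointwise sub-sharp decay via a Moser/De Giorgi iteration based on the Michael-Simon Sobolev inequality on $M_t$, whose constant is controlled by the uniform bound on $|A|$. Stage 3: feed the pointwise smallness of $|\mathring{A}|$ and $|H-2|$ back into $\mathcal{R}_1,\mathcal{R}_2$, apply the maximum principle to the barrier functions $|M_0|^2\,e^{(2^{2-\alpha}-\varepsilon)t}|\mathring{A}|^2$ and $|M_0|\,e^{(2^{1-\alpha}-\varepsilon)t}|W|$ to absorb the (now small) errors into the leading reaction, and iterate while driving $\varepsilon\to 0$.

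The main obstacle is the loss of scale invariance when $\alpha<1$: unlike the $\alpha=1$ case handled by Neves, the naturally-scaled barriers are no longer exact solutions, and auxiliary quantities like $e^{ct}|\mathring{A}|^2$ pick up non-negligible error terms whose sign is not immediately favourable. The technical heart of the argument is therefore Stage 3, which must be executed with care to ensure both (a) that the cumulative error after finitely many bootstrap iterations does not destroy the leading decay, and (b) that every constant produced remains independent of $s$. Property (a) relies on the improved decay of the error quantities themselves at each iteration; property (b) follows because every input -- \eqref{neves-est1}, the uniform pinching from Theorem \ref{thm3.1}, and Andrews' \cite{andrews2004fully} two-dimensional $C^{2,\gamma}$ estimate needed for higher regularity -- depends only on $\alpha,\epsilon_0,\bar{r}_0-\underline{r}_0,E$, never on $s$.
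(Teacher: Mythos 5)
Your overall architecture is the right one and matches the paper's: uniform two-sided bounds on $H$ (your maximum-principle argument for the upper bound and Lemma \ref{lem-ap-H} for the lower bound are both fine), identification of the sharp rates $2^{2-\alpha}$ and $2^{1-\alpha}$ by linearizing the reaction terms at $H=2$, and a bootstrap in which a preliminary exponential decay of $|\mathring{A}|^2$ forces $H\to 2$ quantitatively, whose time-integrable corrections then yield the sharp rates, with all constants traced back to \eqref{neves-est1} and \eqref{Ms-condi}. However, there is a genuine gap at the technical heart of the argument: the gradient terms you bundle into $\mathcal{R}_1$ are \emph{not} errors that become small and can be ``absorbed into the leading reaction.'' The evolution of $|\mathring{A}|^2$ contains $-2\alpha(\alpha+1)H^{-\alpha-2}\mathring{h}_{ij}\nabla^iH\nabla^jH$, which has no sign and no pointwise control by $|\mathring{A}|^2$ (it involves $|\nabla H|^2$, which is not small relative to $|\mathring{A}|^2$ at a given point). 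The maximum principle in your Stage 3 does not close until you either (a) dominate this term by the good term $-2\alpha H^{-\alpha-1}|\nabla \mathring{A}|^2$ using $|\nabla H|^2\le 4|\nabla\mathring{A}|^2$ together with a propagated pointwise smallness of $|\mathring{A}|/H$, or (b) do what the paper does: work with the scale-invariant ratio $\gamma_t=|\mathring{A}|^2H^{-2}$ and use Neves' critical-point identities to compute the full gradient contribution $Q$ exactly, arriving at $Q\le 2\alpha H^{-\alpha-3}|\nabla H|^2(\gamma_t-\tfrac14)$ (see \eqref{gammat-2}), which is nonpositive precisely under the preserved pinching $\gamma_t<\tfrac14$ from \eqref{Ms-condi}. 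This sign analysis of the gradient terms is the whole point of the paper's two-dimensional method (it is also the content of Theorem \ref{thm3.1}), and your proposal skips it.

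Two secondary issues. First, your Stages 1--2 ($L^1$ decay plus Moser iteration via Michael--Simon) are both unnecessary and under-justified: the ambient space is hyperbolic, so the Euclidean Michael--Simon inequality does not apply directly, and in any case the paper obtains pointwise decay purely by the maximum principle applied to $\gamma_t$, with no integral estimates. Second, the lower bound in \eqref{s7-H-decay0} is more delicate than your symmetric treatment of $|W|$ suggests: at a negative minimum of $\phi_t=e^{2^{(1-\alpha)}t}(H^2-4)$ the forcing $-2H^{1-\alpha}e^{2^{(1-\alpha)}t}|\mathring{A}|^2$ pushes the wrong way and the coefficient $(2^{1-\alpha}-H^{1-\alpha})$ multiplying the negative $\phi_t$ is also unfavourable when $H<2$; the paper needs the already-established sharp decay \eqref{s7-As-decay}, an improved lower bound $H\ge 1$, and an intermediate barrier $\eta_t=e^{t}(H^2-4)$ before running the final maximum principle. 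Your ``drive $\varepsilon\to 0$'' endgame should likewise be replaced by the observation that once the corrections to the reaction coefficient decay exponentially at \emph{any} positive rate, they are integrable in time and the sharp rate is attained in a single final application of the maximum principle, as in \eqref{gammat-4}--\eqref{s7-gam-decay3}.
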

\proof
We define $\gamma_t=|\mathring{A}|^2H^{-2}$, then $\gamma_t=|A|^2H^{-2}-\frac 12$. Combining \eqref{flow-H^k-H} and \eqref{flow-H^k-A}, we have
\begin{align}\label{s7-A-1}
  \partial_t(|A|^2H^{-2}) =& \alpha H^{-\alpha-1}\Delta(|A|^2H^{-2})+2(\alpha+1)H^{1-\alpha}(|A|^2H^{-2})^2\nonumber\\
   &+4(\alpha-1)H^{-\alpha-1}(|A|^2H^{-2}) +2(1-\alpha)H^{-\alpha-1}\nonumber\\
   &-2(\alpha+1)H^{-\alpha-2}h_i^kh_k^jh_j^i+Q,
\end{align}
where $Q$ is the gradient terms
\begin{align*}
  Q= & 4\alpha H^{-\alpha-4}\nabla |A|^2\cdot\nabla H-2\alpha H^{-\alpha-3}|\nabla A|^2 \\
   &-2\alpha(\alpha+1)H^{-\alpha-4}h_{ij}\nabla^iH\nabla^jH+2\alpha(\alpha-2)H^{-\alpha-5}|A|^2|\nabla H|^2.
\end{align*}
As the dimension of $M_t$ is $2$, then
\begin{align}\label{s7-A3}
  h_i^kh_k^jh_j^i= & \sum_{i=1}^2\lambda_i^3=\sum_{i=1}^2(\mathring{\lambda}_i+\frac H2)^3=\frac 32|\mathring{A}|^2H+\frac{H^3}4,
\end{align}
where $\mathring{\lambda}_1,\mathring{\lambda}_2$ are the eigenvalues of $\mathring{A}$.
Substituting $|A|^2H^{-2}=\gamma_t+\frac 12$ and \eqref{s7-A3} into \eqref{s7-A-1}, we have
\begin{align}\label{gammat-1}
  \partial_t\gamma_t =~& \alpha H^{-\alpha-1}\Delta\gamma_t+2(\alpha+1)H^{1-\alpha}\gamma_t(\gamma_t-\frac 12)+4(\alpha-1)H^{-\alpha-1}\gamma_t+Q.
\end{align}
We denote by $\{\nu_1,\nu_2\}$ an orthonormal eigenbasis for $\mathring{A}$ at the critical point $p$ of $\gamma_t$, and assume that $\mathring{A}(\nu_1,\nu_2)\geq 0$ without loss
of generality. Then at $p$ we have (cf. \cite[p.208]{Neves-2010})
\begin{equation}
\begin{aligned}
  &\nabla |\mathring{A}|^2=2|\mathring{A}|^2H^{-1}\nabla H,~\nabla|A|^2=2|A|^2H^{-1}\nabla H,\\
  &|\mathring{A}|H^{-1}\nabla H=\sqrt{2}\nabla\mathring{A}(\nu_1,\nu_1)=-\sqrt{2}\nabla\mathring{A}(\nu_2,\nu_2),\\
  &2|\nabla\mathring{A}(\nu_1,\nu_2)|^2=(\gamma_t+\frac{1}{2})|\nabla H|^2-\frac{2}{H}\mathring{h}_{ij}\nabla^iH\nabla^jH.
  \end{aligned}
\end{equation}
It follows that at $p$, we have
\begin{equation*}
\begin{aligned}
|\nabla \mathring{A}|^2&=|\mathring{A}|^2H^{-2}|\nabla H|^2+2|\nabla\mathring{A}(\nu_1,\nu_2)|^2\\
&=(2\gamma_t+\frac{1}{2})|\nabla H|^2-\frac{2}{H}\mathring{h}_{ij}\nabla^iH\nabla^jH
  \end{aligned}
\end{equation*}
and
\begin{equation*}
\begin{aligned}
|\nabla A|^2&=|\nabla \mathring{A}|^2+\frac{1}{2}|\nabla H|^2\\
&=(2\gamma_t+1)|\nabla H|^2-\frac{2}{H}\mathring{h}_{ij}\nabla^iH\nabla^jH.
  \end{aligned}
\end{equation*}
Then at $p$, the gradient term $Q$ satisfies
\begin{equation}
\begin{aligned}\label{gammat-2}
 Q= ~& 4\alpha H^{-\alpha-4}\cdot2|A|^2H^{-1}|\nabla H|^2-2\alpha H^{-\alpha-3}\Big((2\gamma_t+1)|\nabla H|^2-\frac{2}{H}\mathring{h}_{ij}\nabla^iH\nabla^jH\Big)\\
 &-2\alpha(\alpha+1)H^{-\alpha-4}(\mathring{h}_{ij}+\frac{H}{2}g_{ij})\nabla^iH\nabla^jH+2\alpha(\alpha-2)H^{-\alpha-5}|A|^2|\nabla H|^2 \\
  =~ & 2\alpha^2H^{-\alpha-3}|\nabla H|^2\gamma_t-\alpha H^{-\alpha-3}|\nabla H|^2  +2\alpha(1-\alpha)H^{-\alpha-4}\mathring{h}_{ij}\nabla^iH\nabla^jH\\
   \leq ~&2\alpha(\alpha+(1-\alpha)^2)H^{-\alpha-3}|\nabla H|^2\gamma_t-\frac {\alpha}2H^{-\alpha-3}|\nabla H|^2\\
   \leq ~&2\alpha H^{-\alpha-3}|\nabla H|^2(\gamma_t-\frac 14),
\end{aligned}
\end{equation}
where  $0<1-\alpha<1$ is used in the last inequality and the Cauchy-Schwartz inequality
$$(1-\alpha)H^{-1}\mathring{h}_{ij}\nabla^iH\nabla^jH\leq (1-\alpha)^2H^{-2}|\mathring{A}|^2|\nabla H|^2+\frac{1}{4}|\nabla H|^2$$
is used in the first inequality.

As we assumed in \eqref{Ms-condi} that the initial surface satisfies $\gamma_0< 1/4$, by \eqref{gammat-1} and \eqref{gammat-2} and noting that $\alpha\leq 1$, we have that $\gamma_t<1/4$ for each $t>0$. Moreover, using $\gamma_t<1/4$, Lemma \ref{lem-ap-H} and applying the maximum principle to \eqref{gammat-1} and \eqref{gammat-2} again, we have that
\begin{equation}\label{s7-gam-decay1}
  \gamma_t~\leq ~(\max_{M_0}\gamma_0) e^{-\delta_1 t},
\end{equation}
where $0<\delta_1= \frac{\alpha+1}2C_1^{1-\alpha}$ is a positive constant depending only on $\alpha,\epsilon_0$,and $\bar{r}_0-\underline{r}_0$, i.e., $\gamma_t\to 0$ exponentially as $t\to \infty$ in the rate $\delta_1$.
In the following, we improve the rate of the exponential decay in \eqref{s7-gam-decay1} step by step.

We define another function on $M_t$ by
\begin{equation}\label{s7-phi-def}
  \phi_t=e^{2^{(1-\alpha)}\cdot t}(H^2-4).
\end{equation}
 By  using \eqref{flow-H^k-H}, we have that
\begin{equation}\label{s7-phi-evl}
\begin{aligned}
  \partial_t\phi_t =~&\alpha H^{-\alpha-1}\Delta \phi_t-\alpha(\alpha+2)H^{-\alpha-2}\nabla\phi_t\cdot\nabla H\\
  &\quad -2H^{1-\alpha}\left(|A|^2-2\right)e^{2^{(1-\alpha)}\cdot t}+2^{1-\alpha}\phi_t\\
  \leq ~&\alpha H^{-\alpha-1}\Delta \phi_t-\alpha(\alpha+2)H^{-\alpha-2}\nabla\phi_t\cdot\nabla H-\left(H^{1-\alpha}-2^{1-\alpha}\right)\phi_t\\
  \leq ~&\alpha H^{-\alpha-1}\Delta \phi_t-\alpha(\alpha+2)H^{-\alpha-2}\nabla\phi_t\cdot\nabla H,
\end{aligned}
\end{equation}
where we used the facts that $|A|^2\geq H^2/2$ and $H^{1-\alpha}-2^{1-\alpha}$ has the same sign with $\phi_t$. Applying the maximum principle to \eqref{s7-phi-evl}, we obtain that
\begin{equation}\label{s7-H-decay1}
  H^2\leq 4+(\max_{M_0}|H^2-4|)e^{-2^{(1-\alpha)}\cdot t}.
\end{equation}
Substituting \eqref{s7-gam-decay1} into \eqref{gammat-1}, using the estimate \eqref{gammat-2} and \eqref{s7-H-decay1}, we have that at the critical point of $\gamma_t$,
\begin{align}\label{gammat-3}
  \partial_t\gamma_t \leq~& \alpha H^{-\alpha-1}\Delta\gamma_t-\left(2-2(\alpha+1)(\max_{M_0}\gamma_0)e^{-\delta_1t}-\frac{1-\alpha}{4}(\max_{M_0}|H^2-4|)e^{-2^{(1-\alpha)}\cdot t}\right)H^{1-\alpha}\gamma_t.
\end{align}
Then the maximum principle implies that
\begin{align}\label{s7-gam-decay2}
  \gamma_t\leq~ & (\max_{M_0}\gamma_0)e^{-\delta_2t}\exp\left(\frac{2(\alpha+1)}{\delta_1}(\max_{M_0}\gamma_0)(\max H^{1-\alpha})+2^{\alpha-3}(1-\alpha)(\max_{M_0}|H^2-4|)(\max H^{1-\alpha})\right)\nonumber\\
  =:~&C_3(\max_{M_0}\gamma_0)e^{-\delta_2t},
\end{align}
where $C_3=C_3(\alpha,\epsilon_0,\bar{r}_0-\underline{r}_0)$, and $\delta_2=2C_1^{1-\alpha}>\delta_1$ are both positive constants depending only on $\alpha,\epsilon_0$,and $\bar{r}_0-\underline{r}_0$.

We now improve the lower bound of $H$ in Lemma \ref{lem-ap-H}.  From the evolution equation \eqref{flow-H^k-H}, we have
\begin{align*}
  \partial_tH^{1-\alpha}=& \alpha H^{-\alpha-1}\Delta H^{1-\alpha}-\alpha(1-\alpha)H^{-2\alpha-2}|\nabla H|^2\\
  &-(1-\alpha)H^{-2\alpha}|\mathring{A}|^2-\frac{1-\alpha}2H^{-2\alpha}(H^2-4).
\end{align*}
We consider the minimum point $p$ of $H$ on $M_t$. Our aim is to show that there exists a constant $C_4(\alpha,\epsilon_0,\bar{r}_0-\underline{r}_0)$ such that $H^{1-\alpha}\geq 2^{1-\alpha}-C_4 e^{-C_1^{(1-\alpha)}\cdot t}$ with $C_1$ the lower bound given in Lemma \ref{lem-ap-H}, then we can improve the estimate in \eqref{gammat-3}. If $H(p,t)\geq 2$, then we are done. If $H(p,t)<2$, then using the elementary inequality
\begin{equation*}
  (1+x)^a\geq 1+ax,\quad \forall ~x>0,~a>1,
\end{equation*}
we have
\begin{align*}
  4-H^2 =& H^2\left(1+H^{\alpha-1}(2^{1-\alpha}-H^{1-\alpha})\right)^{\frac 2{1-\alpha}}-H^2 \\
   \geq & \frac 2{1-\alpha}H^{1+\alpha}(2^{1-\alpha}-H^{1-\alpha}),
\end{align*}
since $2^{1-\alpha}-H^{1-\alpha}>0$ and $\frac 2{1-\alpha}>1$. So at the point $p$, we have
\begin{align*}
  \partial_t(2^{1-\alpha}-H^{1-\alpha}) \leq & -H^{1-\alpha}(2^{1-\alpha}-H^{1-\alpha})+(1-\alpha)H^{-2\alpha}|\mathring{A}|^2\\
  \leq &-C_1^{1-\alpha}(2^{1-\alpha}-H^{1-\alpha})+(1-\alpha)(4+\max_{M_0}|H^2-4|)C_1^{-2\alpha}C_3(\max_{M_0}\gamma_0)e^{-\delta_2t},
\end{align*}
where we used the estimates \eqref{s7-H-decay1} and \eqref{s7-gam-decay2}. Then the maximum principle implies that
\begin{align}\label{s7-H-decay2}
 2^{1-\alpha}-H^{1-\alpha} \leq & e^{-C_1^{(1-\alpha)}\cdot t}\left(\max_{M_0}|2^{1-\alpha}-H^{1-\alpha}|+(1-\alpha)(4+\max_{M_0}|H^2-4|)C_1^{-\alpha-1}C_3\max_{M_0}\gamma_0\right)\nonumber\\
 =:&C_4e^{-C_1^{(1-\alpha)}\cdot t},
\end{align}
where $C_4=C_4(\alpha,\epsilon_0,\bar{r}_0-\underline{r}_0)$.

Applying the estimate \eqref{s7-H-decay2} in \eqref{gammat-3}, we have that at the critical point of $\gamma_t$,
\begin{align}\label{gammat-4}
  \partial_t\gamma_t \leq~& \alpha H^{-\alpha-1}\Delta\gamma_t-\left(2^{2-\alpha}-C_5(e^{-\delta_1t}+e^{-2^{(1-\alpha)}\cdot t}+e^{-C_1^{(1-\alpha)}\cdot t})\right)\gamma_t,
\end{align}
for some constant $C_5=C_5(\alpha,\epsilon_0,\bar{r}_0-\underline{r}_0)$. By applying the maximum principle again, we obtain that
\begin{align}\label{s7-gam-decay3}
  \gamma_t \leq~& \max_{M_0}\gamma_0e^{-2^{(2-\alpha)}\cdot t}\exp(C_5(\frac 1{\delta_1}+2^{\alpha-1}+C_1^{\alpha-1}))\nonumber \\
  =:~& C_6(\max_{M_0}\gamma_0)e^{-2^{(2-\alpha)}\cdot t}
\end{align}
with $C_6=C_6(\alpha,\epsilon_0,\bar{r}_0-\underline{r}_0)$. Since $|\mathring{A}|^2=\gamma_tH^2$, the estimate \eqref{s7-As-decay} follows from \eqref{s7-gam-decay3}, \eqref{s7-H-decay1}, \eqref{Ms-condi} and the estimate \eqref{neves-est1}.

In the following, we prove \eqref{s7-H-decay0}. Note that from \eqref{s7-H-decay1}, \eqref{neves-est1} and \eqref{Ms-condi}, we immediately get that
\begin{equation*}
   |M_0|(H^2(x,t)-4) \leq~ |M_0|(\max_{M_0}|H^2-4|)e^{-2^{(1-\alpha)}\cdot t}~\leq~ Ce^{-2^{(1-\alpha)}\cdot t}
\end{equation*}
with $C$ depending only on $E$. In order to prove \eqref{s7-H-decay0}, it remains to estimate the lower bound of $|M_0|(H^2(x,t)-4)$. We prove this by two steps.

From the definition of $C_4$ given in \eqref{s7-H-decay2} and the estimate given in \eqref{neves-est1},
 we can choose $s_0$ large enough such that $0<C_4\leq 2^{1-\alpha}-1$, then \eqref{s7-H-decay2} yields that
 \begin{equation*}
 H^{1-\alpha}\geq 2^{1-\alpha}-C_4e^{-C_1^{(1-\alpha)}\cdot t}\geq 2^{1-\alpha}-C_4\geq 2^{1-\alpha}-(2^{1-\alpha}-1)=1,
 \end{equation*}
 from which we defer that $H\geq 1$. This means that we can take $C_1=1$ as the lower bound of $H$. By taking $C_1=1$, choosing $s_0$ large enough and repeating  the same procedure as that to obtain \eqref{s7-H-decay2}, we can get
 \begin{equation}\label{H-improve}
 2^{1-\alpha}-H^{1-\alpha}\leq e^{-t}.
 \end{equation}
Let $\eta_t=e^t(H^2-4)$, we have
 \begin{equation*}
\begin{aligned}
  \partial_t\eta_t =~&\alpha H^{-\alpha-1}\Delta \eta_t-\alpha(\alpha+2)H^{-\alpha-2}\nabla\eta_t\cdot\nabla H\\
&\quad +\left(1-H^{1-\alpha}\right)\eta_t-2H^{1-\alpha}e^{t}|\mathring{A}|^2.
\end{aligned}
 \end{equation*}
 We consider the minimum point $p$ of $\eta_t$. If $\eta_t(p)\geq 0$, i.e., $H^2-4\geq0$, then we are done. If $\eta_t(p)<0$, then $(1-H^{1-\alpha})\eta_t(p)\geq 0$, so at $p$, using the estimate \eqref{s7-As-decay} and \eqref{s7-H-decay1}, we have
 \begin{equation*}
\begin{aligned}
  \partial_t\eta_t&\geq-2H^{1-\alpha}e^{t}|\mathring{A}|^2\geq-C_7|M_0|^{-2}e^{-(2^{2-\alpha}-1)t},
\end{aligned}
\end{equation*}
with $C_7$ depending only on $\alpha,\epsilon_0$, $\bar{r}_0-\underline{r}_0$ and $E$. Then applying the maximum principle, we obtain that
\begin{equation*}
\eta_t\geq\min_{M_0}\eta_t-\frac{C_7}{2^{2-\alpha}-1}|M_0|^{-2}\geq-\max_{M_0}|H^2-4|-\frac{C_7}{2^{2-\alpha}-1}|M_0|^{-2}.
\end{equation*}
We can choose $s_0$ large enough such that $|M_0|>1$, then from the estimates given in \eqref{neves-est1} and \eqref{Ms-condi},  we get
\begin{equation}\label{H-lower1}
\eta_t\geq-C_8(\alpha,\epsilon_0,\bar{r}_0-\underline{r}_0,E)|M_0|^{-1}.
\end{equation}

 Now we improve the estimate in \eqref{H-lower1}. For the function $\phi_t$ defined by \eqref{s7-phi-def}, we have
\begin{equation*}
\begin{aligned}
  \partial_t\phi_t
= ~&\alpha H^{-\alpha-1}\Delta \phi_t-\alpha(\alpha+2)H^{-\alpha-2}\nabla\phi_t\cdot\nabla H\\
&\quad +\left(2^{1-\alpha}-H^{1-\alpha}\right)\phi_t-2H^{1-\alpha}e^{2^{(1-\alpha)}\cdot t}|\mathring{A}|^2.
\end{aligned}
\end{equation*}
 We consider the minimum point $p$ of $\phi_t$. If $\phi_t(p)\geq 0$, i.e., $H\geq2$, then we are done. If $\phi_t(p)<0$, then at $p$, using \eqref{s7-As-decay}, \eqref{H-improve} and \eqref{H-lower1}, we have
\begin{equation}\label{s7-phi-evl-3}
\begin{aligned}
  \partial_t\phi_t&\geq~\left(2^{1-\alpha}-H^{1-\alpha}\right)\phi_t-2H^{1-\alpha}e^{2^{(1-\alpha)}\cdot t}|\mathring{A}|^2\\
  &=\left(2^{1-\alpha}-H^{1-\alpha}\right)e^{(2^{1-\alpha}-1)t}\eta_t-2H^{1-\alpha}e^{2^{(1-\alpha)}\cdot t}|\mathring{A}|^2\\
  &\geq -C_8 e^{-(2-2^{1-\alpha})t}|M_0|^{-1}-2^{2-\alpha}C_2e^{-2^{(1-\alpha)}\cdot t}|M_0|^{-2}.
\end{aligned}
\end{equation}
Then applying the maximum principle, we obtain that
\begin{equation*}
\phi_t\geq\min_{M_0}\phi_t-C|M_0|^{-1}-C|M_0|^{-2}\geq-\max_{M_0}|H^2-4|-C|M_0|^{-1}-C|M_0|^{-2},
\end{equation*}
for some $C$ depending only  on $\alpha,\epsilon_0$, $\bar{r}_0-\underline{r}_0$ and $E$.
From the estimates given in \eqref{neves-est1} and \eqref{Ms-condi},  we get
\begin{equation}\label{H-lower2}
\phi_t\geq-C(\alpha,\epsilon_0,\bar{r}_0-\underline{r}_0,E)|M_0|^{-1}.
\end{equation}
Then \eqref{s7-H-decay0} follows immediately.
\endproof

\begin{lem}\label{lem6.9}
There exist constants $s_0=s_0(\alpha,\epsilon_0,\bar{r}_0-\underline{r}_0, E)$ and $C_9=C_9(\alpha,\epsilon_0,\bar{r}_0-\underline{r}_0, E)$ such that for any $s\geq s_0$,
\begin{equation}\label{lem-s7-nA}
  |M_0|^3|\nabla A|^2(x,t)\leq C_9e^{-3\cdot 2^{(1-\alpha)}\cdot t}
\end{equation}
on $M_t$ along the flow \eqref{flow-H^k}.
\end{lem}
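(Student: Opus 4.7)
The plan is to adapt the bootstrap strategy from the proof of Lemma \ref{lem6.8} to the third-order quantity $|\nabla A|^2$, following the scheme of Neves \cite{Neves-2010} but carefully tracking the non-scale-invariant rates when $\alpha<1$. First I would compute the evolution equation of $|\nabla A|^2$ from \eqref{flow-H^k-g}, \eqref{flow-H^k-A0}, and the standard Simons-type commutation identities in $\mathbb{H}^3$, obtaining a schematic form
\begin{equation*}
  \partial_t |\nabla A|^2 = \alpha H^{-\alpha-1}\Delta |\nabla A|^2 - 2\alpha H^{-\alpha-1}|\nabla^2 A|^2 - \mu(\alpha) H^{1-\alpha}|\nabla A|^2 + \mathcal{R},
\end{equation*}
where $\mu(\alpha)>0$ is an explicit coefficient whose value at the limit configuration $H=2,\ \mathring A=0$ equals $3$, and $\mathcal{R}$ collects lower-order reaction terms (cubic in $\nabla A$, mixed in $A, \nabla A, \nabla H$, and contributions from the constant sectional curvature of $\mathbb{H}^3$). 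Evaluated at $H\approx 2$ this produces the target rate $\mu(\alpha)\cdot 2^{1-\alpha}=3\cdot 2^{1-\alpha}$.

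Next I would split $\mathcal{R}$ into a harmless part with exponentially small coefficients and a dangerous part controlled through the decay estimates already established in Lemma \ref{lem6.8}:
\begin{equation*}
  |\mathring A|^2\leq C_2|M_0|^{-2}e^{-2^{2-\alpha}t},\qquad |H^2-4|\leq C_2|M_0|^{-1}e^{-2^{1-\alpha}t},\qquad H\geq 1,
\end{equation*}
the last inequality being available from the improved lower bound on $H$ derived in the proof of Lemma \ref{lem6.8}. By Cauchy--Schwarz every term involving $\mathring A$ or $H-2$ can be absorbed into $\varepsilon|\nabla A|^2$ plus an exponentially decaying remainder of rate at least $2^{1-\alpha}$. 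Choosing $\varepsilon$ small so that $\mu(\alpha)H^{1-\alpha}-\varepsilon$ stays above $3\cdot 2^{1-\alpha}-\delta$ for large $t$, and iterating through a finite bootstrap exactly as in the proof of \eqref{s7-H-decay0} --- first obtaining a crude rate from the uniform bounds on $H$ and $|A|$, then upgrading the rate by feeding back the sharper information on $|\mathring A|^2$ and $(H-2)$ --- the maximum principle applied to $e^{3\cdot 2^{1-\alpha}t}|\nabla A|^2$ yields the desired inequality, with the initial control $|M_0|^3|\nabla A|^2(\cdot,0)\leq C_0^3$ coming from the Neves-type estimate \eqref{neves-est1}.

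The main obstacle will be the algebraic bookkeeping in the evolution of $|\nabla A|^2$: terms of the schematic form $A\cdot\nabla A\cdot\nabla H$ and cubic terms in $\nabla A$ arising from the Simons-type identities produce cross terms that a naive maximum principle cannot absorb, because they are of the same order as the good reaction term. Following Neves \cite{Neves-2010}, the remedy is to work instead with a combined quantity such as $|\nabla A|^2+K_1|\mathring A|^4+K_2|\mathring A|^2(H-2)^2$ with $K_1,K_2$ large, whose evolution cancels the offending cross terms at the cost of introducing new zeroth-order reaction terms that are already exponentially small by Lemma \ref{lem6.8}. A secondary difficulty is to keep every constant independent of the parameter $s$: this forces the bootstrap to rely only on the $s$-independent bounds from Lemma \ref{lem-ap-H}, the pinching estimate, and the uniform initial estimate \eqref{neves-est1}, and to track the $|M_0|^{-1}$ factors through every inequality so that the final estimate has the correct scaling $|M_0|^3|\nabla A|^2\leq C_9 e^{-3\cdot 2^{1-\alpha}t}$.
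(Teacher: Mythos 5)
Your outline matches the paper's in its broad structure: compute the evolution of the third--order quantity, identify the sharp zeroth--order coefficient $3\cdot 2^{1-\alpha}$ at the limit configuration $H=2$, $\mathring A=0$, and use the decay from Lemma \ref{lem6.8} together with $H\geq 1$ and the initial bound \eqref{neves-est1} to make every remaining reaction coefficient exponentially small. (The paper works with $|\nabla\mathring A|^2$ rather than $|\nabla A|^2$, using $|\nabla A|^2\leq 3|\nabla\mathring A|^2$, but that is a cosmetic difference.) The divergence is in how you treat the one term that is genuinely dangerous. After Cauchy--Schwarz absorbs $C|\nabla^2 A||\nabla A|^2$ into the good term $-2\alpha H^{-\alpha-1}|\nabla^2 A|^2$, what is left is a quartic term $+C|\nabla A|^4$ with an $O(1)$, non-decaying coefficient; the cross terms $A\cdot\nabla A\cdot\nabla H$ you single out in fact all carry factors of $|\mathring A|$ or $|H^2-4|$ and are already exponentially small, so they are not the obstruction.

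Your proposed remedy for the obstruction --- the combined quantity $|\nabla A|^2+K_1|\mathring A|^4+K_2|\mathring A|^2(H-2)^2$ --- does not work as stated: the favourable gradient terms generated by $|\mathring A|^4$ and $|\mathring A|^2(H-2)^2$ come with prefactors $|\mathring A|^2$ resp.\ $(H-2)^2$, which decay exponentially and admit no pointwise lower bound, so they cannot absorb $C|\nabla A|^4=C|\nabla A|^2\cdot|\nabla A|^2$. The paper's device is different in an essential way: it sets $\psi_t=\log|\nabla\mathring A|^2+K|\mathring A|^2$, so that the logarithm converts the quartic term into $+C|\nabla\mathring A|^2$, which is then dominated by the Bochner term $-2K\alpha H^{-\alpha-1}|\nabla\mathring A|^2$ coming from the \emph{first} power of $|\mathring A|^2$ (whose coefficient is $O(1)$, not exponentially small); the extra gradient term $|\nabla\log|\nabla\mathring A|^2|^2=K^2|\nabla|\mathring A|^2|^2$ at the critical point is controlled by $4K^2|\mathring A|^2|\nabla\mathring A|^2$ and is harmless by \eqref{s7-As-decay}. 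That said, the other mechanism you invoke --- a bootstrap exploiting the smallness $|\nabla A|^2(\cdot,0)\leq C_0|M_0|^{-3}$ for $s_0$ large, via the ODE comparison $y'\leq -cy+Cy^2$ with $y(0)<c/(2C)$ and $c\to 3\cdot 2^{1-\alpha}$ --- does close the argument with all constants independent of $s$, and would be a legitimate alternative to the paper's logarithm trick. So the proposal is salvageable, but only after discarding the combined-quantity cancellation and promoting the initial-data-smallness bootstrap to the central step.
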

\proof
First, recall that in $2$-dimensional case, we have (cf. \cite[\S 2]{huisken-1984})
\begin{equation*}
  |\nabla H|^2\leq \frac 43|\nabla A|^2,
\end{equation*}
which implies  that
\begin{equation}\label{AAknot}
  |\nabla A|^2\leq 3|\nabla\mathring{A}|^2,\quad\text{and}\quad |\nabla H|^2\leq 4|\nabla\mathring{A}|^2.
\end{equation}
Therefore, in order to estimate $|\nabla A|^2$, it is equivalent to estimating $|\nabla\mathring{A}|^2$.

By combining \eqref{flow-H^k-g}, \eqref{flow-H^k-H} and \eqref{flow-H^k-A0}, we obtain that
\begin{align}\label{s7-nA-pf1}
  \partial_t\mathring{h}_{ij} =& \partial_th_{ij}-\frac 12\partial_tHg_{ij}-\frac H2\partial_tg_{ij} \nonumber\\
  = &\alpha H^{-\alpha-1}\Delta\mathring{h}_{ij}-\alpha(\alpha+1)H^{-\alpha-2}\nabla_iH\nabla_jH\nonumber\\
  &+\frac 12\alpha(\alpha+1)H^{-\alpha-2}|\nabla H|^2g_{ij}+\alpha H^{-\alpha-1}|\mathring{A}|^2\mathring{h}_{ij} +\frac{\alpha+1}2H^{-\alpha}|\mathring{A}|^2g_{ij}\nonumber\\
  &-\frac{\alpha} 2H^{1-\alpha}\mathring{h}_{ij}+(1-\alpha)H^{-\alpha}\mathring{h}_i^k\mathring{h}_{kj}+2\alpha H^{-\alpha-1}\mathring{h}_{ij}.
\end{align}

If $Rm$ denotes the curvature tensor of $M_t$, then for any tensor $S$ on $M_t$, the Ricci identity implies that
\begin{equation}\label{s7-nA-pf2}
  \Delta \nabla S=\nabla\Delta S+Rm*\nabla S+\nabla Rm*S,
\end{equation}
where $*$ means the contraction of two tensors using the metric $g(t)$ of $M_t$. We also have the formula of commuting $\nabla$ with $\partial_t$,
\begin{equation}\label{s7-na-pf3}
  \partial_t\nabla S-\nabla\partial_tS=S*\nabla\partial_tg(t).
\end{equation}
By Gauss equations, the curvature of $M_t$ satisfies
\begin{align*}
  R_{ijkl}=&-\left(g_{ik}g_{jl}-g_{il}g_{jk}\right)+h_{ik}h_{jl}-h_{il}h_{jk}\nonumber\\
  =&(\frac{H^2}4-1)\left(g_{ik}g_{jl}-g_{il}g_{jk}\right)+\mathring{h}_{ik}\mathring{h}_{jl}-\mathring{h}_{il}\mathring{h}_{jk}\nonumber\\
  &+\frac H2\left(\mathring{h}_{ik}g_{jl}+\mathring{h}_{jl}g_{ik}-\mathring{h}_{il}g_{jk}-\mathring{h}_{jk}g_{il}\right)
\end{align*}
in local coordinates.  Then
\begin{align}
  |Rm|\leq & C_{10}\left(|H^2-4|+|\mathring{A}|^2+H|\mathring{A}|\right)
\end{align}
and
\begin{align}
  |\nabla Rm|\leq  & C_{10}\left(H|\nabla H|+|\mathring{A}||\nabla\mathring{A}|+|\mathring{A}||\nabla H|+H|\nabla\mathring{A}|\right)\nonumber\\
  \leq & 3C_{10}\left(|\mathring{A}||\nabla\mathring{A}|+H|\nabla\mathring{A}|\right)\label{na-Rm2}
\end{align}
for a universal constant $C_{10}$ depending only on the dimension $n=2$.

Note that in the proof of Lemma \ref{lem6.8}, we obtain that $H$ and $|A|^2$ are bounded from below and above by positive constants which do not depend on $s$,
using \eqref{flow-H^k-g}, \eqref{AAknot}--\eqref{na-Rm2}, we compute and estimate the evolution equation of $|\nabla\mathring{A}|^2$.
\begin{align}\label{s7-na-pf4}
  \partial_t|\nabla\mathring{A}|^2 =& 2g(\partial_t\nabla\mathring{A},\nabla\mathring{A})-2\partial_tg_{rs}g^{ir}g^{ms}g^{jn}g^{kp}\mathring{h}_{ij,k}\mathring{h}_{mn,p}
  -\partial_tg_{rs}g^{kr}g^{ps}g^{im}g^{jn}\mathring{h}_{ij,k}\mathring{h}_{mn,p} \nonumber\\
  =&2g(\partial_t\nabla\mathring{A},\nabla\mathring{A})-4H^{-\alpha}(\mathring{h}_{rs}+\frac{H}{2}g_{rs})g^{ir}g^{ms}g^{jn}g^{kp}\mathring{h}_{ij,k}\mathring{h}_{mn,p} \nonumber\displaybreak[0]\\
 &~~~ -2H^{-\alpha}(\mathring{h}_{rs}+\frac{H}{2}g_{rs})g^{kr}g^{ps}g^{im}g^{jn}\mathring{h}_{ij,k}\mathring{h}_{mn,p} \nonumber\\
  \leq&2g(\partial_t\nabla \mathring{A},\nabla\mathring{A})+C|\nabla\mathring{A}|^2|\mathring{A}|-3H^{1-\alpha}|\nabla\mathring{A}|^2\nonumber\displaybreak[0]\\
   =&2g(\nabla\partial_t\mathring{A},\nabla\mathring{A})+\mathring{A}*\nabla(H^{-\alpha}A)*\nabla\mathring{A}+C|\nabla\mathring{A}|^2|\mathring{A}|-3H^{1-\alpha}|\nabla\mathring{A}|^2\nonumber\\
   \leq & 2\alpha H^{-\alpha-1}g(\nabla\Delta\mathring{A},\nabla\mathring{A})+\left(-3H^{1-\alpha}-\alpha H^{1-\alpha}+4\alpha H^{-\alpha-1}\right)|\nabla\mathring{A}|^2\nonumber\displaybreak[0]\\
   &+C\left(|\nabla^2\mathring{A}|+|\nabla^2H|\right)|\nabla\mathring{A}|^2+C|\nabla\mathring{A}|^4+C|\nabla\mathring{A}|^2\left(|\mathring{A}|^3+|\mathring{A}|^2+|\mathring{A}|\right)\nonumber\\
   \leq & \alpha H^{-\alpha-1}\Delta|\nabla\mathring{A}|^2-2\alpha H^{-\alpha-1}|\nabla^2\mathring{A}|^2\nonumber\displaybreak[0]\\
   &+\left(-3H^{1-\alpha}-\alpha H^{1-\alpha}+4\alpha H^{-\alpha-1}\right)|\nabla\mathring{A}|^2+C|\nabla\mathring{A}|^4\\
   &+C\left(|\nabla^2\mathring{A}|+|\nabla^2H|\right)|\nabla\mathring{A}|^2+C|\nabla\mathring{A}|^2\left(|H^2-4|+|\mathring{A}|^3+|\mathring{A}|^2+|\mathring{A}|\right),\nonumber
\end{align}
where the constant $C$ depends only on $\alpha,\epsilon_0$ and $\bar{r_0}-\underline{r}_0$. We claim that
\begin{equation}\label{na2-H0}
  |\nabla^2H|\leq C_{11}|\nabla^2\mathring{A}|
\end{equation}
for a universal constant $C_{11}$ depending on the dimension $n=2$. In fact, let $\{e_1,e_2\}$ be a local orthonormal frame of $M_t$. Then
\begin{align}\label{na2-H1}
  |\nabla^2H|^2= &\sum_{i,j=1}^2 (h_{11,ij}+h_{22,ij})^2 \leq  \sum_{i,j=1}^2\left(2h_{11,ij}^2+2h_{22,ij}^2\right).
\end{align}
Using the Codazzi equation $h_{kl,m}=h_{km,l}$ and the fact $\nabla^2h_{kl}=\nabla^2\mathring{h}_{kl}$ for any pair $k\neq l$, we have
\begin{align}
  h_{22,1j}&=h_{21,2j}=\mathring{h}_{21,2j},\quad \forall~j=1,2,\\
  h_{11,1j} =& \mathring{h}_{11,1j}+\frac 12H_{,1j} = \mathring{h}_{11,1j}+h_{22,1j}-\mathring{h}_{22,1j}\nonumber\\
  =& \mathring{h}_{11,1j}+\mathring{h}_{21,2j}-\mathring{h}_{22,1j},\quad \forall~j=1,2.
\end{align}
Similarly, we have
\begin{align}
  h_{22,2j}=& \mathring{h}_{22,2j}+\mathring{h}_{12,1j}-\mathring{h}_{11,2j},\quad \forall~j=1,2,\\
  h_{11,2j}=&\mathring{h}_{12,1j}\quad \forall~j=1,2.\label{na2-H3}
\end{align}
Then \eqref{na2-H0} follows by combining \eqref{na2-H1} -- \eqref{na2-H3}. In view of the estimates \eqref{s7-As-decay} and \eqref{s7-H-decay0}, \eqref{na2-H0} and applying the Cauchy-Schwartz inequality
$$
2\alpha H^{-\alpha-1}|\nabla^2\mathring{A}|^2+\frac{C^2\cdot C_{11}^2}{8\alpha} |\nabla\mathring{A}|^4H^{\alpha+1}\geq C\cdot C_{11}|\nabla^2\mathring{A}||\nabla\mathring{A}|^2
$$
to kill out the second order term $|\nabla^2\mathring{A}|^2$ in \eqref{s7-na-pf4}, we have
\begin{align}\label{s7-na-pf5}
  \partial_t|\nabla\mathring{A}|^2 \leq & \alpha H^{-\alpha-1}\Delta|\nabla\mathring{A}|^2
   +\left(-3\cdot 2^{1-\alpha}+C|M_0|^{-1}e^{-2^{(1-\alpha)}\cdot t}\right)|\nabla\mathring{A}|^2+C|\nabla\mathring{A}|^4,
\end{align}
where the constant $C$ depends only on $\alpha,\epsilon_0$, $\bar{r_0}-\underline{r}_0$ and $E$.

However, we can not obtain the estimate \eqref{lem-s7-nA} by applying the maximum principle to \eqref{s7-na-pf5}, as the coefficient of $|\nabla\mathring{A}|^4$ is a positive constant. To kill out this bad term, we will combine the following evolution equation of $|\mathring{A}|^2$, which follows from \eqref{s7-nA-pf1}, \eqref{flow-H^k-g} and the estimates \eqref{s7-As-decay}, \eqref{s7-H-decay0}.
\begin{align}
  \partial_t|\mathring{A}|^2  =&\alpha H^{-\alpha-1}\Delta|\mathring{A}|^2-2\alpha H^{-\alpha-1}|\nabla \mathring{A}|^2-2\alpha(\alpha+1)H^{-\alpha-2}\mathring{h}_{ij}\nabla^iH\nabla^jH\nonumber\\
   &+2\alpha H^{-\alpha-1}|\mathring{A}|^4-(\alpha+2)H^{1-\alpha}|\mathring{A}|^2+4\alpha H^{-\alpha-1}|\mathring{A}|^2\nonumber\\
   \leq &\alpha H^{-\alpha-1}\Delta|\mathring{A}|^2+(-2^{-\alpha}\cdot\alpha+C|M_0|^{-1}e^{-2^{(1-\alpha)}\cdot t})|\nabla\mathring{A}|^2\nonumber\\
   &+(-2^{2-\alpha}+C|M_0|^{-1}e^{-2^{(1-\alpha)}\cdot t})|\mathring{A}|^2,
\end{align}
where the constant $C$ depends only on $\alpha,\epsilon_0$, $\bar{r_0}-\underline{r}_0$ and $E$.
Define
\begin{equation}
  \psi_t=\log |\nabla\mathring{A}|^2+K|\mathring{A}|^2
\end{equation}
for a constant $K$ to be determined later. Then
\begin{align}
  \partial_t\psi_t \leq &\alpha H^{-\alpha-1}\Delta \psi_t+\alpha H^{-\alpha-1}|\nabla\log|\nabla\mathring{A}|^2|^2-3\cdot 2^{1-\alpha}+C|M_0|^{-1}e^{-2^{(1-\alpha)}\cdot t}\nonumber\\
  &+\left(C+K(-2^{-\alpha}\alpha+C|M_0|^{-1}e^{-2^{(1-\alpha)}\cdot t})\right)|\nabla\mathring{A}|^2\nonumber\\
  &+K(-2^{2-\alpha}+C|M_0|^{-1}e^{-2^{(1-\alpha)}\cdot t})|\mathring{A}|^2,
\end{align}
where the constant $C$ depends only on $\alpha,\epsilon_0$, $\bar{r_0}-\underline{r}_0$ and $E$.
At the critical point $p$ of $\psi_t$, we have
\begin{equation*}
    \nabla \log |\nabla\mathring{A}|^2=-K\nabla |\mathring{A}|^2,
\end{equation*}
hence at this $p$, we have
\begin{equation*}
\begin{aligned}
   | \nabla \log |\nabla\mathring{A}|^2|^2&=K^2|\nabla |\mathring{A}|^2|^2=K^2|2|\mathring{A}|\nabla |\mathring{A}||^2\\
   &=4K^2|\mathring{A}|^2|\nabla|\mathring{A}||^2\leq4K^2|\mathring{A}|^2|\nabla\mathring{A}|^2\\
   &\leq4K^2C_2|M_0|^{-2}e^{-2^{(2-\alpha)}\cdot t}|\nabla\mathring{A}|^2,
   \end{aligned}
\end{equation*}
where we used the inequality $|\nabla|\mathring{A}||^2\leq |\nabla\mathring{A}|^2$ in the first inequality and \eqref{s7-As-decay} in the last inequality.
Therefore, at $p$,
\begin{align}\label{s7-na-pf7}
  \partial_t\psi_t \leq &\alpha H^{-\alpha-1}\Delta \psi_t-3\cdot 2^{1-\alpha}+C|M_0|^{-1}e^{-2^{(1-\alpha)}\cdot t}\nonumber\\
  &+\left(C+K(-2^{-\alpha}\cdot\alpha+C|M_0|^{-1}e^{-2^{(1-\alpha)}\cdot t})+4K^2C_2|M_0|^{-2}e^{-2^{(2-\alpha)}\cdot t}\right)|\nabla\mathring{A}|^2\nonumber\\
  &+K C|M_0|^{-1}e^{-2^{(1-\alpha)}\cdot t} C_2 |M_0|^{-2}e^{-2^{(2-\alpha)}\cdot t}
\end{align}
for some  constant $C=C(\alpha,\epsilon_0,\bar{r_0}-\underline{r}_0,E)$.
We can choose $K={C}{\alpha}^{-1}2^{2+\alpha}$ and choose $s_0$ sufficiently large such that $|M_0|^{-1}<\min\{\frac{\alpha}{C\cdot2^{1+\alpha}},\frac{\alpha}{\sqrt{2C\cdot C_2}\cdot 2^{3+\alpha}}\}.$
Then we have
\begin{equation*}
C+K(-2^{-\alpha}\cdot\alpha+C|M_0|^{-1}e^{-2^{(1-\alpha)}\cdot t})+4K^2C_2|M_0|^{-2}e^{-2^{(2-\alpha)}\cdot t}~<~0
\end{equation*}
and
\begin{equation*}
C|M_0|^{-1}<\alpha \cdot2^{-1-\alpha},~K C|M_0|^{-1}C_2 |M_0|^{-2}<\alpha^2\cdot2^{-6-2\alpha}.
\end{equation*}
Therefore, at the critical point of $\psi_t$, we have
\begin{align}\label{s7-na-pf8}
  \partial_t\psi_t \leq &\alpha H^{-\alpha-1}\Delta \psi_t-3\cdot 2^{1-\alpha}+\alpha \cdot2^{-1-\alpha}e^{-2^{(1-\alpha)}\cdot t} +\alpha^2\cdot2^{-6-2\alpha}e^{-3\cdot2^{(1-\alpha)}\cdot t} .
\end{align}
Applying maximum principal to \eqref{s7-na-pf8}, we conclude that
\begin{equation}\label{s7-na-pf9}
  \psi_t\leq \max_{M_0}\psi_0-3\cdot 2^{1-\alpha}\cdot t+C,
\end{equation}
for some constant $C=C(\alpha,\epsilon_0,\bar{r_0}-\underline{r}_0, E)$. Then \eqref{lem-s7-nA} follows immediately from \eqref{s7-na-pf9} and the estimate \eqref{neves-est1}.
\endproof


\end{document}